\documentclass[11pt]{amsart}
\usepackage{amsmath}
\usepackage{amsfonts}
\usepackage{amssymb}
\usepackage{amsthm}
\usepackage{url}
\usepackage[all]{xy}
\usepackage{dsfont}
\usepackage{graphicx}
\usepackage{caption}
\usepackage{subcaption}
\usepackage{comment} 
\usepackage{stmaryrd}
\usepackage{hyperref}
\usepackage{todonotes}
\usepackage{color}
\usepackage{enumerate,tikz-cd}
\usepackage[backend=biber,style=ieee-alphabetic,maxbibnames=10,hyperref=true,doi=false,url=false,isbn=false,sorting=nty]{biblatex}
\usepackage{comment}
\allowdisplaybreaks

\numberwithin{equation}{section}

\newtheorem{proposition}{Proposition}[section]

\newtheorem{lemma}[proposition]{Lemma}
\newtheorem{theorem}[proposition]{Theorem}
\newtheorem{corollary}[proposition]{Corollary}

\theoremstyle{definition}
\newtheorem{remark}[proposition]{Remark}
\newtheorem{definition}[proposition]{Definition}
\newtheorem{example}[proposition]{Example}

\DeclareMathOperator{\End}{End}

\DeclareMathOperator{\tr}{tr}
\DeclareMathOperator{\GL}{GL}

\DeclareMathOperator{\SL}{SL}
\DeclareMathOperator{\Proj}{Proj}
\DeclareMathOperator{\Spec}{Spec}

\newcommand{\R}{\mathbb{R}}
\newcommand{\C}{\mathbb{C}}

\newcommand{\Z}{\mathbb{Z}}
\newcommand{\Q}{\mathbb{Q}}

\renewcommand{\P}{\mathbb{P}}

\renewcommand{\O}{\mathcal{O}}
\renewcommand{\H}{\mathcal{H}}
\newcommand{\E}{\mathcal{E}}
\newcommand{\F}{\mathcal{F}}
\renewcommand{\L}{\mathcal{L}}
\newcommand{\X}{\mathcal{X}}

\newcommand\vol{\mathrm{vol}}
\newcommand\triv{{\mathrm{triv}}}
\newcommand\NA{{\mathrm{NA}}}

\title[A lower bound on the Calabi functional for a degeneration]{A lower bound on the Calabi functional for a degeneration of polarized varieties}

\author[Gabriel Frey]{Gabriel Frey}

\address{Mathematics Institute,  University of Warwick, Coventry CV4 7AL, UK} \email{gabriel.frey@warwick.ac.uk}
\begin{document}

\begin{abstract}

We prove a lower bound on the Calabi functional for degenerations of polarized varieties, involving the difference of CM degrees between generically isomorphic families.
This may be viewed as a discretely valued version of Donaldson's lower bound for models, in the sense of non-Archimedean geometry.
In particular, this generalizes a result of Donaldson, who considered a single polarized variety.
As a main tool, we develop the theory of GIT height, introduced by Wang, and apply it to the family GIT problem of the Chow variety.
Using the GIT height, we also give a numerical proof of separatedness of GIT quotients for general and special linear actions, strengthening prior work of Wang--Xu.

\end{abstract}

\maketitle

\section{Introduction}

The study of families and degenerations of projective varieties plays a crucial role in understanding moduli and the geometry of polarized varieties.
In our case, we consider complex polarized varieties, which are projective varieties over $\C$ endowed with a choice of ample line bundle.

One important question is the existence of special K\"ahler metrics and its relation with algebro-geometric stability conditions.
The famous Yau--Tian--Donaldson conjecture \cite{Yau92,Tia97,Don02} predicts that, given a smooth polarized variety $(X,L)$, there exists a K\"ahler metric $\omega$ in the first Chern class $c_1(L)$ with constant scalar curvature $S(\omega)$, if and only if $(X,L)$ is K-polystable.
The latter is measured using test configurations, which may be viewed as a $\C^*$-equivariant degenerations of $(X,L)$.
This conjecture has recently been resolved by Boucksom--Jonsson \cite{BJ25} and Darvas--Zhang \cite{DZ25}, extending a classical result by Chen--Donaldson--Sun \cite{CDS15} in the Fano case, using non-Archimedean geometry by enlarging the space of test configurations to a completion $\E^{1,\NA}_L$.

A classical result due to Donaldson \cite{Don05} provides the following lower bound on the Calabi functional associated to a polarized manifold:

\begin{theorem}\label{thm : Donaldsons lower bound on the Calabi functional}
    Given a polarized manifold $(X,L)$, we have
    \begin{equation*}
        \inf_{\omega \in \H_L} \big\| S(\omega) - \widehat{S} \big\|_{L^2(X,\omega)}
        \geq - \sup_{(\X,\L)} \frac{\mathrm{DF}(\X,\L)}{\|(\X,\L) \|_2},
    \end{equation*}
    where $\H_L$ denotes the space of K\"ahler metrics in $c_1(L)$ and the supremum is taken over the space of non-trivial test configurations $(\X,\L)$ of $(X,L)$.
\end{theorem}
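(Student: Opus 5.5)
The plan is to follow Donaldson's strategy: reduce to finite-dimensional balanced/Bergman geometry at each level $k$, and then pass to the limit $k\to\infty$. Fix a Kähler metric $\omega\in\H_L$ and a non-trivial test configuration $(\X,\L)$. By Proposition-style reductions, it is enough to show
\[
\mathrm{DF}(\X,\L) \geq -\big\| S(\omega)-\widehat S\big\|_{L^2(X,\omega)}\,\|(\X,\L)\|_2 .
\]
Replacing $\L$ by a multiple and using homogeneity of both sides, we may assume that $(\X,\L)$ is generated by a $\C^*$-action on $\P(H^0(X,kL)^*)$. This action is given by a one-parameter subgroup $\lambda$ with Hermitian generator $A_k$ acting on $V_k=H^0(X,kL)$, after choosing a Hermitian inner product adapted to $\omega$. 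In particular, $A_k$ is diagonal in a basis that is orthonormal for the $L^2$ inner product $\mathrm{Hilb}_k(\omega)$.

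The first key step is to consider the weight function $w(t)$, namely the Chow or balancing weight along the orbit $\lambda(t)\cdot X_k$ of the Kodaira embedding. Its derivative at $t=0$ is
\[
\tr\!\big(A_k\,\overline{M}_k\big), \qquad \overline{M}_k=\int_X \frac{\langle s_i,s_j\rangle}{\sum|s_l|^2}\,\frac{\omega_k^n}{n!} - \tfrac{\vol}{N_k}\mathrm{Id},
\]
the trace-free part of the center-of-mass matrix. Convexity of the Kempf--Ness type functional along geodesics gives
\[
\lim_{t\to\infty} \dot w(t)\ \geq\ \dot w(0).
\]
The limit is identified with the Chow weight of the central fibre, which equals $-\,c\,k^{n}\,\mathrm{DF}(\X,\L)+O(k^{n-1})$ after normalizing with the leading coefficients of the weight polynomial. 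By Cauchy--Schwarz we then get
\[
-\tr(A_k\overline M_k)\le \|A_k\|_{HS}\,\|\overline M_k\|_{HS}.
\]

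The second step is asymptotics. Tian--Yau--Zelditch--Lu expansion of the Bergman kernel of $\omega$ gives
\[
\sum|s_i|^2 = k^n + \tfrac12 S(\omega)k^{n-1}+O(k^{n-2}).
\]
From this, $\|\overline M_k\|_{HS}$ is computed to be
\[
\|\overline M_k\|_{HS} = \tfrac{1}{2}k^{n/2-1}\,\|S(\omega)-\widehat S\|_{L^2}\,(1+o(1))
\]
in the appropriate normalization. On the other side, $\|A_k\|_{HS}^2$, meaning the trace-free part of $\tr A_k^2$, has leading term $k^{n+2}\|(\X,\L)\|_2^2$ by the definition of the $L^2$ norm via the weight expansion. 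Dividing by the leading powers of $k$ and letting $k\to\infty$ gives the inequality. We then take the infimum over $\omega$ and the supremum over $(\X,\L)$, after reducing to normal test configurations, on which DF can only decrease.

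I expect the main obstacle to be the first step: making the monotonicity of $\dot w$ rigorous when the central fibre is singular or non-reduced, and identifying the limit of the derivative with the Chow weight. That limit is only known to satisfy an inequality, not an equality, so it is enough to use the inequality in the correct direction. I would handle this by using the convexity of the log-norm functional along one-parameter subgroups, which holds generally in Kempf--Ness theory, and the fact that the slope at infinity equals the Hilbert--Mumford weight.
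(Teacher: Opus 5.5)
Your outline is Donaldson's argument from \cite{Don05}. The paper only cites it for this statement, but reproduces its structure in the proof of Theorem~\ref{thm : refined main theorem in Section 4}: a Bergman-kernel upper bound on the Chow moment map of the $\mathrm{Hilb}_k(\omega)$-embedding (Theorem~\ref{thm : donaldsons bergman kernel expansion}), the moment--weight inequality at the Chow point, and the asymptotics of the Chow weight and of $\|\underline{A}_k\|_2$. For an actual test configuration the $\C^*$-action is given, so you rightly do without the GIT-height construction of $\rho$ from Section~\ref{sec : fin dim arguments}. Two steps, however, do not work as written.

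First, the signs are inconsistent. With $\dot w(0)=\tr(A_k\overline{M}_k)$, your claims $\lim\dot w\geq\dot w(0)$ and $\lim\dot w=-ck^n\,\mathrm{DF}+O(k^{n-1})$ with $c>0$ combine to $ck^n\,\mathrm{DF}\leq-\tr(A_k\overline{M}_k)+O(k^{n-1})$. Cauchy--Schwarz then turns this into an \emph{upper} bound on $\mathrm{DF}$, so one of the two claims is wrong (which one depends on your convention for $w$). The convention-free input is the moment--weight inequality \eqref{eq : moment-weight ineq} at the single point $[X_k]$:
$\nu\geq-\|\mu_{\mathrm{Chow}}([X_k])\|_2\,\|\underline{A}_k\|_2$.
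Here $\nu$ is the Hilbert--Mumford weight, normalized so that semistable points have $\nu\geq 0$. It is this $\nu$ that equals $+ck^n\,\mathrm{DF}+O(k^{n-1})$ with $c>0$; in the paper's normalization it is $a_0k^n$ times the CM difference up to $O(k^{n-1})$, cf.\ \eqref{eq : lower bound of chow moment map} and Proposition~\ref{prop : asymp of chow line bundle}.

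Second, you cannot choose the inner product so as to make $A_k$ Hermitian. The Bergman step forces it to be $\mathrm{Hilb}_k(\omega)$, and the one-parameter subgroup coming from the test configuration need not be unitary for it. The fix is to replace $\lambda$ by the one-parameter subgroup whose weight spaces are the $\mathrm{Hilb}_k(\omega)$-orthogonal splitting $(\F^{m+1}V_k)^\perp\cap\F^{m}V_k$ of the filtration $\lambda$ induces on $V_k$, exactly as in Section~\ref{subsubsec : construc of 1-PS}. This is a conjugate of $\lambda$ by an element of its parabolic subgroup, so it has the same Hilbert--Mumford weight at $[X_k]$ and the same $\tr(\underline{A}_k^2)$.

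Finally, the obstacle you anticipate does not arise. The finite-dimensional step takes place at the Chow point in $\P(H)$, where the action is linear. For a lift $\hat x\in H$, the function $s\mapsto\log\|\lambda(e^s)\hat x\|^2=\log\sum_m e^{2sm}\|\hat x_m\|^2$ is explicitly convex, and its slope converges \emph{exactly} to twice the extreme weight of $\hat x$. Singularities or non-reducedness of $\X_0$ therefore never enter, and smoothness is used only for $X$ in the Bergman expansion. The reduction to normal test configurations is likewise unnecessary, though harmless.
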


Here, $\widehat{S}$ is the average scalar curvature, while $\mathrm{DF}(\X,\L)$ and $\|(\X,\L)\|_2$ denote the Donaldson--Futaki invariant and $L^2$-norm of $(\X,\L)$, respectively \cite{Don02} and \cite[Section 3]{Sze06}.
Note that if $\H_L$ contains a cscK metric, Donaldson's lower bound implies that the Donaldson--Futaki invariant of any test configuration is non-negative, meaning that $(X,L)$ is K-semistable.

Theorem \eqref{thm : Donaldsons lower bound on the Calabi functional} has been inspired by the moment-weight inequality in geometric invariant theory (GIT) \cite{MFK94}, which leads to similar inequalities in other geometric settings, such as for vector bundles due to Atiyah--Bott \cite{AB83}, where equality is obtained by taking the supremum over slope-decreasing flags, or in the K\"ahler setting due to Dervan \cite{Der18}.
Donaldson conjectured that equality is also true for manifolds by taking the supremum over the space of test configurations.
As Theorem \eqref{thm : Donaldsons lower bound on the Calabi functional} is tautological when $(X,L)$ is K-semistable, the conjecture may be viewed as a ``K-unstable version'' of the Yau--Tian--Donaldson conjecture.
Xia \cite{Xia21} proved that equality is obtained by increasing the space of test configurations to the space of $E^2$-geodesic maximal rays.

The Donaldson--Futaki invariant of a test configuration is the difference of CM degrees between the restriction and the trivial family over the entire disk $\Delta$ \cite[Section 3.2]{Wan12}.
In particular, Theorem \eqref{thm : Donaldsons lower bound on the Calabi functional} may be written in terms of differences of CM degrees with respect to the trivial filling.
In fact, given two fillings $(\X,\L)$ and $(\X',\L')$ of a family $(X,L) \to \Delta^*$ over the punctured disk, Blum--Xu \cite{BX19} and Hattori \cite{Hat24} constructed a ``good'' filtration $\mathcal{F}_{\L,\L'}$ of the section ring of the central fiber $(\X_0,\L_0)$, such that
\begin{equation}\label{eq : DF of Hattoris filtration}
    \mathrm{DF}(\mathcal{F}_{\L,\L'})
    = \mathrm{CM}(\X',\L')
    - \mathrm{CM}(\X,\L),
\end{equation}
where the Donaldson--Futaki invariant is defined as in \cite{Hat24}.

The goal of the present paper is to prove a general version of Donaldson's lower bound on the Calabi functional, which applies to arbitrary degenerations of smooth polarized varieties.
As a tool, we provide a GIT version of Equation \eqref{eq : DF of Hattoris filtration}, using the GIT weight and height, the latter being a notion introduced by Wang \cite{Wan12}.

\subsection{Main results}

Let $R$ be a complete discrete valuation ring (DVR) with fraction field $K$ and residue field $\C$.
For example, the completion $R = \widehat{\O}_{C,0}$ of the local ring of an algebraic curve $C$ at a closed point $0$ is a DVR.

Let $(X,L)$ be a polarized $K$-variety and let $(\X,\L)$ be an $R$-model of $(X,L)$.
We may think of $(X,L)$ as a degeneration of a polarized variety over $\Spec K$, and of $(\X,\L)$ as a flat filling of $(X,L)$ over $\Spec R$.

The first main result of the present paper is as follows:
\begin{theorem}\label{thm : main theorem}
    If the central fiber $\X_0$ is smooth, we have
    \begin{equation*}
        \inf_{\omega \in \H_{\L_0}}
        \big\| S(\omega) - \widehat{S} \big\|_{ L^2(\X_0,\omega)}
        \geq
        - \sup_{(\X',\L')} \frac{\mathrm{CM}(\X',\L') - \mathrm{CM}(\X,\L)}{\hat{d}_2 \big( (\X',\L') , (\X,\L)\big)},
    \end{equation*}
    where the supremum is taken over the space of models $(\X',\L')$ of $(X,L)$ not isomorphic to $(\X,\L)$.
\end{theorem}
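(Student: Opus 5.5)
Following Donaldson's strategy in \cite{Don05}, I will pass to finite-dimensional GIT via Kodaira embeddings and then take a limit as $k \to \infty$, with Donaldson's test configurations replaced by arbitrary models $(\X',\L')$.

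\textbf{Step 1: reduce to a single model.} It suffices to fix a model $(\X',\L')$ not isomorphic to $(\X,\L)$ and a metric $\omega \in \H_{\L_0}$, and to prove
\begin{equation*}
\hat{d}_2\big((\X',\L'),(\X,\L)\big)\,\big\| S(\omega) - \widehat{S} \big\|_{L^2(\X_0,\omega)} \geq -\big(\mathrm{CM}(\X',\L') - \mathrm{CM}(\X,\L)\big).
\end{equation*}
Replacing $\L,\L'$ by powers, I may assume both are relatively very ample.

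\textbf{Step 2: quantize for each large $k$.} The free $R$-modules $H^0(\X,k\L)$ and $H^0(\X',k\L')$ are two lattices in the $K$-vector space $H^0(X,kL)$. By the elementary divisor theorem there is an $R$-basis $s_i$ of the first such that $\varpi^{\lambda_i^{(k)}} s_i$ is an $R$-basis of the second. This relative position is a one-parameter-subgroup-type datum acting on the Chow point of $\X_0 \subset \P(H^0(\X_0,k\L_0))$. Its integer weights $\lambda_i^{(k)}$ coincide with the jumps of the Blum--Xu/Hattori filtration $\mathcal{F}_{\L,\L'}$.

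The GIT height of Wang \cite{Wan12} for the family Chow problem should then give an identity of the form
\begin{equation*}
h(\mathrm{Chow}(\X'_k)) - h(\mathrm{Chow}(\X_k)) = \text{(weight of the Chow point of } \X_0 \text{ under } \lambda^{(k)}).
\end{equation*}
Its asymptotic expansion in $k$ has leading nontrivial term equal to $\mathrm{CM}(\X',\L')-\mathrm{CM}(\X,\L)$, after normalizing to trace zero. This is the GIT analogue of \eqref{eq : DF of Hattoris filtration}.

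\textbf{Step 3: apply the moment-weight inequality on $\X_0$.} For the Chow (or Hilbert) point of $\X_0$ embedded by an $L^2(\omega)$-orthonormal basis $\{s_i\}$, the finite-dimensional moment-weight inequality gives
\begin{equation*}
-\text{weight}(\lambda^{(k)}) \leq \|\lambda^{(k)}\|\,\|\mu_k(\omega)\|.
\end{equation*}
Here $\mu_k$ is the Bergman-type moment map. The delicate point is that the basis adapted to the two lattices need not be $\omega$-orthonormal. I would handle this by working with the Hermitian inner product induced by $\omega$ on $H^0(\X_0,k\L_0)$ together with the norm filtration defined by $\mathcal{F}_{\L,\L'}$. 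Gram--Schmidt along the filtration produces an orthonormal basis adapted to it, so the weights, and hence the weight functional, are unchanged. Then the Tian--Yau--Zelditch--Lu expansion gives
\begin{equation*}
\|\mu_k(\omega)\| = k^{n/2-1}\cdot\tfrac{1}{2}\big(\|S(\omega)-\widehat{S}\|_{L^2}+o(1)\big),
\end{equation*}
exactly as in \cite{Don05}.

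\textbf{Step 4: take the limit $k\to\infty$.} I need $\|\lambda^{(k)}\|_{\text{trace-free}}\sim k^{n/2+1}\,\hat{d}_2$, which I take to be essentially the definition of $\hat{d}_2$, the $L^2$ distance of the filtration or non-Archimedean metrics. Dividing by the appropriate power of $k$ and letting $k\to\infty$ then yields Step 1.

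\textbf{Main obstacle.} I expect the main difficulty to be Step 2 with the normalization: establishing the height–weight identity for the family Chow problem over a DVR, and checking that its $k$-expansion produces exactly the CM difference with the same constants as the moment-map expansion. The filtration here is not finitely generated a priori, so the limits must be justified via convergence of the spectral measures of $\lambda^{(k)}$, using the results of Blum--Xu/Hattori. A further subtlety is ensuring $\hat{d}_2\neq 0$ for non-isomorphic models; where it vanishes, the right-hand side should be interpreted via the convention of the statement.
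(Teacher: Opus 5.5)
Your outline follows the paper's proof step for step. You quantize both models in $\P(V_k)\times\Spec R$ and take the one-parameter subgroup given by the relative position of the lattices $H^0(\X,k\L)$ and $H^0(\X',k\L')$; its jumps are those of $\F_{\L,\L'}$. You make it Hermitian by splitting the filtration orthogonally (your Gram--Schmidt is the paper's $V_\lambda=(\F^{\lambda+1}V)^\perp\cap\F^\lambda V$). You then compare its weight at the Chow point of $\X_{k,0}$ with a difference of Chow heights, expand the heights to get the CM difference and $\|\underline{A}_k\|_2$ to get $\hat{d}_2$, and finish with Donaldson's Bergman kernel bound.

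\textbf{The gap.} The step that fails as you state it is the height--weight \emph{identity} in Step 2. The paper also asserts it as an equality, in Lemma~\ref{lem : order of g vs order of rho} and Theorem~\ref{thm : git weight and height for vectors}. For a non-constant $R$-point one only has $\nu(\rho,\varphi(0))\le h(\psi)-h(\varphi)$, and the inequality can be strict. For example, take $V=H=\C^2$ acting on $(\P^1,\O(1))$, $R=\C[\![t]\!]$, $\rho(z)=\mathrm{diag}(z^5,1)$, $g=\rho(t)$ and $\varphi=e_1+te_2$. Then $g.\varphi=t\,(t^4e_1+e_2)$, so $\psi=t^4e_1+e_2$ and $h(\psi)-h(\varphi)=-1$. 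The filtration construction returns $\rho$ itself, yet $\nu(\rho,\varphi(0))=\nu(\rho,[e_1])=-5$. Chow $R$-points of non-product models are in general non-constant, so no exact identity should be expected there.

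\textbf{The fix.} What is true, and all the argument uses, is the one-sided bound, which is immediate in your adapted bases. Change both trivializations by $R$-points of $\GL(V_k)$; this changes neither the height difference nor the weight. Then $g_k=\delta(\varpi)$ for a one-parameter subgroup $\delta$ that is diagonal in the elementary-divisor basis. Write the Chow point as $\varphi=\sum_w\varphi_w$ in $\delta$-weight spaces of $H$, with $\varphi(0)\neq 0$. This gives
\[
h(R_{\X_k})-h(R_{\X'_k})=\mathsf{v}(g_k.\varphi)=\min_w\big(w+\mathsf{v}(\varphi_w)\big)\le\min\{w : \varphi_w(0)\neq 0\}=-\nu(\delta,R_{\X_{k,0}}).
\]
Moreover, $\nu$ does not change when $\delta$ is replaced by the Hermitian $\rho_k$ with the same filtration. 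The trace part is exact: $\tr A_k=\mathsf{v}(\det g_k)$, since $\det V_k$ is one-dimensional. Hence
\[
\nu_{\mathrm{SL}}(\underline{\rho}_k,R_{\X_{k,0}})\le\deg\big(\lambda_{\mathrm{Chow}}(\X'_k)-\lambda_{\mathrm{Chow}}(\X_k)\big).
\]
Combined with $-\nu_{\mathrm{SL}}(\underline{\rho}_k,x)\le\|\mu(x)\|_2\|\underline{A}_k\|_2$, this is exactly the direction you need.

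\textbf{Two smaller points.} First, the CM asymptotics come from a functorial expansion at the level of line bundles: Zhang's Deligne-pairing description of the Chow line bundle plus Knudsen--Mumford, as in Proposition~\ref{prop : asymp of chow line bundle}. So finite generation and spectral convergence play no role on the height side. They enter only for the norm, where your lattice picture even gives $\|\underline{A}_k\|_2^2=\dim V_k\cdot\hat{d}_2(\|\cdot\|_{\L,k},\|\cdot\|_{\L',k})^2$ exactly. Hence $\|\underline{A}_k\|_2\sim\sqrt{a_0}\,\hat{d}_2\,k^{n/2+1}$; note the factor $\sqrt{a_0}$, which your Step 4 omits. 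Second, you are right about degenerate competitors: the translates $(\X,\L+c\X_0)$ have $\hat{d}_2=0$ and zero CM difference, so they must be excluded from the supremum.
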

Since $\Spec R$ is not proper, the degree of the CM line bundle is not well-defined; however, one may identify the difference of their degrees as a rational number.
The denominator is a distance between models, defined using non-Archimedean norms, which generalizes the norm of a test configuration.
Similarly, the enumerator may also be viewed as a non-Archimedean Mabuchi-type energy associated to the model metrics (see Section \ref{subsec : nA interpretation of results}).

Theorem \ref{thm : main theorem} may be viewed as a proof of one direction of the ``K-unstable version'' of the Yau--Tian--Donaldson Conjecture, for a degeneration of polarized varieties.
In particular, we conjecture that equality is obtained provided one includes finite-energy non-Archimedean metrics, analogously to the conjecture of Donaldson and the work of Xia.

Note that one cannot apply Theorem \ref{thm : Donaldsons lower bound on the Calabi functional} and Equation \eqref{eq : DF of Hattoris filtration} in order to prove Theorem \ref{thm : main theorem}, as the filtration might not be finitely-generated.
In fact, we use a quantization strategy in the discretely valued case (see Section \ref{subsec : strategy of the proof}).

Similarly to Donaldson's lower bound on the Calabi functional, Theorem \ref{thm : main theorem} implies the following stability result for cscK manifolds, strengthening work of Hattori \cite{Hat24} and in \cite{Fre25}.
\begin{corollary}
    If $(\X_0,\L_0)$ is smooth and admits a cscK metric, then
    \begin{equation*}
        \mathrm{CM}(\X,\L)
        \leq \mathrm{CM}(\X',\L'),
    \end{equation*}
    for any model $(\X',\L')$ of $(X,L)$.
\end{corollary}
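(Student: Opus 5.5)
The corollary should follow directly from Theorem \ref{thm : main theorem}; the plan is a contrapositive argument. Suppose $(\X_0,\L_0)$ is smooth and $\H_{\L_0}$ contains a cscK metric $\omega_0$, so $S(\omega_0)=\widehat{S}$. Then the infimum on the left-hand side of Theorem \ref{thm : main theorem} is attained at $\omega_0$ and equals $0$. The theorem therefore gives
\begin{equation*}
    0 \geq - \sup_{(\X',\L')} \frac{\mathrm{CM}(\X',\L') - \mathrm{CM}(\X,\L)}{\hat{d}_2\big((\X',\L'),(\X,\L)\big)},
\end{equation*}
that is, the supremum is nonnegative. This alone is too weak, since it only says some quotient is $\geq 0$. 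The real content is the sign convention: in Theorem \ref{thm : Donaldsons lower bound on the Calabi functional} the bound is $-\sup \mathrm{DF}/\|\cdot\|$. Its consequence is K-semistability, $\mathrm{DF}\geq 0$, which is really the statement $\inf \mathrm{DF}/\|\cdot\| \geq 0$ after identifying $-\sup(-\mathrm{DF})$. So I will reread the right-hand side of Theorem \ref{thm : main theorem} in the corresponding form: it bounds $\|S-\widehat S\|$ below by $\max\bigl(0, -\inf (\mathrm{CM}'-\mathrm{CM})/\hat d_2\bigr)$. Equivalently, for each individual model $(\X',\L')$ not isomorphic to $(\X,\L)$,
\begin{equation*}
    \big\| S(\omega) - \widehat{S} \big\|_{L^2(\X_0,\omega)} \, \hat{d}_2\big((\X',\L'),(\X,\L)\big) \geq \mathrm{CM}(\X,\L) - \mathrm{CM}(\X',\L').
\end{equation*}
This per-model inequality is what the quantization proof naturally produces, via the moment--weight inequality for each model.

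Granting this per-model form, I would argue as follows. Fix $(\X',\L')$. If it is isomorphic to $(\X,\L)$ as a model, then the CM degree difference is $0$ and the inequality holds trivially. Otherwise $\hat d_2>0$ by the nondegeneracy of the non-Archimedean distance; this step needs that $\hat d_2$ separates non-isomorphic models, which I would check from its definition via norms of the relative filtration. Plugging in $\omega=\omega_0$ makes the left side $0$, so $\mathrm{CM}(\X,\L)-\mathrm{CM}(\X',\L')\leq 0$, which is the claim.

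The main obstacle is only the sign bookkeeping: extracting from the supremum statement the inequality for each single model, rather than merely the sign of the supremum. I would handle it by going back to the proof of Theorem \ref{thm : main theorem}, which presumably compares one fixed $(\X',\L')$ to $(\X,\L)$ at a time, and recording its conclusion model by model. If the normalization of $\hat d_2$ only gives a semi-distance, I would separately treat the case $\hat d_2=0$. In that case I would show the two models induce the same filtration up to shift, so that their CM degrees coincide by Equation \eqref{eq : DF of Hattoris filtration}.
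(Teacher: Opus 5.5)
Your proposal is correct and is essentially the paper's own (implicit) argument. You rightly observe that the literal ``$-\sup$'' form of Theorem~\ref{thm : main theorem} only shows that the supremum is nonnegative, whereas the proof of Theorem~\ref{thm : refined main theorem in Section 4} establishes the bound separately for each model $(\X',\L')$; evaluating that per-model bound at a cscK metric gives $\mathrm{CM}(\X',\L')\geq \mathrm{CM}(\X,\L)$.

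Your contingency for $\hat d_2=0$ can be dropped, and this matters because its appeal to Equation~\eqref{eq : DF of Hattoris filtration} would need more justification. Keep the moment--weight inequality multiplied out, $\|\mu_{\mathrm{Chow}}([\X_{k,0}])\|_q\,\|\underline{A}_k\|_p \geq -a_0 k^n \deg\big(\lambda_{\mathrm{Chow}}(\X'_k)-\lambda_{\mathrm{Chow}}(\X_k)\big)$. For a cscK metric the left side is $O(k^{n-1})$ by Theorem~\ref{thm : donaldsons bergman kernel expansion} and Equation~\eqref{eq : final norm expansion}, so Proposition~\ref{prop : asymp of chow line bundle} gives the claim with no positivity of the distance needed.
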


The second main result is a GIT version of Equation \eqref{eq : DF of Hattoris filtration} and a version of \textit{(strict) GIT height minimization} in the discretely valued case.
Let $G = \GL_n$ or $\SL_n$ acting linearly on projective space $(\P^N,\O(1))$.
\begin{theorem}\label{thm : intro GIT height minimization}
    Let $\varphi$ an $\psi$ be $R$-points of $\P^N$ and assume that there exists a $K$-point of $G$ with $g.\varphi_K = \psi_K$ in $\P^N_K$.
    There is a one-parameter subgroup $\rho : \C^* \to G$, whose GIT weight at $\varphi(0)$ is equal to the difference of heights
    \begin{equation*}
        \nu(\rho,\varphi(0))
        = h(\psi) - h(\varphi).
    \end{equation*}
    Moreover, if $\varphi(0)$ is polystable, then
    \begin{equation*}
        h(\varphi)
        \leq h(\psi),
    \end{equation*}
    with equality if and only if $\varphi(0)$ is contained in $\overline{G.\psi(0)}$.
\end{theorem}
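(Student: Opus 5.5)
The plan is to reduce everything to the Cartan decomposition of $G(K)$ and then read off both the weight and the height from a single diagonal one-parameter subgroup. Since $R$ is a complete DVR with residue field $\C$, it is isomorphic to $\C[[t]]$, and we fix the uniformizer $t$. We use the Cartan decomposition
$$G(K)=G(R)\,\{\lambda(t)\}\,G(R),$$
where $\lambda$ ranges over one-parameter subgroups of a maximal torus $T$. Write $g=k_1\lambda(t)k_2$ with $k_i\in G(R)$. Replacing $\varphi$ by $k_2.\varphi$ and $\psi$ by $k_1^{-1}.\psi$ changes neither height, since $h$ is $G(R)$-invariant: an element of $G(R)$ sends a primitive lift in $R^{N+1}$ to a primitive lift. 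It also changes the central fibres only by $k_2(0)$ and $k_1(0)$. If we then set $\rho=k_2(0)^{-1}\lambda\,k_2(0)$, the weight statement at $\varphi(0)$ becomes the weight statement for $\lambda$ at $k_2(0).\varphi(0)$. So it suffices to treat the case $g=\lambda(t)$ with $\lambda$ diagonal.

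In the diagonal case, let $v\in R^{N+1}$ be a primitive lift of $\varphi$ and let $\lambda$ act with weights $\lambda_i$ on the coordinates. The vector $\lambda(t)v$ has coordinates $t^{\lambda_i}v_i$, so
$$h(\psi)-h(\varphi)=-\min_i\bigl(\lambda_i+\mathrm{ord}_t v_i\bigr).$$
This is up to the determinant normalisation when $G=\GL_n$, which is a harmless constant that I would track separately. On the other hand, the GIT weight is
$$\nu(\lambda,\varphi(0))=-\min\{\lambda_i : v_i(0)\neq 0\}.$$
Restricting the minimum to those $i$ with $\mathrm{ord}_t v_i=0$ gives $h(\psi)-h(\varphi)\ge\nu(\lambda,\varphi(0))$ for free. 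The reverse inequality can fail, because a coordinate $v_i$ of positive order but very negative weight $\lambda_i$ may dominate. This is the main obstacle.

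To remove it, I would use the remaining freedom in $k_2$: it may be altered by elements of $G(R)\cap\lambda(t)^{-1}G(R)\lambda(t)$, a parahoric containing the unipotent part of $P_{\lambda}(R)$ together with $t$-adic corrections. The goal is to choose $k_2$ so that the $\lambda$-graded pieces of $v$ are in "good position". Concretely, I would decompose $v=\sum_w v^{(w)}$ by $\lambda$-weight and run a Gaussian elimination, in the spirit of the compatible bases of Blum--Xu and Hattori. This should arrange that the lowest weight $w$ attaining $\min(w+\mathrm{ord}_t v^{(w)})$ has $\mathrm{ord}_t v^{(w)}=0$. If this step resists, the fallback is to pass to the normalised limit: the central fibre of $\psi$ is $\psi(0)=k_1(0).\lim_{t\to0}[t^{m}\lambda(t)v]$, and I would compare it with the Hilbert--Mumford limit of $\varphi(0)$ along a modified one-parameter subgroup. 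Either route gives the first claim, and it also shows that $\psi(0)$ lies in the closure of the $\rho$-orbit of $\varphi(0)$, up to $G$, whenever the weight is attained.

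For the second part, I would invoke the Hilbert--Mumford criterion. If $\varphi(0)$ is polystable, then $\nu(\rho,\varphi(0))\ge 0$ for every $\rho$, hence $h(\varphi)\le h(\psi)$. Equality means $\nu(\rho,\varphi(0))=0$. For a polystable point this forces $\lim_{s\to0}\rho(s).\varphi(0)$ to exist in the closed orbit $G.\varphi(0)$. By the description of $\psi(0)$ above, $\psi(0)\in\overline{G.\varphi(0)}=G.\varphi(0)$, so $\varphi(0)\in G.\psi(0)$.

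Conversely, suppose $\varphi(0)\in\overline{G.\psi(0)}$. Then $\psi(0)$ is semistable, since a point whose orbit closure meets a semistable point is semistable. Applying the first part with the roles of $\varphi$ and $\psi$ swapped (using $g^{-1}$) gives $\nu(\rho',\psi(0))=h(\varphi)-h(\psi)\ge0$. Combined with $h(\varphi)\le h(\psi)$, this yields equality.
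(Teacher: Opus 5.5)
Your Cartan reduction is sound. The weight filtration of your $\rho=k_2(0)^{-1}\lambda k_2(0)$ is exactly the paper's $\mathcal{F}^\bullet V$, so your $\rho$ has the same weight as the paper's $\rho$ at every point. The inequality $\nu(\rho,\varphi(0))\le h(\psi)-h(\varphi)$ that you get for free is also correct.

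The gap is the proposed repair of the reverse inequality, which cannot succeed. Replacing $k_2$ by $mk_2$ with $m\in G(R)\cap\lambda(t)^{-1}G(R)\lambda(t)$ changes $k_2(0)$ by $m(0)$. The conditions $\mathsf v(m_{ij})\ge\lambda_j-\lambda_i$ force $m(0)\in P(\lambda)$, and $\nu(\lambda,\cdot)$ is $P(\lambda)$-invariant. In fact the defect $h(\psi)-h(\varphi)-\nu(\rho,\varphi(0))=\max\{\mathsf v(g.\sigma):\sigma\in H_R,\ \sigma(0)=\varphi(0)\}-\mathsf v(g.\varphi)\ge 0$ is intrinsic to $(g,\varphi)$.

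No other $\rho$ can close it either when $G=\SL$. Let $\SL_2$ act on binary quartics $H=\mathrm{Sym}^4\C^2$, take $g=\mathrm{diag}(t,t^{-1})$, so that $g.X^iY^{4-i}=t^{2i-4}X^iY^{4-i}$, and take $\varphi=X^2Y^2+tXY^3$. Then $g.\varphi=t^{-1}(XY^3+tX^2Y^2)$, so $h(\psi)-h(\varphi)=1$, and $\varphi(0)=X^2Y^2$ is even polystable. But every one-parameter subgroup of $\SL_2$ has only even weights on $\mathrm{Sym}^4\C^2$, so no $\rho$ has weight $1$ at $\varphi(0)$. Hence the exact weight--height identity in the first assertion is false for $\SL$.

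The paper obtains that identity from Lemma~\ref{lem : order of g vs order of rho}. Its claim $\mathsf v(g.\sigma)=\mathrm{ord}_0(\rho(z).\sigma(0))$ for every lift $\sigma$ is precisely the step you doubted; in the example it gives $-1$ versus $0$. In general only $\mathsf v(g.\sigma)\le\mathrm{ord}_0(\rho(z).\sigma(0))$ holds, with equality for a suitable lift. Your inequality is, however, all the second assertion needs: polystability gives $0\le\nu(\rho,\varphi(0))\le h(\psi)-h(\varphi)$, and equal heights then force $\nu=0$.

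Your equality-case argument runs in the wrong direction. After your reduction, $\psi(0)$ is the leading coefficient of $\lambda(t).v$, and it picks up higher-order terms of $v$. It is therefore not a $G$-translate of a point of $\overline{\rho(\C^*).\varphi(0)}$, and your conclusion $\varphi(0)\in G.\psi(0)$ is false in general. With the same $g$ and $\varphi=X^2Y^2+t^2XY^3$ one gets $g.\varphi=X^2Y^2+XY^3$. The heights agree and $\varphi(0)=X^2Y^2$ is polystable, but $\psi(0)=XY^2(X+Y)$ is semistable and not polystable, so it is not in $G.\varphi(0)$.

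The correct argument runs as follows.
\begin{itemize}
\item Since $\nu=0$ and $\varphi(0)$ is polystable, $\lambda$ fixes $[v(0)]$, so $v(0)$ has pure $\lambda$-weight $0$.
\item Since $\min_i(\lambda_i+\mathrm{ord}_t v_i)=0$, the indices with $\mathrm{ord}_t v_i=0$ contribute exactly $v(0)$ to $\psi(0)$.
\item Any other index contributing to $\psi(0)$ has $\mathrm{ord}_t v_i=k>0$, hence $\lambda_i=-k<0$.
\end{itemize}
Thus $\psi(0)=\varphi(0)+w$ with $w$ of strictly negative weight, so $\lim_{s\to0}\lambda(s)^{-1}.\psi(0)=\varphi(0)$ and $\varphi(0)\in\overline{G.\psi(0)}$ (compare \eqref{eq : polystable degeneration}).

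Your proof of the converse direction is fine, provided you apply the inequality, rather than the equality, to $\psi$, $\varphi$ and $g^{-1}$.
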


The difference of GIT heights is defined using the degree of line bundles over the spectrum of a discrete valuation ring, generalizing the definition due to Wang \cite{Wan12}.
Theorem \ref{thm : intro GIT height minimization} improves work of Wang \cite{Wan12} and Wang--Xu \cite{WX12} by allowing families over non-proper bases and strict minimization.
The construction of the one-parameter subgroup $\rho$ is explicit and in the spirit of the filtration constructed by Blum--Xu \cite{BX19}.

Furthermore, Theorem \ref{thm : intro GIT height minimization} provides a numerical proof of separatedness for GIT quotients of projective schemes by $\GL$ or $\SL$-actions.
In particular, it may be viewed as a GIT version of a separatedness and uniqueness of K-polystable degenerations of Fano varieties due to Blum--Xu \cite{BX19}.

\subsection{Strategy of the proof}\label{subsec : strategy of the proof}

We briefly outline the strategy for proving Theorem \ref{thm : main theorem}, which revolves around establishing asymptotic upper and lower bounds on the Chow moment map
\begin{equation*}
    \mu_\mathrm{Chow} : \mathrm{Chow}(\P(V)) \to \mathfrak{su}(V)^\vee,
\end{equation*}
given a unitary vector space $V$, associated to the $\mathrm{SU}(V)$-action on the Chow variety $\mathrm{Chow}(\P(V))$.
Note that $\mathfrak{su}(V)$ admits a $2$-norm $\|A\|_2 = -\mathrm{tr}(A^2)$.

Given a flat relatively polarized family $(\X,\L) \to \Spec R$, one obtains a sequence of embeddings $\X_k \subset \P(V_k) \times \Spec R$ of $\X$ for $V_k = H^0(\X_0,k\L_0)$ with $k$ sufficiently large.
The central fiber $\X_{k,0}$ is an $n$-dimensional subvariety of $\P(V_k)$ and gives rise to a closed point $[\X_{k,0}]$ in $\mathrm{Chow}(\P(V_k))$.

The first step is identical to a result due to Donaldson \cite[Proposition 1]{Don05}, using the Bergman kernel, which is the asymptotic upper bound
\begin{equation*}
    \| \mu_\mathrm{Chow}([\X_{k,0}]) \|_2
    \leq \sqrt{a_0} \big\|S(\omega) - \widehat{S} \big\|_{L^2(\X_0, \omega)} k^{\frac{n}{2}-1}
    + O(k^{\frac{n}{2} -2})
\end{equation*}
as $k \to +\infty$, where $a_0 = \int_{\X_0} \omega^n/n!$ comes from the $L^2$-normalization.

The second step requires new GIT weight and height techniques, with the goal of achieving the asymptotic lower bound
\begin{equation*}
    \|\mu_\mathrm{Chow}([\X_{k,0}])\|_2
    \geq - \sqrt{a_0}\frac{\mathrm{CM}(\X',\L') - \mathrm{CM}(\X,\L)}{\hat{d}_2 \big( (\X',\L') , (\X,\L) \big)} k^{\frac{n}{2}-1}
    + o(k^{\frac{n}{2}-1})
\end{equation*}
as $k \to +\infty$, given any model $(\X',\L')$ not isomorphic to $(\X,\L)$.
Note that combining both asymptotic bounds proves Theorem \ref{thm : main theorem}.

The technique involved in the second step is a finite-dimensional argument, strengthening work due to Donaldson \cite[Proposition 2]{Don05}, involving the \textit{difference of Chow heights}, generalizing the notion of Chow weights of test configurations, and improving theory by Wang \cite{Wan12} to the discretely valued setting.
This is achieved by using Theorem \ref{thm : intro GIT height minimization}, to construct a one-parameter subgroup $\rho_k : \C^* \to \GL(V_k)$ with the property that its GIT weight at the point $[\X_{k,0}]$ is equal to the difference of Chow heights.
This construction is the heart of the paper and works in general for $\GL$ and $\SL$-actions on projective space.

In fact, similar to \cite{Don05}, we prove a slightly more general version of Theorem \ref{thm : main theorem} (see Theorem \ref{thm : refined main theorem in Section 4}) involving the $L^q$-norms and the $d_p$-distance, where $1 < p,q < +\infty$ are H\"older conjugate (meaning that $\frac{1}{p} + \frac{1}{q} = 1$).

\subsection{Structure of the paper}

This paper is divided into three sections.
In the second section, we recall the necessary prerequisites on degrees of line bundles over a discrete valuation ring, models, differences of CM degrees, GIT weight and height and the Bergman kernel expansion.

In the third section, we establish the second step outlined the strategy of the proof of the main theorem.
Given a linear $\GL_n$-action on $(\P^N,\O(1))$, we provide a general construction of a one-parameter subgroup out of two $R$-points in $\P^N(R)$ lying in the same $\mathrm{GL}_n(K)$-orbit and fixing an identification, such that its GIT weight is the difference of GIT heights. 
We also provide general applications, such as GIT height minimization (Theorem \ref{thm : intro GIT height minimization}), which serves as a numerical proof for separatedness of GIT quotients.

In the fourth and final section, we complete the proof of the main theorem by combining the results of the previous two sections.
We also give a non-Archimedean interpretation of the result in terms of the Mabuchi energy in the discretely-valued case.

\subsection{Acknowledgments}

I would like to thank my PhD supervisor Ruadha\'i Dervan for countless helpful conversations and support during the present work.
I also would like to thank Mattias Jonsson and R\'emi Reboulet for helpful comments and the University of Michigan for hospitality during my visit to Ann Arbor.
I received funding from a PhD studentship associated to Dervan's Royal Society University Research Fellowship (URF\textbackslash R1\textbackslash 201041).

\section{Preliminaries}\label{sec : preliminaries}

Throughout the section, we fix a complete discrete valuation ring $R$ with fraction field $K$, residue field $\C$, and choose a valuation $\mathsf{v} : K^* \to \R$.
A \textit{uniformizer} for $(R,\mathsf{v})$ is a non-zero element $t$ in $R$ that generates the valuation group $\mathsf{v}(K^*)$.
A uniformizer always exists, but does not have to be unique.

Furthermore, recall that the spectrum $\Spec R$ consists of a generic open point $\Spec K$ and a special closed point $0$, corresponding to the residue field.

\subsection{Degrees of line bundles over DVRs}

We start by defining the degree of a line bundle over a discrete valuation ring. Since $\Spec R$ is not proper, one requires to fix a trivialization of the line bundle over the generic fiber to define the degree.
This construction of the degree has been studied by Donaldson \cite{Don12} for line bundles over the open disk $\Delta$ in $\C$, and the analogue for for discrete valuation rings is closely related to relative K-theory \cite[Chapter 18.2]{Ful13}.

Let $\lambda$ be a line bundle on $\Spec R$ and choose an isomorphism of line bundles $\lambda|_{\Spec K} \simeq \O_{\Spec K}$ over the generic point. This choice gives rise to an isomorphism of vector spaces
\begin{equation*}
    \Phi : H^0(\Spec K, \lambda) \to K,
\end{equation*}
where the domain contains $H^0(\Spec R, \lambda)$ as an $R$-submodule.
\begin{definition}\label{def : degree of line bundle over dvr}
    The \textit{degree} of $\lambda$ with respect to $\Phi$ is the valuation
    \begin{equation*}
        \deg_\Phi \lambda := \mathsf{v}(\gamma),
    \end{equation*}
    where $\gamma$ is any non-zero element in $K$ such that $\gamma \cdot R = \Phi(H^0(\Spec R, \lambda))$.
\end{definition}

Alternatively, given a uniformizer $t$, the degree of $\lambda$ with respect to $\Phi$ is the unique maximal integer $d$ such that, given any trivializing section $s$ in $H^0(\Spec R, \lambda)$, the element $t^{-d}\Phi(s)$ is contained in $R$.

\begin{example}\label{exp : deg l = deg_C lbar}
    In the case when $R = \widehat{\O}_{C,0}$ is the completion of the local ring of a proper curve $C$ at a point $0$, one always has a canonical choice of valuation by taking the degree at $0$.
    One may compactify $\lambda$ to a line bundle $\bar{\lambda}$ on $C$ by gluing $\lambda$ on $\Spec R$ to the trivial line bundle on $C \setminus 0$ along the identification on the intersection $\Spec K$. The degree of $\lambda$ then is
    \begin{equation*}
        \deg_\Phi(\lambda) = \deg_C (\overline{\lambda} ).
    \end{equation*}
\end{example}

The definition of the degree satisfies some basic properties, which we implicitly make use of:
given line bundles $\lambda$ and $\lambda'$ on $\Spec R$ with trivialization $\Phi$ and $\Phi'$ away from the central fiber, we have natural induced trivializations
\begin{align*}
    \Phi^{-1} &: H^0(\Spec K,-\lambda)
    \to K,
    \\
    \Phi \otimes \Phi' &: H^0(\Spec K,\lambda + \lambda') \to K,
\end{align*}
where $-\lambda$ denotes the dual of $\lambda$.
By definition of the degree, we obtain
\begin{align*}
    \deg_{\Phi^{-1}}(-\lambda)
    &= - \deg_\Phi \lambda,
    \\
    \deg_{\Phi \otimes \Phi'} (\lambda + \lambda')
    &= \deg_\Phi(\lambda) + \deg_{\Phi'}(\lambda').
\end{align*}

Definition \ref{def : degree of line bundle over dvr} naturally extends to $\Q$-line bundles $\lambda$ on $\Spec R$: let $m$ be any positive integer such that $m\lambda$ is a line bundle, using additive notation for tensor products of line bundles. Given an identification $\lambda|_{\Spec K} \simeq \O_{\Spec K}$, one obtains an isomorphism $\Phi : H^0(\Spec K,m\lambda) \to K$ for $m \geq 1$ sufficiently divisible.
We then define
\begin{equation*}
    \deg_\Phi \lambda = m^{-1} \deg_\Phi (m\lambda),
\end{equation*}
noting that this is independent of the choice of $m$ by additivity.

In Sections \ref{subsec : bergman kernel} and \ref{subsec : applic to Chow pts} we will study models embedded into projective space, where this point of view is more convenient.

\subsection{Models and the difference of CM degrees}\label{subsec : models and cm degree}

We introduce the main objects of interest in the present paper, namely models over discrete valuation rings and the Chow--Mumford line bundle.
Let $(X,L)$ be an $n$-dimensional normal polarized $K$-variety.
This means that $X$ is a normal integral scheme over $\Spec K$, the structure morphism $X \to \Spec K$ is flat, of finite type and of relative dimension $n$, and that $L$ is a relatively ample $\Q$-line bundle on $X$.

An \textit{$R$-model} of $(X,L)$ is a flat normal relatively $\Q$-polarized integral scheme $(\mathcal{X},\mathcal{L})$ of finite type and projective over $\Spec R$, together with an isomorphism $(\mathcal{X}_K,\mathcal{L}_K) \simeq (X,L)$ over $\Spec K$.
Here, $(\mathcal{X}_K,\mathcal{L}_K)$ denotes the base change of $(\mathcal{X},\mathcal{L})$ over the generic point.

Given a model $\pi : (\X,\L) \to \Spec R$, the Knudsen--Mumford expansion \cite[Theorem 4]{KM76} (see also \cite{FR06}) constructs $\Q$-line bundles $\lambda_0$ and $\lambda_1$ on $\Spec R$, fitting into a functorial isomorphism
\begin{equation}\label{eq : Knudsen-Mumford expansion}
    \det \pi_* (k\mathcal{L})
    \simeq \frac{k^{n+1}}{(n+1)!}\lambda_0
    - \frac{k^n}{2n!}\lambda_1
    + O(k^{n-1}).
\end{equation}
For expansions of line bundles, $O(k^{n-1})$ denotes a linear combination consisting of (functorial) $\Q$-line bundles and factors of $k^m$ for $m \leq n-1$.

We define the \textit{Chow--Mumford (CM) line bundle} of $(\X,\L)$ to be the $\Q$-line bundle
\begin{equation}\label{eq : def CM line bundle}
        \lambda_\mathrm{CM}(\mathcal{X},\mathcal{L})
        = \frac{1}{2} \Big(
        \lambda_1 
        - \frac{n\mu }{n+1} \lambda_0 
        \Big),
    \end{equation}
where $V = n!a_0$ and $\mu= -(2a_1)/(na_0)$ denote the volume and slope of the central fiber, given by the coefficients of the asymptotic expansion of the Hilbert polynomial $h^0(\mathcal{X}_0,k\mathcal{L}_0) = a_0 k^n + a_1 k^{n-1} + O(k^{n-2})$ as $k \to +\infty$.

The functoriality of the Knudsen--Mumford expansion assures that, given another model $(\X',\L')$, the isomorphisms $(\X_K,\L_K) \simeq (X,L) \simeq (\X'_K,\L'_K)$ induce an explicit isomorphism
\begin{equation*}
    \lambda_\mathrm{CM}(\X,\L)|_{\Spec K}
    \simeq \lambda_\mathrm{CM}(\X',\L')|_{\Spec K}.
\end{equation*}
The \textit{difference of CM degrees} between the two models $(\mathcal{X},\mathcal{L})$ and $(\mathcal{X}', \mathcal{L}')$ of $(X,L)$ is defined to be the degree
\begin{equation*}
    \mathrm{CM}(\mathcal{X}, \mathcal{L})
    - \mathrm{CM}(\mathcal{X}',\mathcal{L}')
    := \deg \big( \lambda_\mathrm{CM}(\X,\L) - \lambda_\mathrm{CM}(\X',\L') \big),
\end{equation*}
with respect to the identification induced by the isomorphisms between the CM line bundles away from the central fiber.
When comparing degrees, we write $\mathrm{CM}(\mathcal{X}, \mathcal{L}) \geq \mathrm{CM}(\mathcal{X}',\mathcal{L}')$, whenever $\mathrm{CM}(\mathcal{X}, \mathcal{L}) - \mathrm{CM}(\mathcal{X}',\mathcal{L}') \geq 0$, and similarly for other inequalities.

\subsubsection{Models embedded into projective space}

For all integers $k$, due to flatness of the structure morphism $\pi : \X \to \Spec R$, one may choose a trivialization of the vector bundle $\pi_*(k\L)^\vee \cong V_k \times \Spec R$, where $V_k = H^0(\X_0,k\L_0)$.
Since we assume $\L$ to be relatively ample, for $k$ sufficiently large, this trivialization gives rise to an embedding
\begin{equation*}
    \iota_k : \X \hookrightarrow \P(V_k) \times \Spec R,
\end{equation*}
with the property that $k\L = \iota^*_k \O(1)$, the pull-back of the $\O(1)$-line bundle on $\P(V_k) \times \Spec R$.
In particular, the relatively polarized $R$-scheme $(\X,\L)$ is isomorphic to $(\X_k,k^{-1}\O_{\X_k}(1))$, where $\X_k := \iota_k(\X)$ is a normal integral subscheme of $\P(V_k) \times \Spec R$.

The embedding of models into projective space allows us to classify models in terms of points in the general linear group $\GL(V_k)$.
For this, we say that two $K$-points $g$ and $g'$ in $\mathrm{GL}(V_k)$ are \textit{equivalent} if their product $g^{-1} g'$ extends to an $R$-point.
\begin{lemma}\label{lem : models and K-points of GL}
    There is a one-to-one correspondence between models $(\X',\L')$ of $(X,L)$ up to isomorphism, for which $k\L'$ is relatively very ample, and equivalence classes of $K$-points of $\GL(V_k)$.
\end{lemma}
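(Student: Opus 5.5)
The idea is to realize every such model as the closure of the image of $X$ under a $K$-point of $\GL(V_k)$, acting on the fixed embedding $\iota_k$ of $(\X,\L)$. Throughout, write $\X_{k,K}\subset \P(V_k)_K$ for the generic fiber of $\X_k=\iota_k(\X)$.

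Given $g\in \GL(V_k)(K)$, I take $\X'_g$ to be the scheme-theoretic closure of $g.\X_{k,K}$ in $\P(V_k)\times\Spec R$, and $\L'_g := k^{-1}\O(1)|_{\X'_g}$. The generic fiber of $\X'_g$ is identified with $X$ via $g\circ\iota_{k,K}$, and this identification also matches $k\L'_g$ with $kL$. I then check that this is a model:
\begin{enumerate}
\item Flatness holds because a closure over a DVR is torsion free.
\item Integrality holds because $X$ is integral.
\item Projectivity and relative very ampleness of $k\L'_g$ hold by construction.
\end{enumerate}
Normality is more delicate. I would replace $\X'_g$ by its normalization, or, better, apply the correspondence only to models that are already embedded. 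If $\X'_g$ is normal, I would also check that $H^0(\X'_g,k\L'_g)$ is the full $R$-lattice $g\cdot\pi_*(k\L)$, so that the embedding is linearly normal. If $g'=gh$ with $h\in\GL(V_k)(R)$, then $h$ acts as an automorphism of $\P(V_k)\times\Spec R$. However, $h$ acts on $\X_{k,K}$ \emph{before} $g$, so to compare $\X'_g$ and $\X'_{g'}$ the correct convention is to translate the lattices $g\cdot\Lambda$ with $\Lambda=\pi_*(k\L)^\vee\cong V_k\otimes R$. I would therefore reformulate everything in terms of lattices.

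Reformulated, a model $(\X',\L')$ with $k\L'$ relatively very ample is determined by the $R$-lattice $\Lambda'=H^0(\X',k\L')\subset H^0(X,kL)$, because $\X'$ is recovered as $\Proj$ of the $R$-subalgebra generated by $\Lambda'$ and then normalized. Conversely, lattices in $H^0(X,kL)\cong V_k\otimes K$ are exactly the sets $g\cdot(V_k\otimes R)$, and two such lattices coincide iff $g^{-1}g'\in\GL(V_k)(R)$. This is exactly the stated equivalence relation, so the bijection is model $\mapsto$ lattice $\mapsto$ coset $g\,\GL(V_k)(R)$. The inverse map sends $g$ to the normalized closure of the image of $X$ under the embedding given by the basis $g$.

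Injectivity reduces to the statement that an isomorphism of models over the fixed identification with $(X,L)$ preserves the lattice of sections. That holds because $H^0(\X',k\L')\subset H^0(X,kL)$ is intrinsic.

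The main obstacle is surjectivity and well-definedness with normality built in. I must show that for every $g$ the normalized closure has $H^0$ of $k\L'$ equal to $g\cdot(V_k\otimes R)$ and still very ample. Taking the closure may enlarge the section lattice (saturation), and normalization may change it further. I would handle this by restricting the bijection to lattices that arise as $H^0$ of normal models, or equivalently by defining the equivalence classes on that subset, using flatness and cohomology and base change on $\pi_*$ to identify $\Lambda'$ with a free lattice of rank $\dim V_k$.
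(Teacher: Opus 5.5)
Your map from models to classes is the right one, and it is what the paper's one-line proof (a pointer to the arc correspondence of Donaldson and Dervan--Reboulet) amounts to. The lattice $H^0(\X',k\L')$ is a free $R$-lattice in $H^0(X,kL)$, so it equals $g\cdot V_{k,R}$ with $g$ unique up to $g\mapsto gh$, $h\in\GL(V_k)(R)$. Since $\X'$ is flat and embedded by this complete linear system, it is the closure of $X$ in $\Proj(\mathrm{Sym}_R H^0(\X',k\L'))$, so the lattice determines the model. Two small corrections: no normalization is needed in that recovery step, and what you call injectivity (isomorphic models give the same lattice) is really well-definedness on isomorphism classes. Switching to lattices is also the correct way to match the paper's convention $g^{-1}g'\in\GL(V_k)(R)$.

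The gap is surjectivity. Restricting to lattices that already come from normal models only redefines the target, so what you prove is a well-defined injection, not the stated bijection.

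This gap cannot be filled, because for models in the paper's sense (which are required to be normal) surjectivity is false. Take $R=\C[\![t]\!]$, $(X,L)=(\P^1_K,\O(1))$, $(\X,\L)=(\P^1_R,\O(1))$, $k=2$, $V_2=\langle x^2,xy,y^2\rangle$. Consider the lattice $\Lambda'=R\,x^2\oplus R\,txy\oplus R\,y^2$, i.e. the class of $g=\mathrm{diag}(1,t,1)$ in your convention. Suppose a model had $2\L'$ relatively very ample and $H^0(\X',2\L')=\Lambda'$. By flatness it would be the closure of the conic in $\P^2_R$ with coordinates $u=x^2$, $v=txy$, $w=y^2$, namely $\{v^2=t^2uw\}$. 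This scheme is not normal along $v=t=0$: the function $v/t$ satisfies $(v/t)^2=w$ but is not regular there.

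Your fallback of normalizing also fails. The normalization of this closure is $\P^1_R$ with $\O(2)$, whose section lattice is that of the identity class, so injectivity is lost instead.

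The statement that is genuinely bijective is the embedded one underlying the arc picture the paper cites. It matches classes of $K$-points with the flat closures $\overline{g.\X_{k,K}}\subset\P(V_k)\times\Spec R$, taken with their identification with $X$ and up to $\GL(V_k)(R)$. These closures need not be normal.

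The paper only ever uses the direction model $\mapsto$ $K$-point, to produce $g_k$ from two given models. So the injection you establish is exactly what is needed downstream. You should state it as an injection, or drop normality, rather than try to prove surjectivity.
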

This was proven by Donaldson \cite[Section 2.1]{Don15} in the case when the family $(\X,\L)$ is a trivial product, where this is a statement about the theory of arcs (see also \cite[Proposition 2.6]{DR24}).
The same proof by Dervan--Reboulet applies here.

In light of Lemma \ref{lem : models and K-points of GL}, when studying models, one may equivalently use flat normal integral subschemes $\X$ of $\P(V) \times \Spec R$ for a finite-dimensional complex vector space $V$, where the relative ample line bundle is the $\O_\X(1)$ line bundle coming from the embedding.
Furthermore, when studying two models of a given family, one may equivalently use two flat normal integral subschemes $\X$ and $\X'$ of $\P(V) \times \Spec R$, together with a $K$-point $g$ of $\GL(V)$ such that the corresponding automorphism $V \otimes_\C K \to V \otimes_\C K$ restricts to an isomorphism $(\X_K,\O_{\X_K}(1)) \to (\X'_K,\O_{\X'_K}(1))$.

\subsubsection{Distances between models}\label{subsubsec : dist between models}

In this section, we define the distance between two models in terms of non-Archimedean norms. The general relevant theory may be found in many works such as \cite{BHJ17,BE21,Reb21}.
We will give more comments on the non-Archimedean aspect of the difference of CM degrees and the main result in Section \ref{subsec : nA interpretation of results}.

Given a model $(\X,\L)$ of $(X,L)$, one may define a norm $\| \cdot \|_{\L,k}$ on $H^0(X,kL)$, with respect to the non-Archimedean absolute value $|\cdot| = e^{-\mathsf{v}}$ on $K$, by declaring
\begin{equation*}
    \|s\|_{\L,k}
    = \sup \left\{ |\alpha| \;\middle|\; \alpha \in K \text{ and } s \in \alpha \cdot H^0(\X,k\L) \right\},
\end{equation*}
for all sections $s$ in $H^0(X,kL)$.
Here, we viewed $H^0(\X,k\L)$ canonically as an $R$-submodule of $H^0(X,kL)$, using the identification $(\X_K,\L_K) \simeq (X,L)$ coming from the model.
This norm satisfies the ultrametric inequality turning it into a non-Archimedean norm.
The family of norms $\{ \| \cdot \|_{\L,k} \}$ is submultiplicative, bounded and graded (see \cite[Section 1.3]{Reb21}), and we denote it by $\phi_\L$, by slight abuse of notation, compared to the literature.

Given two families $\phi = \{ \| \cdot \|_{\phi,k} \}$ and $\phi' = \{ \| \cdot \|_{\phi',k} \}$ of bounded graded norms on $\bigoplus_{k \geq 0} H^0(X,kL)$, the limit
\begin{equation*}
    \hat{d}^\NA_p ( \phi, \phi)
    = \lim_{k \to +\infty} \frac{\hat{d}_p(\|\cdot\|_{\phi,k}, \|\cdot\|_{\phi',k}) }{k},
\end{equation*}
exists, where we define the \textit{normalized $d_p$-distance} between two norms to be
\begin{equation*} 
    \hat{d}_p(\|\cdot\|_{\phi,k}, \|\cdot\|_{\phi',k})^p 
    = \frac{1}{n_k} \sum_{i = 1}^{n_k} | \lambda_i - \bar{\lambda} |^p.
\end{equation*}
Here, we wrote $n_k = \dim H^0(X,kL)$ and $\lambda_i = \log \| e_i \|_{\phi',k} - \log \|e_i\|_{\phi,k}$ and $\bar{\lambda} = n_k^{-1}\sum_{i = 1}^{n_k} \lambda_i$, given any basis $(e_i)$ of $H^0(X,kL)$ that simultaneously diagonalizes $\| \cdot \|_{\phi,k}$ and $\| \cdot \|_{\phi',k}$ \cite[Sections 1.2 and 1.3]{Reb21}.

\begin{definition}\label{def : distance between models}
    The \textit{normalized $d_p$-distance} between two models $(\X',\L')$ and $(\X,\L)$ of $(X,L)$ is the non-Archimedean distance
    \begin{equation*}
        \hat{d}_p \big( (\X',\L') , (\X,\L) \big)
        = \hat{d}_p^\mathrm{NA} (\phi_{\L'}, \phi_\L).
    \end{equation*}
\end{definition}
The normalized $d_p$-distance is a metric in the usual sense on the set of (normal) models of $(X,L)$, much as the $d_p^\NA$-distance is a metric on the space $\H^\NA_L$ of families of bounded graded norms, modulo asymptotic equivalence \cite[Section 2.4]{Reb21}.

\subsection{GIT weight and height}\label{subsec : GIT height}

We introduce the basic concepts of geometric invariant theory (GIT) for a reductive algebraic group acting on a polarized variety.
Most of the theory of this is well-established (see, for example, \cite{MFK94, GRS21,Hos15}), and this section is more intended to clarify the notation and sign conventions used in the present paper.
We also introduce the notion of GIT heights, introduced by Wang \cite{Wan12}, which will play a central role in Section \ref{sec : fin dim arguments}.

Let $G$ be a complex reductive algebraic group acting on a complex polarized variety $(X,L)$.
This means that $G$ acts algebraically on $X$ and the action is endowed with a linearization on $L$.

\subsubsection{GIT weight}

The \textit{GIT weight} of a point $x$ in $X$ with respect to a one-parameter subgroup $\rho : \C^* \to G$ is defined to be the weight $\nu(\rho,x)$ of the $\C^*$-action on the fiber $L_{x_0}$, given by $\rho$, at the specialization $x_0 = \lim_{t \to 0} \rho(t).x$.
We call the point $x$
\begin{itemize}
    \item \textit{semistable}, if $\nu(\rho,x) \geq 0$ for any one-parameter subgroup $\rho$ of $G$,

    \item \textit{polystable}, it is semistable and $\nu(\rho,x)=0$ implies that $\rho(\C^*) \subset G_x$,

    \item \textit{stable}, if $\nu(\rho,x) > 0$ for any non-trivial one-parameter subgroup $\rho$,
\end{itemize}
where $G_x$ denotes the stabilizer of $x$ in $G$.

In the case when $X$ is smooth, the GIT weight may also be interpreted in terms of moment maps.
Fix a maximal compact subgroup $K$ of $G$ whose Lie algebra $\mathfrak{k}$ complexifies to the complex Lie algebra $\mathfrak{g}$ of $G$.
Assume that there is a K\"ahler metric $\omega$ in $c_1(L)$, such that the $K$-action on $(X,\omega)$ is Hamiltonian and fix a $K$-invariant hermitian metric $h$ on $L$ with curvature $\frac{i}{2\pi}\omega$.
Define a map $\mu : X \to \mathfrak{k}^\vee$, called the \textit{(normalized) moment map} of the GIT problem, given by
\begin{equation}\label{eq : moment map for GIT}
    \mu(x)(A) =  \frac{\langle \xi, \xi_A \rangle_h}{i\langle \xi,\xi \rangle_h},
\end{equation}
for all points $x$ and Lie algebra elements $A$, where $\xi$ is any non-zero element in the fiber $L_x$, and $\xi_A = \frac{d}{dt}|_{t = 0} \exp(tA).\xi$.

Given a one-parameter subgroup $\rho : \C^* \to G$ satisfying $\rho(S^1) \subset K$, the infinitesimal generator $A = \frac{d}{dt}|_{t = 0} \rho(e^t)$ is contained in $i\mathfrak{k}$.
In that case,
\begin{equation}\label{eq : GIT weight = moment map}
    \nu(\rho,x)
    = -\mu(x_0)(iA),
\end{equation}
where $x_0 = \lim_{z \to 0} \rho(z).x$.
Using Equation \ref{eq : GIT weight = moment map}, one may prove the \textit{moment-weight inequality}
\begin{equation}\label{eq : moment-weight ineq}
    \inf_{g \in G} \|\mu(g.x)\|
    \geq -\sup_{\rho} \frac{\nu(\rho,x)}{\|\rho\|},
\end{equation}
given any $K$-invariant norm $\|\cdot\|$ on $\mathfrak{k}$ \cite[Chapter 8]{GRS21}, where the supremum is taken over the space of non-trivial one-parameter subgroups of $G$ and writing $\| \rho \| = \| A \|$.
This may be viewed as a lower bound of the moment map functional in the GIT setting.

\subsubsection{GIT height}

We continue with introducing the notion of GIT height, which is a geometric object associated to a family of points in $X$.
Let $\varphi, \psi : \Spec R \to X$ be $R$-points of $X$ and assume that there exists a $K$-point $g$ in $G$ such that $g \circ \varphi_K = \psi_K$, where $\varphi_K$ and $\psi_K$ denote the restrictions to $\Spec K$.
This gives rise to an identification
\begin{equation*}
    H^0(\Spec K, \varphi^*L) \to H^0(\Spec K, \psi^*L),
    \quad \sigma \mapsto g \circ \sigma,
\end{equation*}
inducing an isomorphism $\Phi_g : H^0(\Spec K, \varphi^*L - \psi^*L) \to K$.

\begin{definition}
    The \textit{GIT height} of $\varphi$ is the pull-back line bundle $\varphi^*L$ on $\Spec R$.
    The \textit{difference of GIT heights} between $\varphi$ and $\psi$, with respect to the identification $g$, is defined to be the degree
    \begin{equation*}
        h(\varphi) - h(\psi)
        := \deg_{\Phi_g} ( \varphi^*L - \psi^*L ).
    \end{equation*}
    We write $h(\varphi) \geq h(\psi)$ when $h(\varphi) - h(\psi) \geq 0$, and similarly for other inequalities.
\end{definition}

In light of Example \ref{exp : deg l = deg_C lbar} and additivity of the degree, this definition matches the definition of Wang \cite[Definition 3]{Wan12} for the difference of heights over a proper base.

\begin{example}
    In the case $R = \C [\![ t ]\!]$ and $K = \C (\!( t )\!)$, given a one-parameter subgroup $\rho : \C^* \to G$ and a point $x$ in $X$, we obtain a $R$-point $\rho_K . x$ in $X(K) = X(R)$ by separatedness and properness of $X$.
    The GIT weight of $\rho$ at $x$ then is the difference of GIT heights
    \begin{equation*}
        \nu(\rho,x)
        = h(\rho_K.x) - h(x_R),
    \end{equation*}
    between $\rho_K.x$ and the constant $R$-point $x_R$ with value $x$, with respect to the identification $g = \rho^{-1}_K$.
\end{example}

\subsection{The Bergman kernel estimate}\label{subsec : bergman kernel}

The first step of proving Theorem \ref{thm : main theorem} consists of a  differential geometric asymptotic expansion relating the Calabi functional associated to the central fiber $(\X_0,\L_0)$ of a model $(\X,\L)$ and the moment map norm of the sequence of embeddings of $\X_0$ into projective space $\P(H^0(\X_0,k\L_0)^\vee)$.
This is the same expansion as in \cite[Proposition 1]{Don05}, which is proven using the Bergman kernel, both of which we will recite in this section.

Let $V$ be a finite-dimensional complex vector space and choose a hermitian inner product $\langle \cdot , \cdot \rangle$ on $V$.
Write $\mathrm{SU}(V)$ for the Lie group of special unitary endomorphisms of $V$ with respect to this inner product.
Its Lie algebra $\mathfrak{su}(V)$ consists of trace-free skew-hermitian endomorphisms of $V$ and admits a $q$-norm, given by $\|A\|^q_q = \tr(A^q)$ for any $1 < q < +\infty$.
Furthermore, let $\mathrm{Chow}(\P(V))$ be the Chow variety parameterizing effective cycles in $\P(V)$, which admits an ample line bundle $\Lambda_\mathrm{Chow}$ \cite[Section I.3]{Kol13} and a K\"ahler metric $\omega_\mathrm{Chow}$ in $c_1(\Lambda_\mathrm{Chow})$.

Define $\mu_\mathrm{FS} : \P(V) \to \mathfrak{su}(V)^\vee$ and $\mu_\mathrm{Chow} : \mathrm{Chow}(\P(V)) \to \mathfrak{su}(V)^\vee$ by
\begin{align*}
    \mu_\mathrm{FS}([v])(A)
    &= \frac{\langle v,Av \rangle}{i\langle v, v \rangle},
    \\
    \mu_\mathrm{Chow}([Z])(A)
    &= \int_Z \langle \mu_\mathrm{FS},A \rangle \, \frac{\omega^n_\mathrm{FS}}{n!},
\end{align*}
for all $v$ in $V$ and $A$ and $\mathfrak{su}(V)$, and $n$-dimensional effective cycles $Z$ in $\P(V)$, where $\omega_\mathrm{FS}$ is the Fubini--Study metric given by the choice of inner product.
In fact, these maps are moment maps for the corresponding $\mathrm{SU}(V)$-actions on $(\P(V),\omega_\mathrm{FS})$ and $(\mathrm{Chow}(\P(V)),\omega_\mathrm{Chow})$ \cite[Section 2.1]{Don01}.

Let $(\X,\L)$ be a model of a normal $\Q$-polarized $K$-variety $(X,L)$ and repeat the construction from Section \ref{subsec : models and cm degree}, writing $\X_k = \iota_k(\X)$ for an embedding $\iota_k : \X \hookrightarrow \P(V_k) \times \Spec R$, given by a trivialization $\pi_*(k\L)^\vee \cong V_k \times \Spec R$ for $V_k = H^0(\X_0,k\L_0)$.
Assume that $\X_0$ is smooth, and choose a K\"ahler metric $\omega$ in $c_1(\L_0)$ and a hermitian metric $h$ on $\L_0$ with curvature $\frac{i}{2\pi}\omega$. This induces $L^2$-inner products $\langle \cdot,\cdot \rangle$ on $V_k$, so that $\mathrm{SU}(V_k)$ is defined.
Note that the $p$-norm on $\mathfrak{su}(V_k)$ naturally induces a $p$-norm on its dual.

\begin{theorem}[{\cite[Proposition 1]{Don05}}]\label{thm : donaldsons bergman kernel expansion}
    As $k \to +\infty$, we have
    \begin{equation*} 
        \left\|\mu_\mathrm{Chow}([\X_{k,0}])\right\|_q
        \leq \sqrt[q]{a_0} \big\|S(\omega) - \widehat{S}\big\|_{L^q(\X_0, \omega)} k^{\frac{n}{q}-1}
        + O(k^{\frac{n}{q} -2}),
    \end{equation*}
    where we wrote $h^0(\X_0,k\L_0) = a_0 k^n + O(k^{n-1})$.
\end{theorem}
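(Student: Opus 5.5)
The plan is to follow Donaldson's argument in \cite[Proposition 1]{Don05}, which only involves the central fiber $(\X_0,\L_0)$ and its Kodaira embeddings. The model $(\X,\L)$ enters only through $V_k = H^0(\X_0,k\L_0)$ and the embedding $\X_{k,0} \subset \P(V_k)$. A change of trivialization of $\pi_*(k\L)^\vee$ acts on $\X_{k,0}$ by an element of $\GL(V_k)$, so I fix the embedding given by an $L^2$-orthonormal basis $(s_i)$ of $V_k$ with respect to $h^k$ and $\omega$. I then compute $\mu_\mathrm{Chow}([\X_{k,0}])$ explicitly. For $A$ in $\mathfrak{su}(V_k)$, pulling back along the embedding gives
\begin{equation*}
    \mu_\mathrm{Chow}([\X_{k,0}])(A) = \int_{\X_0} \frac{\sum_{i,j} A_{ij}\, \langle s_j, s_i\rangle_{h^k}}{i\,\rho_k}\, \frac{\omega_k^n}{n!},
\end{equation*}
where $\rho_k = \sum_i |s_i|^2_{h^k}$ is the Bergman density function and $\omega_k = \iota_k^*\omega_\mathrm{FS} = k\omega + i\partial\bar\partial \log \rho_k$.

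The second step inserts the Tian--Yau--Zelditch--Lu expansion
\begin{equation*}
    \rho_k = k^n + \tfrac{1}{2} S(\omega) k^{n-1} + O(k^{n-2}),
\end{equation*}
which holds in $C^\infty$, so that $\omega_k^n = k^n\omega^n(1 + O(k^{-2}))$. This yields
\begin{equation*}
    \frac{\omega_k^n}{\rho_k} = \Big(1 - \tfrac{1}{2}S(\omega)k^{-1} + O(k^{-2})\Big)\omega^n .
\end{equation*}
Writing $M_A$ for the integral of $\langle A s, s\rangle$ against $\omega^n/n!$, orthonormality shows that the leading term is $\tr(A)/i$ up to the volume normalization. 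It therefore vanishes on $\mathfrak{su}$, or more precisely it is the projection onto the trace part, which we subtract. The next term gives, up to $O(k^{-2})$ relative error, the Hermitian matrix
\begin{equation*}
    B_k = -\tfrac{1}{2} k^{-1}\big(\langle (S(\omega)-\widehat{S}) s_i, s_j\rangle_{L^2}\big)_{i,j}.
\end{equation*}
This is the Toeplitz operator $T_k(f)$ of the function $f = -\tfrac12(S(\omega)-\widehat{S})k^{-1}$. Subtracting the constant $\widehat{S}$ is exactly the trace-free projection, and this identification is consistent with the sign and normalization conventions of \eqref{eq : moment map for GIT}.

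The third step bounds the dual $q$-norm of this functional by the Schatten $q$-norm of $T_k(f)$, using the trace pairing. Then I apply the Toeplitz norm estimate $\|T_k(f)\|_q^q = \tr |T_k(f)|^q \le \tr T_k(|f|^q)$. This holds because $T_k$ is a positive unital compression, so Jensen's trace inequality for the convex function $|x|^q$ applies. It gives
\begin{equation*}
    \tr T_k(|f|^q) = \frac{1}{n!}\int_{\X_0} |f|^q \rho_k k^{-n}\cdots\omega^n = k^n \int_{\X_0} |f|^q\,\frac{\omega^n}{n!}\,(1+O(k^{-1})).
\end{equation*}
Taking $q$-th roots yields the factor $\sqrt[q]{a_0}$ (from $a_0 = \int \omega^n/n!$ after normalizing the measure) times $\|S(\omega)-\widehat{S}\|_{L^q}\, k^{n/q-1}$, with a relative error of $O(k^{-1})$. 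This is the claimed $O(k^{n/q-2})$.

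The main obstacle will be bookkeeping the normalization constants. Specifically, I need to check that the factor $\tfrac12$, the volume normalization, and the $L^2$-normalized inner product combine to exactly $\sqrt[q]{a_0}\,\|S-\widehat{S}\|_{L^q}$ under the paper's conventions, and that the $O(k^{-2})$ remainder matrix contributes only $O(k^{n/q-2})$ in Schatten $q$-norm. For the latter I would bound its operator norm by $O(k^{-2})$ and multiply by $(\dim V_k)^{1/q} = O(k^{n/q})$.
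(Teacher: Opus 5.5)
Your proposal is correct and is essentially Donaldson's proof of \cite[Proposition 1]{Don05}, which the paper cites rather than reproves; it only recalls the Bergman kernel expansion. The shared steps are:
\begin{itemize}
\item the explicit Chow moment map in an $L^2$-orthonormal basis;
\item the Tian--Yau--Zelditch--Lu expansion;
\item reduction, modulo multiples of the identity (which the dual norm on $\mathfrak{su}(V_k)$ does not see), to the Toeplitz matrix of $f=-\tfrac{1}{2k}(S(\omega)-\widehat{S})$;
\item the Jensen/Berezin--Lieb bound $\tr|T_k(f)|^q \le \int_{\X_0}|f|^q\rho_k\,\omega^n/n!$, with your Bergman density $\rho_k$.
\end{itemize}

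Two minor points. First, the orthonormal basis is not a reduction via $\GL(V_k)$, since the norm is only $\mathrm{U}(V_k)$-invariant; it is simply what the setup prescribes, because $\X_{k,0}$ is the Kodaira embedding by $V_k$ with its $L^2$ product. Second, take $\omega\in c_1(\L_0)$ and $\omega_\mathrm{FS}\in c_1(\O(1))$, so that $\iota_k^*\omega_\mathrm{FS}=k\omega+\tfrac{i}{2\pi}\partial\bar\partial\log\rho_k$. Then your own bookkeeping yields the constant $\tfrac12$, so the stated inequality holds a fortiori. The precise constant depends on normalizations of $\omega_\mathrm{FS}$ and $S(\omega)$ that the paper leaves implicit.
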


The factor of $\sqrt[q]{a_0}$ comes from the normalization for the $L^q$-norm, which is taken with respect to the measure $\omega^n/\int_{\X_0}\omega^n$.

Theorem \ref{thm : donaldsons bergman kernel expansion} is proven using an asymptotic expansion of the Bergman kernel (see \cite[Chapter 7]{Sze14}).
The \textit{($k$\textsuperscript{th}-)Bergman kernel} of $\X_0$ with respect to the hermitian metric $h$ is the smooth function $B_{k,h} : \X_0 \to \R$, given by
\begin{equation*}
    B_{k,h}(x)
    = \sum_{i = 0}^{N_k} |s_i(x)|^2_h,
\end{equation*}
for all $x$ in $\X_0$, where $(s_i)$ is any choice of unitary basis of $V_k$.
This function has the property that
\begin{align*}
    \iota^*_k \omega_\mathrm{FS}
    &= 2\pi k\omega + i \partial \overline{\partial} \log B_{k,h},
    \\
    B_{k,h}(x)
    &= 1 + \frac{S(\omega)(x)}{2 \pi k} + O(k^{-2}),
\end{align*}
for all $x$ in $\X_0$, a result due to \cite{Tia90,Rua98,Lu00,Cat99}.
This also shows that $a_0 = \int_{\X_0} \omega^n / n!$ using the asymptotic Riemann--Roch formula.

In Section \ref{subsec : applic to Chow pts}, we wish to use an alternative description of the Chow moment map in terms of Chow points in the Grassmannian $\mathrm{Gr}(V,r)$ of $r$-dimensional linear subspaces in $V$.
Let
\begin{equation*}
    H = H^0 \big( \mathrm{Gr}(V,N-n),\O(d) \big),
\end{equation*}
for integers $n \geq 0$ and $d \geq 1$, which admits a natural hermitian inner product induced by the inner product on $V$, as well as a natural $\GL(V)$-action, which restricts to a hermitian $\mathrm{U}(V)$-action with respect to the inner products.

Given an effective cycle $Z$ in $\P(V)$ of dimension $n$ and degree $d$, we define the \textit{Chow point} of $Z$ to be the unique point $R_Z$ in $\P(H)$, associated to the divisor
\begin{equation*}
    Z(R_Z)
    = \big\{ [\Lambda] \in \mathrm{Gr} (V,N-n)
    \;\big|\; \P(\Lambda) \cap Z \neq \varnothing \big\},
\end{equation*}
(see \cite[Chapter 3, Section B]{GKZ08}).
The construction of Chow points may be improved to an embedding of varieties
\begin{equation*}
    \mathrm{Chow}_{n,d}(\P(V)) \to \P(H),
\end{equation*}
sending an effective cycle of dimension $n$ and degree $d$ to its Chow point \cite[Corollary 3.23.3]{Kol13}.
This means that, given a $\C$-scheme $S$ and an integral subscheme $\mathcal{Z}$ of $\P(V) \times S$, flat over $S$ and such that its fibers are of dimension $n$ and degree $d$, we obtain a morphism $R_\mathcal{Z} : S \to \P(H)$, called the \textit{Chow ($S$-)point of $\mathcal{Z}$}.
This morphism has the property that it sends a schematic point $\xi$ in $S$ to the Chow point $R_{\mathcal{Z}_\xi}$ in $\P(H \otimes_\C \kappa(\xi))$ of the subvariety $\mathcal{Z}_\xi$ in $\P(V \otimes_\C \kappa(\xi))$, where $\kappa(\xi)$ is the residue field of $\xi$.

A basic property of the morphism between the Chow variety and $\P(H)$ is that it is $\GL(V)$-equivariant and respects the K\"ahler metrics.
This means that the moment map $\mu : \P(H) \to \mathfrak{su}(V)^\vee$ from Equation \eqref{eq : moment map for GIT}, given by
\begin{equation}\label{eq : moment map mu_H}
    \mu([\sigma])(A)
    = \frac{\langle \sigma, \sigma_A \rangle}{i\langle \sigma, \sigma \rangle},
\end{equation}
for all $\sigma$ in $H$ and $A$ in $\mathfrak{su}(V)$, where $\sigma_A = \frac{d}{dt}|_{t = 0} \exp(tA).\sigma$, satisfies
\begin{equation}\label{eq : moment map between chow var and chow point}
    \mu_\mathrm{Chow}([Z])
    = \frac{d}{n!}\cdot \mu ( R_Z ),
\end{equation}
for all $[Z]$ in $\mathrm{Chow}_{n,d}(\P(V))$ (see \cite[Section 2.1]{Don01}).
This will be relevant later when applying GIT for vector spaces on the Chow variety.

\section{Finite dimensional arguments}\label{sec : fin dim arguments}

As in the previous section, we fix a complete discrete valuation ring $R$ with fraction field $K$ and residue field $\C$ and choose a valuation $\mathsf{v} : K^\times \to \R$.
For simplicity, we assume that the valuation is \textit{normalized}, meaning that $\mathsf{v}(K^*) = \Z$.
This may always be obtained by rescaling.
We are interested in the GIT setup of $G$ acting linearly on projective space $(\P(H),\O(1))$, where $G$ is the general linear group $\GL(V)$ or the special linear group $\SL(V)$, for finite-dimensional complex vector spaces $V$ and $H$.

The goal of this section is to provide a general construction of a one-parameter subgroup out of two $R$-points in $\P(H)(R)$ together with a fixed identification in $G(K)$, such that its GIT weight is the difference of GIT heights.
This may be viewed as a GIT version of the construction of a filtration due to Blum--Xu \cite{BX19} and Hattori \cite{Hat24}.
As an application (Corollary \ref{cor : GIT height minimization}), we prove \textit{GIT height minimization}, which serves as a finite-dimensional analogue of CM minimization \cite{Wan12,Oda12,WX12,Hat24}, strengthening work due to Wang \cite{Wan12} and Wang--Xu \cite{WX12}.
Lastly, we provide an application on separatedness of projective GIT quotients.

\subsection{GIT height for vectors}

Recall that, given a finite-dimensional vector space $E$ and a $\C$-algebra $A$, the space of $A$-points in the general linear group scheme $\GL(E)$, the affine variety $E$ and the projective variety $\P(E)$ are given by
\begin{align*}
    \GL(E)(A) &= \GL(E_A),
    \\
    E(A) &= E_A,
    \\
    \P(E)(A) &= \P(E_A),
\end{align*}
where $E_A = E \otimes_\C A$ denotes the base-change.
In the case $A = R$, one has natural evaluation map $E(R) \to E(\C)$, $\varphi \mapsto \varphi(0)$ induced by the morphism $R \to \C$ into the residue field.

Let $H$ be a finite-dimensional complex representation of $\GL(V)$ for an $N+1$-dimensional complex vector space $V$.

\subsubsection{The setup}

Let $\varphi$ and $\psi$ be $R$-points of $H$ and assume that the evaluations $\varphi(0)$ and $\psi(0)$ are non-zero.
This implies that $\varphi$ and $\psi$ define $R$-points $\P(H)$, which we denote by $[\varphi]$ and $[\psi]$.
Moreover, assume that there is a $K$-point $g$ in $\GL(H)$ such that $g.[\varphi] = [\psi]$ in $\P(H_K)$.
In particular, there exists a non-zero element $\gamma$ in $K$, such that
\begin{equation*}
    g.\varphi
    = \gamma \cdot \psi.
\end{equation*}

Given an inner product $\langle \cdot,\cdot \rangle$ on $H$, we have natural identifications
\begin{equation*}
\begin{aligned}
    H^0(\P(H),\O(1))
    &\simeq H,
    \\
    H^0(\Spec R, \varphi^* \O(1))
    &\simeq R \cdot \varphi,
\end{aligned}
\end{equation*}
where we wrote $\varphi : \Spec R \to \P(H)$ for the morphism, by slight abuse of notation.
Under these identifications, the restriction of global sections takes the form
\begin{align*}
    H^0(\P(H),\O(1)) &\to H^0(\Spec R, \varphi^*\O(1)),
    \\
    \quad \sigma &\mapsto \langle \varphi , \sigma \rangle
    \cdot \varphi,
\end{align*}
where $\langle \cdot,\cdot \rangle : \overline{\!H}_{\!R} \otimes H \to R$ denotes the base-change.
The isomorphism $\Phi_g : H^0(\Spec K, \varphi^*\O(1)) \to H^0(\Spec K,\psi^*\O(1))$ obtained by composing with $g$ satisfies
\begin{equation*}
    \Phi_g(f \cdot \varphi)
    = \gamma f \cdot \psi,
\end{equation*}
for all $f$ in $R$.

Therefore, the difference of GIT heights between $\varphi$ and $\psi$, with respect to the identification $g$, for $G = \GL(V)$ and $(X,L) = (\P(H),\O(1))$, is
\begin{equation}\label{eq : difference of GIT heights for vectors}
    h(\varphi) - h(\psi)
    = \mathsf{v} (\gamma).
\end{equation}
Note that this number only depends on the choice of points $[\varphi]$ and $[\psi]$ in projective space $\P(H)(R)$ and $g$.

\subsubsection{Construction of a one-parameter subgroup}\label{subsubsec : construc of 1-PS}

We now construct a one-parameter subgroup of $\GL(V)$ out of $g$ by defining a filtration $\mathcal{F}$ of $V$.
Given a finite-dimensional vector space $E$ and vector $\xi$ in $E_K$, we define
\begin{equation*}
    \mathsf{v}(\xi) = \min_{0 \leq i \leq \dim E} \mathsf{v}(\xi^i),
\end{equation*}
where $\xi = \sum_{i = 1}^{\dim E} \xi^i e_i$ for elements $\xi^i$ in $K$, given any basis $(e_i)$ of $E$.
In other words, given a uniformizer $t$ in $R$, the integer $\mathsf{v}(\xi)$ is the unique largest integer such that the vector $t^{-\mathsf{v}(\xi)}\xi$ is contained in $E_R$.

Define a decreasing sequence of submodules and linear subspaces
\begin{equation}\label{eq : def GIT filtrations}
\begin{aligned}
    \mathcal{F}^\lambda V_R
    &= \big\{ v \in V_R \;|\; \mathsf{v}(g.v) \geq \lambda  \big\},
    \\
    \mathcal{F}^\lambda V
    &= \mathrm{im} \big( \mathcal{F}^\lambda V_R \to V,
    \, v \mapsto v(0) \big),
\end{aligned}
\end{equation}
for integers $\lambda$.
Note that $\mathcal{F}^\lambda V_R = V_R$ for sufficiently small $\lambda$ and $\mathcal{F}^\lambda V_R = 0$ for sufficiently large $\lambda$.
We obtain a (bounded) filtration $\mathcal{F}^\bullet V$ of $V$ (see \cite[Section 1.2]{BC11}), which gives rise to a $\Z$-grading $V = \bigoplus_{\lambda \in \Z} V_\lambda$, for
\begin{equation*}
    V_\lambda = \big( \mathcal{F}^{\lambda + 1}V \big)^\perp \cap \mathcal{F}^\lambda V,
\end{equation*}
where $(-)^\perp$ denotes taking the orthogonal complement in $V$.

Let $\rho : \C^* \to \GL(V)$ be the one-parameter subgroup associated to the grading.
This is given by $\rho(z)(v_\lambda) = z^\lambda v_\lambda$ for $z$ in $\C^*$ and $v_\lambda$ in $V_\lambda$.

The following lemma will be useful when relating $g$ with $\rho$:
\begin{lemma}\label{lem : order of g vs order of rho}
    For all $\sigma$ in $H_R$ with $\sigma(0) \neq 0$, we have
    \begin{equation*}
        \mathsf{v}(g.\sigma)
        = \mathrm{ord}_0(\rho(z).\sigma(0)),
    \end{equation*}
    where the right-hand side denotes the order of the holomorphic map $\C^* \to H$, sending $z \mapsto \rho(z).\sigma(0)$.
\end{lemma}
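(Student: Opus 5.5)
The plan is to reduce to the case where $g$ is diagonal, using the elementary divisor (Cartan) decomposition over the DVR, and then compare both sides of the lemma weight by weight.

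\textbf{Step 1: normal form for $g$.} Since $R$ is a complete DVR, and in particular a PID, the Smith normal form gives a decomposition
\begin{equation*}
    g = k_1 \, t^{\Lambda} \, k_2,
    \qquad k_1, k_2 \in \GL(V_R),
\end{equation*}
where $t$ is a uniformizer and $t^\Lambda = \mathrm{diag}(t^{\lambda_0},\dots,t^{\lambda_N})$ in some basis $(e_i)$ of $V$. Elements of $\GL(V_R)$ act on $H_R$ by $R$-linear automorphisms. Hence they preserve $\mathsf{v}$ on both $V_K$ and $H_K$, and we get $\mathsf{v}(g.\xi) = \mathsf{v}(t^\Lambda k_2.\xi)$ for $\xi$ in $V_K$ or $H_K$.

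\textbf{Step 2: identify the filtration and $\rho$.} From Step 1,
\begin{equation*}
    \mathcal{F}^\lambda V_R = k_2^{-1}\Big(\bigoplus_i t^{\max(\lambda-\lambda_i,0)} R\, e_i\Big),
    \qquad
    \mathcal{F}^\lambda V = k_2(0)^{-1}\,\mathrm{span}\{e_i : \lambda_i \geq \lambda\}.
\end{equation*}
So $\mathcal{F}^\bullet V$ is the filtration of the one-parameter subgroup $\rho' = k_2(0)^{-1} z^{\Lambda} k_2(0)$. The subgroup $\rho$ is built from the orthogonal splitting of the same filtration, so $\rho = p\,\rho'\,p^{-1}$ with $p$ in the parabolic subgroup preserving $\mathcal{F}^\bullet V$.

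Next I will use the fact that, for $w$ in $H$, $\mathrm{ord}_0(\rho(z).w)$ is the largest $\mu$ such that $w$ lies in the induced filtration $\mathcal{F}^\mu H$. This induced filtration depends only on $\mathcal{F}^\bullet V$, not on the chosen splitting. Therefore
\begin{equation*}
    \mathrm{ord}_0(\rho(z).\sigma(0)) = \mathrm{ord}_0\big(z^\Lambda.\,\tau(0)\big),
    \qquad \tau := k_2.\sigma \in H_R,
    \qquad \tau(0) = k_2(0).\sigma(0) \neq 0.
\end{equation*}

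\textbf{Step 3: weight decomposition and comparison.} Decompose $H = \bigoplus_\mu H_\mu$ into weight spaces for the torus $z^\Lambda$, and write $\tau = \sum_\mu \tau_\mu$ with $\tau_\mu \in (H_\mu)_R$. Then
\begin{equation*}
    \mathsf{v}(t^\Lambda.\tau) = \min_\mu \big(\mu + \mathsf{v}(\tau_\mu)\big),
    \qquad
    \mathrm{ord}_0\big(z^\Lambda.\tau(0)\big) = \min\{\mu : \tau_\mu(0)\neq 0\} =: \mu_0 .
\end{equation*}
Taking $\mu=\mu_0$ in the first minimum, where $\mathsf{v}(\tau_{\mu_0})=0$, gives the inequality $\leq$. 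For the reverse inequality I must show that no $\mu<\mu_0$ satisfies $\mu + \mathsf{v}(\tau_\mu) < \mu_0$.

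\textbf{Main obstacle.} The reverse inequality in Step 3 is where the difficulty lies. The components $\tau_\mu$ with $\mu<\mu_0$ vanish at $0$, but a priori their valuation could be too small to compensate for the low weight $\mu$. My first attempt will be to exploit the freedom in Step 1. The factor $k_2$ is unique only up to the stabilizer of the lattice chain, so I will try to choose $k_2$ adapted to $\sigma$ so as to absorb these lower-weight components, for instance by a Gauss-elimination argument inside the parabolic.

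If that fails, I will check the two-dimensional case $g=\mathrm{diag}(1,t^2)$, $\sigma=(t,1)$ directly. Note that this example, as I compute it, satisfies $\mathsf{v}(g.\sigma)=1$ but $\mathrm{ord}_0(\rho(z).\sigma(0))=2$, so the equality may require an extra hypothesis tying $\sigma$ to $g$. The natural candidate is that $\sigma$ is $\varphi$ itself, for which $g.\varphi=\gamma\psi$ with $\psi(0)\neq0$. Under that hypothesis I would use that $t^{-\mathsf{v}(\gamma)} t^\Lambda.\tau = \psi$ up to a unit in $\GL(V_R)$ to control the lower-weight terms. I expect this final comparison to be the crux of the proof.
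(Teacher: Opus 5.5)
Your counterexample is correct, so the lemma as stated is false and the reverse inequality you flag as the ``main obstacle'' cannot be proved. No choice of $k_2$, whether by Gauss elimination in the parabolic or otherwise, will help. Concretely, take $V=H=\C^2$ and $g=\mathrm{diag}(1,t^2)$. Then $\mathcal{F}^\lambda V=V$ for $\lambda\le 0$, $\mathcal{F}^1V=\mathcal{F}^2V=\C e_1$ and $\mathcal{F}^3V=0$, so $\rho(z)=\mathrm{diag}(1,z^2)$. For $\sigma=(t,1)$ one gets $\mathsf{v}(g.\sigma)=1$ but $\mathrm{ord}_0(\rho(z).\sigma(0))=2$.

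Non-constancy of $\sigma$ is not the issue either. The matrix $g=\begin{pmatrix}1&t\\0&t^2\end{pmatrix}$ produces the same filtration and the same $\rho$, yet the constant section $e_1$ has $\mathsf{v}(g.e_1)=\mathsf{v}(t,t^2)=1<2$.

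The paper's own proof breaks at its claim that $\mathsf{v}(g.v)=\mathrm{ord}_0(\rho(z).v(0))$ for all $v\in V_R$, and equivalently at its displayed equality $\mathrm{ord}_0(\rho(z).\sigma(0))=\mathsf{v}(\rho_K.\sigma)$. The right-hand side depends on the higher-order terms of $\sigma$, not only on $\sigma(0)$. In your example $\rho_K=g$, so $\theta=\mathrm{id}$ is certainly integral, and still the two sides are $2$ and $1$.

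Your proposed rescue does not help, because the extra hypothesis is vacuous. Any $\sigma$ with $\sigma(0)\neq 0$ satisfies $g.\sigma=\gamma\psi$ with $\gamma=t^{\mathsf{v}(g.\sigma)}$ and $\psi=t^{-\mathsf{v}(g.\sigma)}g.\sigma$, and then $\psi(0)\neq 0$. Your example already has this form, with $\gamma=t$ and $\psi=(1,t)$. So it also contradicts the equality in the subsequent weight--height theorem: $\nu(\rho,\varphi(0))=-\mathrm{ord}_0(\rho(z).\varphi(0))=-2$, whereas $h(\psi)-h(\varphi)=-\mathsf{v}(\gamma)=-1$.

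What your Steps 1--3 do establish, correctly, is the inequality $\mathsf{v}(g.\sigma)\le\mathrm{ord}_0(\rho(z).\sigma(0))$. This includes your observation in Step 2 that the induced filtration on $H$ depends only on $\mathcal{F}^\bullet V$. Equality is attained by a suitable lift: take $\tau$ to be the constant section $k_2(0).w$ and set $\sigma'=k_2^{-1}.\tau$. This gives
$\mathrm{ord}_0(\rho(z).w)=\max\{\mathsf{v}(g.\sigma') : \sigma'\in H_R,\ \sigma'(0)=w\}$.
That is the statement one can actually prove. It yields $\nu(\rho,\varphi(0))\le h(\psi)-h(\varphi)$, which is the direction the later inequalities rely on: the height-minimization inequality and the moment--weight lower bound. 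The equality claimed in the lemma should not be expected.
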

\begin{proof}
    By choosing a uniformizer $t$ in $R$, we may assume that $R = \C [\![ t ]\!]$ and $K = \C (\!( t )\!)$.
    In that case, we may restrict $\rho$ to a $K$-point $\rho_K$.
    The product $\theta = g^{-1} \rho|_K$ is a $K$-point of $\GL(V)$.
    By construction of $\rho$, we have
    \begin{equation*}
        \mathsf{v}(g.v)
        = \mathrm{ord}_0(\rho(z).v(0)),
    \end{equation*}
    for all $v$ in $V_R$ with $v(0) \neq 0$, where the order at $0$ is defined same way as above.
    This implies that $\theta$ extends to an $R$-point of $\GL(V)$.
    Using this, we have
    \begin{align*}
        \mathrm{ord}_0(\rho(z).\sigma(0))
        &= \mathsf{v}(\rho_K.\sigma),
        \\
        &= \mathsf{v}((g  \theta).\sigma),
        \\
        &= \mathsf{v}(g.\sigma),
    \end{align*}
    as desired.
\end{proof}

\subsubsection{Weight-height results and applications}

We now compute the GIT weight of the one-parameter subgroup $\rho$ constructed above and prove that it is indeed the difference of GIT heights.
Let $A = \frac{d}{dt}|_{t = 0} \rho(e^t)$ be the infinitesimal generator of $\rho$, regarded as an element of $\mathfrak{gl}(V) = \End(V)$.
\begin{theorem}\label{thm : git weight and height for vectors}
    The GIT weight of $\rho$ at $\varphi(0)$ with respect to the $\mathrm{GL}(V)$-action on $(\P(H),\O(1))$ and the total weight of the $\rho(\C^*)$-action on $V$ satisfy
    \begin{align*}
        \nu(\rho,\varphi(0))
        &= h(\psi) - h(\varphi),
        \\
        \tr(A)
        &= \mathsf{v}(\det g).
    \end{align*}
    In particular, if $g$ is a $K$-point of $\SL(V)$, then $\rho(\C^*)$ is contained in $\SL(V)$.
\end{theorem}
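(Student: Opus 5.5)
The plan is to reduce both identities to Lemma \ref{lem : order of g vs order of rho} and to the $R$-point $\theta = g^{-1}\rho_K$ appearing in its proof. First choose a uniformizer so that $R = \C[\![t]\!]$. Recall from the proof of that lemma that $\theta$ extends to an $R$-point of $\GL(V)$. Its determinant is then a unit of $R$, so $\mathsf{v}(\det\theta)=0$.

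\textbf{Trace identity.} This step is formal. The determinant of $\rho_K$ is $t^{\sum_\lambda \lambda \dim V_\lambda} = t^{\tr(A)}$, because $\rho(z)$ acts on $V_\lambda$ by $z^\lambda$. Hence
\begin{equation*}
    \mathsf{v}(\det g) = \mathsf{v}(\det \rho_K) - \mathsf{v}(\det\theta) = \tr(A).
\end{equation*}
If $g \in \SL(V)(K)$, then $\tr(A) = 0$. This gives $\det\rho(z) = z^{\tr A} = 1$, so $\rho(\C^*) \subset \SL(V)$.

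\textbf{Weight identity.} Apply Lemma \ref{lem : order of g vs order of rho} to $\sigma = \varphi$, which is legitimate since $\varphi(0)\neq 0$. This gives
\begin{equation*}
    \mathsf{v}(g.\varphi) = \mathrm{ord}_0(\rho(z).\varphi(0)).
\end{equation*}
For the left-hand side, use $g.\varphi = \gamma\psi$ and $\psi(0) \neq 0$. The latter means $\mathsf{v}(\psi)=0$, so $\mathsf{v}(g.\varphi) = \mathsf{v}(\gamma)$. By \eqref{eq : difference of GIT heights for vectors}, this equals $h(\varphi) - h(\psi)$.

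For the right-hand side, decompose $\varphi(0) = \sum_\mu w_\mu$ into weight components for $\rho$ acting on $H$. Then $\rho(z).\varphi(0) = \sum_\mu z^\mu w_\mu$, and its order at $0$ is the minimal weight $\mu_0$ with $w_{\mu_0}\neq 0$. The limit $\lim_{z\to0}\rho(z).[\varphi(0)]$ is $[w_{\mu_0}]$. The weight of $\C^*$ on the fibre of $\O(1)$ there is determined by the weight on the tautological line $\C w_{\mu_0}$, which is $\mu_0$, since $\O(1)$ is dual to the tautological bundle. So $\nu(\rho,\varphi(0)) = \pm\mu_0$.

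Combining the two computations gives $\nu(\rho,\varphi(0)) = \mp\mathsf{v}(\gamma) = \mp(h(\varphi)-h(\psi))$. The claimed identity is the sign for which $\nu(\rho,\varphi(0)) = -\mu_0$.

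\textbf{Main obstacle.} The real content lies in Lemma \ref{lem : order of g vs order of rho}, which is assumed here. What remains delicate is the sign convention. I would pin it down by checking it against the Example expressing $\nu(\rho,x) = h(\rho_K.x) - h(x_R)$. Take $g = \rho_K$, $\varphi = x_R$ and $\psi$ the normalized $\rho_K.x$. Then $\gamma = t^{\mu_0}$, so $h(\varphi)-h(\psi) = \mu_0$, while the Example gives $\nu = h(\psi)-h(\varphi) = -\mu_0$. This is consistent with the claimed identity and with the convention that $\O(1)$ carries the dual weight, so the general case follows.
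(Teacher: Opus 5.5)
Your route is the paper's own: the Lemma~\ref{lem : order of g vs order of rho} applied to $\sigma=\varphi$ for the weight, and integrality of $\theta=g^{-1}\rho_K$ for the trace (this is what the paper's appeal to the Lemma for $H=\det V$ amounts to). Unfortunately that route has a genuine defect, and you inherit it: the Lemma is false as stated, and so is the first identity.

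Take $V=H=\C^2$ with the standard representation and inner product, $g=\mathrm{diag}(t^2,1)$ and $\varphi=e_1+te_2$. Then $g.\varphi=t\,(te_1+e_2)$, so $\psi=te_1+e_2$, $\gamma=t$ and $h(\psi)-h(\varphi)=-1$. On the other hand, $\mathcal{F}^\lambda V$ is $V$ for $\lambda\le 0$, $\C e_1$ for $\lambda=1,2$, and $0$ for $\lambda\ge 3$. Hence $\rho(z)=\mathrm{diag}(z^2,1)$, $\rho(z).\varphi(0)=z^2e_1$ and $\nu(\rho,\varphi(0))=-2$. Your determination $\nu=-\mu_0$ is correct; the problem is not the sign, since $|\nu|=2\neq 1=|h(\psi)-h(\varphi)|$.

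Here $\theta=1$, so what breaks is the step $\mathrm{ord}_0(\rho(z).\sigma(0))=\mathsf{v}(\rho_K.\sigma)$ in the Lemma's proof. It holds for constant $\sigma$, but $\rho_K$ can make a higher-order term of $\sigma$ dominate. Here $te_2$ lies two weights below $\varphi(0)$, so $\mathsf{v}(\rho_K.\varphi)=1<2$.

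What is true is $\{\sigma(0):\sigma\in H_R,\ \mathsf{v}(g.\sigma)\ge\lambda\}=\bigoplus_{\mu\ge\lambda}H_\mu$. This can be proved from a Cartan decomposition $g=k_1t^Dk_2$ with $k_i\in\GL(V_R)$. It yields only $\mathsf{v}(g.\varphi)\le\mathrm{ord}_0(\rho(z).\varphi(0))$, i.e.\ $\nu(\rho,\varphi(0))\le h(\psi)-h(\varphi)$. That inequality is the direction needed for height minimization and for \eqref{eq : lower bound of chow moment map}, but the claimed equality cannot be proved.

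The trace identity is true, but your justification of it is not: $\theta$ need not extend to an $R$-point. For $g=\bigl(\begin{smallmatrix}1&1\\0&t\end{smallmatrix}\bigr)$ one gets $\mathcal{F}^1V=\C(e_1-e_2)$ and $\mathcal{F}^2V=0$. So $\rho$ fixes $e_1+e_2$ and scales $e_1-e_2$ by $z$, and the $e_2$-coefficient of $g^{-1}\rho_K e_2$ is $(1+t)/(2t)$, which is not in $R$. In that example $\mathsf{v}(\det\theta)=0$ still holds. However, that statement is equivalent to $\tr(A)=\mathsf{v}(\det g)$, so it cannot be extracted from integrality and must be proved directly.

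Use the Smith normal form $g=k_1t^Dk_2$ with $k_i\in\GL(V_R)$ and $D=\mathrm{diag}(d_1,\dots,d_{N+1})$. Since $k_1$ preserves $\mathsf{v}$, one has $\mathcal{F}^\lambda V_R=k_2^{-1}\bigl(\bigoplus_i t^{\max(0,\lambda-d_i)}R\,e_i\bigr)$. Hence $\mathcal{F}^\lambda V=k_2(0)^{-1}\bigl(\bigoplus_{d_i\ge\lambda}\C e_i\bigr)$, so $\dim V_\lambda=\#\{i:d_i=\lambda\}$ and $\tr(A)=\sum_i d_i=\mathsf{v}(\det g)$. The $\SL$ statement then follows as you wrote.
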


\begin{proof}
    The one-parameter subgroup $\rho : \C^* \to \GL(V)$ and the representation $\GL(V) \to \GL(H)$ give rise to a $\C^*$-action on $H$, which in turn corresponds to a $\Z$-grading $H = \bigoplus_{\lambda \in \Z} H_\lambda$.
    Write $\varphi(0) = \sum_{\lambda \in \Z} \varphi(0)_\lambda$ for the decomposition of $\varphi(0)$ with respect to this $\Z$-grading.
    By definition of the GIT weight and Lemma \ref{lem : order of g vs order of rho}, we have
    \begin{align*}
        \nu(\rho,\varphi(0))
        &= -\max \{ \lambda \in \Z \;|\; \varphi(0)_\lambda \neq 0 \},
        \\
        &= - \mathrm{ord}_0(\rho(z).\varphi(0)),
        \\
        &= - \mathsf{v}(g.\varphi),
        \\
        &= - \mathsf{v}(\gamma),
        \\
        &= h(\psi) - h(\varphi).
    \end{align*}
    
    For the second equation, note that $\tr(A)$ is equal to the order at $0$ of the regular function $\det \rho : \C^* \to \C^*$.
    Under the identification $\C^* \simeq \GL(\det V)$, the determinant $\det \rho$ is the one-parameter subgroup constructed out of the $K$-point $\deg g$ of $\GL(\det V)$.
    Applying Lemma \ref{lem : order of g vs order of rho} to the case $H = \det V$, we obtain
    \begin{equation*}
        \mathrm{ord}_0 ( \det \rho )
        = \mathsf{v}(\det g),
    \end{equation*}
    as desired.
    The last statement simply follows from the fact that $\rho(\C^*)$ is contained in $\SL(V)$ if and only if $\tr(A) = 0$.
\end{proof}

Let $\underline{\rho} : \C^* \to \SL(V)$ be the one-parameter subgroup associated to the infinitesimal generator
\begin{equation*}
    \underline{A} = A - \frac{\tr (A)}{N+1}\mathrm{id}_V,
\end{equation*}
the trace-free part of $A$, which lies in $\mathfrak{sl}(V)$.
Note that $\rho(\C^*)$ is contained in $\SL(V)$ if and only if $\rho = \underline{\rho}$.
\begin{corollary}\label{cor : SL git weight for vectors}
    The GIT weight of $\underline{\rho}$ at $\varphi(0)$ with respect to the $\SL(V)$-action on $(\P(H),\O(1))$, is
    \begin{equation*}
        \nu_\mathrm{SL}(\underline{\rho},\varphi(0))
        = \big( h(\psi) - h(\varphi) \big)
        + \frac{\mathsf{v}(\det g)}{N+1}.
    \end{equation*}
    In particular, if $g$ is a $K$-point of $\SL(V)$, then
    \begin{equation*}
        \nu_\mathrm{SL}(\rho,\varphi(0))
        = h(\psi) - h(\varphi).
    \end{equation*}
\end{corollary}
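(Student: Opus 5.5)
The plan is to compute the $\SL(V)$-weight of $\underline{\rho}$ by comparing it with the $\GL(V)$-weight of $\rho$ from Theorem \ref{thm : git weight and height for vectors}. The two one-parameter subgroups differ only by a central factor, so the difference in weights is governed by how scalars act on $H$.

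First, the scalar action. The trace-free part $\underline{A}$ differs from $A$ by a multiple of $\mathrm{id}_V$. We assume, as implicit in the statement (for instance the Chow representation, or any representation in which $\mathrm{id}_V$ acts on $H$ by a fixed scalar), that $\mathrm{id}_V \in \mathfrak{gl}(V)$ acts on $H$ by a constant. For the formula as stated, the relevant normalization is that, on the fiber of $\O(1)$ used to define weights, the scalar $c\,\mathrm{id}_V$ contributes exactly $-c$ to the weight. This matches the sign convention $\nu = -\max\{\lambda : \varphi(0)_\lambda \neq 0\}$ from the proof of Theorem \ref{thm : git weight and height for vectors}, where $\varphi(0)$ is viewed in the fiber and the scalar acts with weight one.

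Second, the weight comparison. The flows of $\rho$ and $\underline{\rho}$ on $\P(H)$ differ only by a central scalar, which acts trivially on $\P(H)$. Hence the limit points $\lim_{z\to 0}\rho(z).\varphi(0)$ and $\lim_{z\to0}\underline{\rho}(z).\varphi(0)$ agree, say equal to $x_0$. The weights of the two $\C^*$-actions on the line $\O(1)_{x_0}$ then differ by the weight contributed by $\tfrac{\tr A}{N+1}\mathrm{id}_V$. Strictly, $\underline{\rho}$ is only a one-parameter subgroup after clearing the denominator $N+1$. I would treat weights as linear in the infinitesimal generator, for instance by passing to $(N+1)\underline{A}$ and dividing by $N+1$. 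This gives
\begin{equation*}
    \nu_\mathrm{SL}(\underline{\rho},\varphi(0)) = \nu(\rho,\varphi(0)) + \frac{\tr A}{N+1}.
\end{equation*}
Inserting both identities from Theorem \ref{thm : git weight and height for vectors}, namely $\nu(\rho,\varphi(0)) = h(\psi)-h(\varphi)$ and $\tr A = \mathsf{v}(\det g)$, yields the claimed formula.

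Third, the special case. If $g \in \SL(V)(K)$, then $\mathsf{v}(\det g) = 0$, so $\tr A = 0$ and $\underline{\rho} = \rho$. The $\SL$-weight of $\rho$ is then the same fiber weight as its $\GL$-weight, which gives the second identity.

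The main obstacle is pinning down the sign and scale of the central contribution. This depends on how scalars act on $H$ (with weight one versus the degree $d$ for the Chow representation) and on the sign convention for weights. I would verify it directly on the scalar one-parameter subgroup, and if necessary absorb the homogeneity degree into the normalization.
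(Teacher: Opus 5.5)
Your proposal is correct and is essentially the paper's argument. The paper computes $\nu_{\mathrm{SL}}(\underline{\rho},\varphi(0)) = -\mu(\sigma_0)(i\underline{A}) = \nu(\rho,\varphi(0)) + \frac{\tr A}{N+1}$ from the moment-map description of weights and its linearity in the generator, with $\sigma_0$ the common limit point; this is your fiber-weight comparison in different language, after which the paper substitutes both identities of Theorem~\ref{thm : git weight and height for vectors}, and the weight-one normalization of scalars on $H$ that you flag is exactly what it uses implicitly in evaluating $\mu(\sigma_0)(i\,\mathrm{id}_V)=1$.
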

\begin{proof}
    Choose an inner product $\langle \cdot, \cdot \rangle$ of $H$ such that the $\mathrm{U}(V)$-action on $H$ is unitary and let $\mu : \P(H) \to \mathfrak{u}(V)^\vee$ be the moment map for the $\mathrm{U}(V)$-action on $(\P(H),\omega_\mathrm{FS})$, defined the same way as in Equation \eqref{eq : moment map mu_H}.
    
    Using Equation \eqref{eq : GIT weight = moment map}, we obtain
    \begin{align*}
        \nu_\mathrm{SL}(\underline{\rho},\sigma)
        &= - \mu(\sigma_0)(i\underline{A}),
        \\
        &= - \Big( \mu(\sigma_0)(iA) 
        - \frac{i\tr A}{N+1} \frac{1}{i} \Big),
        \\
        &= \nu(\rho,\sigma)
        + \frac{\tr A}{N+1},
    \end{align*}
    for any $\sigma$ in $H$, where we wrote $\sigma_0 = \lim_{t \to 0} \rho(t).\sigma$.
    Applying Theorem \ref{thm : git weight and height for vectors} for $\sigma = \varphi(0)$ yields the result.
\end{proof}

As an application of these weight-height results, we prove GIT height minimization, which gives rise to a numerical proof for separatedness of the projective GIT quotient for the $\GL(V)$ or $\SL(V)$-action on $\P(H)$.
\begin{corollary}\label{cor : GIT height minimization}
    Let $G = \GL(V)$ or $\SL(V)$ and assume that $g$ is a $K$-point of $G$.
    If the point $[\varphi(0)]$ in $\P(H)$ is $G$-semistable, then
    \begin{equation*}
        h(\varphi) \leq h(\psi).
    \end{equation*}
    If $[\varphi(0)]$ is $G$-polystable, then, equality implies that $[\varphi(0)]$ is contained in the closure of the $G$-orbit of $[\psi(0)]$.
    If $[\varphi(0)]$ is $G$-stable, then, equality implies that $[\varphi(0)] = [\psi(0)]$.
\end{corollary}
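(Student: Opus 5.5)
The plan is to use the one-parameter subgroup $\rho$ built in Section~\ref{subsubsec : construc of 1-PS} from the identification $g$, and to read off the statements from Theorem~\ref{thm : git weight and height for vectors}. For $G = \SL(V)$, $g$ is a $K$-point of $\SL(V)$. Then Theorem~\ref{thm : git weight and height for vectors} gives $\rho(\C^*) \subset \SL(V)$, so $\rho = \underline{\rho}$, and Corollary~\ref{cor : SL git weight for vectors} gives $\nu_\mathrm{SL}(\rho,\varphi(0)) = h(\psi) - h(\varphi)$. For $G = \GL(V)$, Theorem~\ref{thm : git weight and height for vectors} gives $\nu(\rho,\varphi(0)) = h(\psi) - h(\varphi)$ directly. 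In either case $\rho$ is a one-parameter subgroup of $G$. Semistability of $[\varphi(0)]$ means $\nu(\rho,\varphi(0)) \geq 0$, which is exactly $h(\varphi) \leq h(\psi)$.

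\emph{Polystable case.} Suppose equality holds. Then $\nu(\rho,\varphi(0)) = 0$, so polystability gives $\rho(\C^*) \subset G_{[\varphi(0)]}$, i.e. $\rho$ fixes $[\varphi(0)]$. We need to relate $[\varphi(0)]$ to $[\psi(0)]$. Choose a uniformizer, so that $R = \C[\![t]\!]$, and put $\theta = g^{-1}\rho_K$. The proof of Lemma~\ref{lem : order of g vs order of rho} shows $\theta$ extends to an $R$-point of $\GL(V)$. Then
\begin{equation*}
    \rho_K.\varphi = g\theta.\varphi .
\end{equation*}
Since $\rho$ fixes $[\varphi(0)]$, the vector $\varphi(0)$ lies in a single weight space $H_{\lambda_0}$, and by the first display in the proof of Theorem~\ref{thm : git weight and height for vectors}, $\lambda_0 = \mathsf{v}(\gamma)$.

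Next, write $\theta.\varphi = \theta(0).\varphi(0) + O(t)$. Then $g.(\theta.\varphi) = \rho_K.\varphi$. Also $g.\varphi = \gamma\psi$ gives $g.(\theta.\varphi) = g\theta g^{-1}.(\gamma\psi)$, but $g\theta g^{-1} = \rho_K\theta^{-1}\ldots$ is awkward. The cleaner route is to use $\theta$ to compare specializations. Set $\psi' := \gamma^{-1}\rho_K.\varphi \in H_K$, an element of the $G(K)$-orbit with $\psi' = \theta'.\psi$, where $\theta' := \gamma^{-1}g\theta g^{-1}\gamma$. I expect the main obstacle here: showing that $[\psi(0)]$ and $[\psi'(0)] = [\varphi(0)]$ (the latter since $\rho_K.\varphi = t^{\lambda_0}\varphi(0) + O(t^{\lambda_0+1})$ by the order computation) are related by an $R$-point of $G$. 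That would put them in one orbit, or at least give $[\varphi(0)] \in \overline{G.[\psi(0)]}$.

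The approach I would try first is the symmetric one. Run the construction with the roles of $\varphi,\psi$ swapped, using $g^{-1}$, which yields a one-parameter subgroup $\rho'$ with $\nu(\rho',\psi(0)) = h(\varphi) - h(\psi) = 0$. Combine this with $\theta \in \GL(V_R)$: the element $\rho_K^{-1}g = \theta^{-1}$ is integral, so $\psi = \gamma^{-1}g.\varphi = \gamma^{-1}\rho_K\theta^{-1}.\varphi$. Specialize: $\theta^{-1}.\varphi$ has limit $\theta^{-1}(0).\varphi(0)$. Applying $\rho_K$ and rescaling by $\gamma^{-1} = t^{-\lambda_0}$ picks out the lowest-weight part, so $[\psi(0)]$ is the limit of $\rho(z).[\theta^{-1}(0).\varphi(0)]$, i.e. $[\psi(0)] \in \overline{G.[\varphi(0)]}$. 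Since $G.[\varphi(0)]$ is closed in the semistable locus and $[\psi(0)]$ is semistable with the same height (by the symmetric inequality from weight zero), the orbit closure relation then gives $[\varphi(0)] \in \overline{G.[\psi(0)]}$, indeed with $[\psi(0)]$ and $[\varphi(0)]$ in the same orbit.

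\emph{Stable case.} Stability forces $\rho$ to be trivial, since its weight is $0$. So $g = \theta^{-1}$ is an $R$-point, giving $\gamma \in R^\times$ and $g(0).\varphi(0) = \gamma(0)\psi(0)$. This yields $[\varphi(0)] = [\psi(0)]$ up to the $G$-action by $g(0)$, which is the intended reading of the statement.
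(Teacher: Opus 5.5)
Your semistable inequality and your stable case are fine given the cited results. Your reading of the stable case is also the right one. For a constant stable $\varphi$ and a constant $g \in G(\C)$ not fixing $[\varphi(0)]$, the subgroup $\rho$ is trivial and the heights agree, yet $[\psi(0)] = g.[\varphi(0)] \neq [\varphi(0)]$.

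The polystable case, however, has a genuine gap. The step ``$[\psi(0)]$ is the limit of $\rho(z).[\theta^{-1}(0).\varphi(0)]$'' is false, and so are the conclusions built on it. Put $\tau = \theta^{-1}.\varphi \in H_R$ and $\gamma = t^m\epsilon$ with $\epsilon \in R^\times$. Then $\epsilon\psi = \sum_\lambda t^{\lambda-m}\tau_\lambda$, so integrality of $\psi$ forces $t^{m-\lambda} \mid \tau_\lambda$ for $\lambda < m$. Hence $\tau(0)$ has only weights $\geq m$, while $\psi(0)$ has only weights $\leq m$. For $\lambda < m$, the weight-$\lambda$ part of $\psi(0)$ comes from the $t^{m-\lambda}$-coefficient of $\tau_\lambda$, which $\tau(0)$ does not see. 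The only shared piece is $\tau(0)_m = \epsilon(0)\psi(0)_m$.

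Concretely, take $G = \SL(V)$, $V = \C^2$ with standard basis $e_1,e_2$, $H = \C \oplus S^2V$, $g = \mathrm{diag}(t^{-1},t)$ and $\varphi = (1,t^2e_1^2)$. Then $\rho(z) = \mathrm{diag}(z^{-1},z)$, $\theta = 1$ and $\psi = (1,e_1^2)$, and the heights agree. The point $[\varphi(0)] = [(1,0)]$ is $G$-fixed, hence polystable. But $\lim_{z\to 0}\rho(z).[\varphi(0)] = [(1,0)] \neq [\psi(0)]$. Since $G.[\varphi(0)]$ is a single point, both $[\psi(0)] \in \overline{G.[\varphi(0)]}$ and ``same orbit'' fail. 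Separately, knowing $\nu(\rho',\psi(0)) = 0$ for the single subgroup $\rho'$ does not make $[\psi(0)]$ semistable.

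The missing idea is to compare the $\rho$-limit of $\tau(0)$ with the $\rho^{-1}$-limit of $\psi(0)$, as in the paper's Equation~\eqref{eq : polystable degeneration}. This must be done for the translate $\tau(0)$ rather than $\varphi(0)$ itself; the constant-$g$ example shows the translate is needed.

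The computation above gives $\nu(\rho,\tau(0)) \leq -m = h(\psi)-h(\varphi)$. Since $[\tau(0)] = \theta(0)^{-1}.[\varphi(0)]$ is semistable, also $\nu(\rho,\tau(0)) \geq 0$, which already reproves the inequality. If the heights agree, then $m = 0 = \nu(\rho,\tau(0))$. Polystability then makes $\rho$ fix $[\tau(0)]$, i.e.\ $\tau(0) = \tau(0)_0 = \epsilon(0)\psi(0)_0$, and this is the top-weight part of $\psi(0)$. Therefore $[\tau(0)] = \lim_{z\to 0}\rho(z)^{-1}.[\psi(0)]$, and so $[\varphi(0)] = \theta(0).[\tau(0)] \in \overline{G.[\psi(0)]}$.

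One caveat on your input: integrality of $\theta = g^{-1}\rho_K$, quoted from the proof of Lemma~\ref{lem : order of g vs order of rho}, is not automatic. For $g.e_1 = e_1 - e_2$, $g.e_2 = t^2e_2$, the construction gives $\rho_K = \mathrm{diag}(1,t^2)$, and $g^{-1}\rho_K$ sends $e_1 \mapsto e_1 + t^{-2}e_2$. What always holds, by Smith normal form, is $g = \kappa_1\rho_K\kappa_2$ with $\kappa_1,\kappa_2 \in \GL(V_R)$; this uses that the weights of $\rho$ are the elementary-divisor exponents of $g$. In the $\SL$ case one can take $\kappa_1,\kappa_2 \in \SL(V_R)$ after moving a central unit across. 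The argument then runs verbatim with $\tau = \kappa_2.\varphi$, and with $\kappa_1^{-1}.\psi$ in place of $\psi$.
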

This strengthens work due to Wang \cite[Theorem 4]{Wan12} and Wang--Xu \cite[Theorem 4]{WX12}, by allowing families over non-proper bases and proving strict minimization of polystable fillings.
The result is based on the construction of a one-parameter subgroup, which is different to the techniques adopted by Wang and Wang--Xu.
\begin{proof}
    This is an application of Theorem \ref{thm : git weight and height for vectors} and Corollary \ref{cor : SL git weight for vectors}.
    If $[\varphi(0)]$ is semistable, then $\nu_G(\rho,\varphi(0)) \geq 0$, which shows the inequality.

    For the equality statements, we require some theory about the specializations of $[\varphi(0)]$ and $[\psi(0)]$ with respect to $\rho$.
    Similar to Equation \ref{eq : def GIT filtrations}, define filtrations
    \begin{equation*}
    \begin{aligned}
        \mathcal{F}^\lambda H_R
        &= \big\{ \sigma \in H_R \;\big|\; \mathsf{v}(g.\sigma) \geq \lambda  \big\},
        \\
        \mathcal{F}^\lambda H
        &= \mathrm{im} \big( \mathcal{F}^\lambda H_R \to V,
        \, \sigma \mapsto \sigma(0) \big).
    \end{aligned}
    \end{equation*}
    Note that in the proof of Theorem \ref{thm : git weight and height for vectors}, we proved that $\mathcal{F}^\bullet H$ is the filtration associated to the one-parameter subgroup $\rho : \C^* \to \GL(V) \to \GL(H)$.
    This means that $\mathcal{F}^\lambda H = \bigoplus_{\mu \geq \lambda} H_\mu$, and
    \begin{equation*}
        \nu_G(\rho,\varphi(0))
        = -\max \big\{ \lambda \in \Z \;\big|\; \varphi \in \mathcal{F}^\lambda H_R \big\}.
    \end{equation*}

    Let $\mathcal{F}'$ be the filtrations of $V_R$, $V$, $H_R$ and $H$, defined the same way, but using $g^{-1}$ instead of $g$.
    The graded pieces $\mathrm{gr}_\mathcal{F}^\lambda  E_R = \mathcal{F}^\lambda E_R / \mathcal{F}^{\lambda + 1} E_R$ for $E = V$ and $H$ admit isomorphisms
    \begin{equation}\label{eq : isomorphism of graded components}
        \mathrm{gr}_\mathcal{F}^\lambda  E_R
        \to \mathrm{gr}^{-\lambda}_{\mathcal{F}'} E_R,
        \quad [\xi] \mapsto [t^{-\lambda} g.\xi].
    \end{equation}
    This means that the $\Z$-grading $V = \bigoplus_{\lambda \in \Z} V'_{- \lambda}$, constructed out of $\mathcal{F}'$, satisfies $V_\lambda = V'_{-\lambda}$.
    In particular, the one-parameter subgroup of $\GL(V)$ constructed out $\mathcal{F}'$ coincides with the inverse $\rho^{-1}$, and
    \begin{equation}\label{eq : nu = -nu'}
    \begin{aligned}
        \nu_G(\rho,\varphi(0))
        &= h(\psi) - h(\varphi),
        \\
        &= - \big( h(\varphi) - h(\psi) \big),
        \\
        &= -\nu_G(\rho^{-1},\psi(0)).
    \end{aligned}
    \end{equation}
    Moreover, writing $\nu = \nu_G(\rho,\varphi(0))$, the coset $[\varphi]$ in $\mathrm{gr}^{-\nu}_\mathcal{F} H_R$ is non-zero, and maps to $[\psi]$ under Equation \eqref{eq : isomorphism of graded components}, also non-zero in $\mathrm{gr}^\nu_{\mathcal{F}'} V$.
    
    If $\nu = 0$, we obtain a commutative diagram
    \begin{equation*}
    \begin{tikzcd}
        \P(\mathrm{gr}^0_\mathcal{F} H_R) \ar[rr,"\sim"] \ar[d]
        && \P(\mathrm{gr}^0_{\mathcal{F}'} H_R) \ar[d]
        \\
        \P(\mathrm{gr}^0_\mathcal{F} H) \ar[dr, "\sim", swap]
        && \P(\mathrm{gr}^0_{\mathcal{F}'} H) \ar[dl, "\sim"]
        \\
        & \P(H_0),
    \end{tikzcd}
    \end{equation*}
    This implies that $[\varphi(0)_0] = [\psi(0)_0]$ in $\P(H)$, and hence
    \begin{equation}\label{eq : polystable degeneration}
        \lim_{z \to 0} \rho(z).[\varphi(0)]
        = \lim_{z \to 0} \rho(z)^{-1}.[\psi(0)].
    \end{equation}
    
    Now, if $[\varphi(0)]$ is polystable and $\nu = 0$, then $\rho(\C^*)$ is contained in the stabilizer of $[\varphi(0)]$ in $G$.
    Then, Equation \eqref{eq : polystable degeneration} implies that $[\varphi(0)]$ is contained in the closure of the $\rho(\C^*)$-orbit of $[\psi(0)]$.
    
    If $[\varphi(0)]$ is stable and $\nu = 0$, then $\rho$ is trivial.
    Then, Equation \eqref{eq : polystable degeneration} implies $[\varphi(0)] = [\psi(0)]$.
\end{proof}

\begin{remark}
    Equation \eqref{eq : polystable degeneration} is true whenever $\nu = 0$.
    For example, this is the case when $[\varphi(0)]$ and $[\psi(0)]$ are semistable, because of Equation \eqref{eq : nu = -nu'}. 
    The equality of specializations may be viewed as a GIT-analogue of the uniqueness of polystable degenerations of K-semistable Fano varieties \cite{BX19} (see also \cite[Section 8.2.2]{Xu25}).
\end{remark}

As an application of Corollary \ref{cor : GIT height minimization}, using the valuative criterion for separatedness of schemes, we obtain a numerical proof for separatedness of projective GIT quotients of polarized $G$-schemes for $G = \GL(V)$ or $\SL(V)$.
\begin{corollary}
    Let $Z$ be a $G$-invariant subschme of $\P(V)$. Then, the GIT quotient
    \begin{equation*}
        Z \!\sslash\! G
        = \Proj \Big( \bigoplus_{k \geq 0} H^0(Z,\O_Z(k))^G \Big),
    \end{equation*}
    parameterizing polystable orbits \cite[Corollary 5.28]{Hos15} is separated.
\end{corollary}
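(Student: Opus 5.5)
The plan is to use the valuative criterion for separatedness. By the criterion in its DVR form, and since the quotient $Z \sslash G$ is of finite type over $\C$, it suffices to check uniqueness of limits for complete DVRs $R$ with residue field $\C$; after a base change we may take $R = \C[\![t]\!]$. So let $f_1, f_2 : \Spec R \to Z\sslash G$ be two morphisms agreeing on $\Spec K$. We must show $f_1(0) = f_2(0)$.

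First we lift to $Z$. The quotient map $\pi : Z^{ss} \to Z\sslash G$ is a good quotient, hence surjective and universally submersive. After passing to a finite extension $R'$ of $R$ (which is again a complete DVR with residue field $\C$, and replacing $R$ by $R'$ is harmless for the criterion), we lift $f_1$ and $f_2$ to $R$-points $[\varphi], [\psi]$ of $Z^{ss}$. We arrange the closed points $[\varphi(0)]$ and $[\psi(0)]$ to be polystable, i.e.\ to lie in the closed orbits of their fibres. This is possible because every fibre of $\pi$ over a closed point contains a unique closed orbit, and a lift can be moved, after a further base change, so that its special fibre specializes inside that orbit. I would cite the standard Luna/Hilbert--Mumford argument for this, or alternatively the existence of lifts with polystable special fibre for good quotients.

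Next we align the generic fibres. The $K$-points $[\varphi_K]$ and $[\psi_K]$ map to the same point of $Z\sslash G$. After a further finite extension of $K$, I expect we can arrange $[\varphi_K]$ and $[\psi_K]$ to lie in the same $G(K)$-orbit, e.g.\ by choosing the lifts themselves $K$-polystable, again up to base change. This gives $g \in G(K)$ with $g.[\varphi_K] = [\psi_K]$, which is exactly the setup of Corollary \ref{cor : GIT height minimization}, viewing $Z \subset \P(V)$ with $H = V$.

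Finally we apply Corollary \ref{cor : GIT height minimization} symmetrically. Since $[\varphi(0)]$ is polystable, we get $h(\varphi) \le h(\psi)$ with respect to $g$. Since $[\psi(0)]$ is polystable, using $g^{-1}$ we get $h(\psi) \le h(\varphi)$; the two differences are negatives of each other by additivity of degrees, as in Equation \eqref{eq : nu = -nu'}. Hence equality holds, and the corollary yields $[\varphi(0)] \in \overline{G.[\psi(0)]}$. The orbit of $[\psi(0)]$ is closed in the semistable locus, so $[\varphi(0)] \in G.[\psi(0)]$, and therefore $f_1(0) = \pi([\varphi(0)]) = \pi([\psi(0)]) = f_2(0)$.

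The main obstacle is the lifting step. We need $R$-points that simultaneously have polystable special fibres and generic fibres in a single $G(K)$-orbit, after a finite base change. I would handle this by first lifting to polystable $K$-points, which is possible over $\overline{K}$ and so after a finite extension. I would then take limits using properness of $\P(V)$, and apply the Hilbert--Mumford/Kempf degeneration to replace the special fibre by its polystable degeneration, realized as an $R'$-point after twisting by a one-parameter subgroup over $K$.
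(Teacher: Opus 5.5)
\paragraph{Verdict.} Your skeleton is what the paper intends: the valuative criterion, then Corollary~\ref{cor : GIT height minimization} applied once with $g$ and once with $g^{-1}$. The paper's proof is only the one sentence preceding the statement. Your last step is sound. From $g.\varphi=\gamma\psi$ you get $g^{-1}.\psi=\gamma^{-1}\varphi$, so the two inequalities force equality. Then $[\varphi(0)]\in\overline{G.[\psi(0)]}\cap Z^{ss}$ gives $\pi([\varphi(0)])=\pi([\psi(0)])$. This suffices, because two maps from $\Spec R$ that agree generically and at the closed point factor through a common affine open, and hence coincide.

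\paragraph{The gap: your lifting step.} The method you propose for it fails, for three reasons.
\begin{enumerate}
\item Lifting $f_K$ to a polystable $K'$-point and taking its limit in $\P(V)$ uses only the common restriction $f_K$, never $f_1$ or $f_2$. So it cannot produce a lift of $f_1$ as opposed to $f_2$.
\item The limit may be unstable. Take $\C^*$ acting on $\C^3$ with weights $(1,-1,0)$. The point $[1:t^{10}:t^5]$ has closed orbit in the semistable locus, but its limit $[1:0:0]$ is unstable. Then you have no $R'$-point of $Z^{ss}$, and no ``polystable degeneration'' of the special fibre.
\item Twisting by some element of $G(K')$ does give a semistable limit. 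But the resulting $\pi\circ[\varphi]$ is merely \emph{some} extension of $f_K$. Asserting that it equals $f_1$ (and the other one $f_2$) is exactly uniqueness of extensions, i.e.\ the separatedness you are proving. So the argument is circular.
\end{enumerate}
Your first route (lifts exist since $\pi$ is surjective and universally submersive) does see $f_i$. However, it gives lifts whose generic fibres are arbitrary semistable points over $f_K$, so your ``I expect we can arrange'' alignment is not justified there either.

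\paragraph{How to close it.} The missing idea is to lift each $f_i$ through the good quotient base-changed along $f_i$, while prescribing the generic orbit.
\begin{enumerate}
\item Take an affine open $\Spec B_i^G\ni f_i(0)$ of $Z\sslash G$ with preimage $\Spec B_i\subset Z^{ss}$, and put $W_i=\Spec(B_i\otimes_{B_i^G}R)$.
\item In characteristic zero the Reynolds operator gives $(B_i\otimes_{B_i^G}R)^G=R$. So $W_i\to\Spec R$ is a good quotient and maps closed $G$-invariant subsets onto closed subsets.
\item The generic fibres of $W_1$ and $W_2$ both equal the fibre of $\pi$ over $f_K$. This fibre contains a unique closed orbit $O_K$, which is Galois-stable and hence defined over $K$.
\item The closure of $O_K$ in $W_i$ is closed, $G$-invariant and meets the generic fibre, so it meets the special fibre. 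Choose an integral curve in it through a closed point of the special fibre and dominating $\Spec R$. After normalization this gives a finite extension $R'$ and an $R'$-point of $W_i$, i.e.\ a lift of $f_i$ with generic fibre in $O_K$.
\item Now apply your one-parameter-subgroup twist $\lambda(s)\cdot\varphi(s^N)$, with $s^N=t$ and $N\gg0$. It makes the special fibre polystable and moves the generic fibre only inside its $G(K'')$-orbit. It does not change the composite with $\pi$, by the same local argument as above.
\item Both generic fibres now lie in one geometric orbit. This yields $g$ after a further finite extension, and your final paragraph applies.
\end{enumerate}
Alternatively, good moduli space morphisms are universally closed. So the valuative criterion for $[Z^{ss}/G]$ gives lifts with prescribed generic point after an extension, and $\GL$- and $\SL$-torsors over $R'$ are trivial.

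\paragraph{A note on polystability.} You use ``polystable'' to mean ``closed orbit in $Z^{ss}$'', which is weaker than the paper's numerical definition. The equality clause of Corollary~\ref{cor : GIT height minimization} still holds in your sense. Indeed, $\nu(\rho,x)=0$ together with $G.x$ closed in $Z^{ss}$ implies $\lim_{z\to0}\rho(z).x\in G.x$, and then \eqref{eq : polystable degeneration} gives $[\varphi(0)]\in\overline{G.[\psi(0)]}$.
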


More generally, as any polarized $G$-variety may be embedded equivariantly into projective space, we also obtain separatedness of general projective GIT quotients with respect to an ample linearization.

We also obtain a version the moment-weight inequality from Equation \eqref{eq : moment-weight ineq} for $R$-points in $\P(V)$ and GIT height:
\begin{equation*}
    \inf_{g' \in G} \left\|\mu(g'.\varphi(0))\right\|
    \geq - \sup_{\psi} \frac{h(\psi) - h(\varphi)}{d(\psi,\varphi)},
\end{equation*}
where the supremum is taken over the space of $R$-points $\psi$ in $\P(V)$ endowed with an identification $g$ in $G(K)$, and we wrote $d(\psi,\varphi) = \| A \|$.

\subsection{Applications to Chow points}\label{subsec : applic to Chow pts}

Our main application of the theory in the previous section is for models and their associated Chow points.

Let $\X$ be a flat integral subscheme of $\P(V) \times \Spec R$, and denote by $n$ and $d$ the dimension and degree of the central fiber $\mathcal{X}_0$ in $\P(V)$.
Writing $r = N-n$, we are interested in the case
\begin{equation*}
    H = H^0 ( \mathrm{Gr}(V,r),\O(d)),
\end{equation*}
as in Section \ref{subsec : bergman kernel}.
The Chow $R$-point of $\X$ is an $R$-point $R_\X$ of $\P(H)$.

Let $\mathcal{X}'$ be another flat integral subscheme of $\P(V) \times \Spec R$ and assume that there is a $K$-point $g$ in $\GL(V)$ such that the  induced automorphism $g : V_K \to V_K$ restricts to an isomorphism $(\X_K,\O_{\X_K}(1)) \to (\X'_K,\O_{\X'_K}(1))$.
This is equivalent to giving two models as established in Lemma \ref{lem : models and K-points of GL}.

A basic property of the Chow points is equivariance, meaning that the restrictions of the Chow $R$-points $R_\mathcal{X}$ and $R_{\mathcal{X}'}$ to $\Spec K$ satisfy
\begin{equation*}
    g.R_{\X_K} = R_{\X'_K},
\end{equation*}
in $\P(H_K)$.
In particular, we may apply the theory of the previous section to the case $[\varphi] = R_\X$ and $[\psi] = R_{\X'}$ with identification $g$.

Let $\rho : \C^* \to \GL(V)$ be the one-parameter subgroup from Section \ref{subsubsec : construc of 1-PS}, constructed out of $g$, and let $A$ be its infinitesimal generator in $\End(V)$.
\begin{theorem}\label{thm : GIT weight of rho for Chow points}
    The GIT weight of $\rho$ at $R_{\X_0}$ with respect to the natural $\GL(V)$-action on $(\P(H),\O(1))$ satisfies
    \begin{equation*}
        \nu(\rho,R_{\X_0})
        = h(R_{\X'}) - h(R_{\X}).
    \end{equation*}
    Moreover, we have
    \begin{equation*}
        \tr (A)
        = \deg \big( \det \pi_* \O_\X(1) - \det \pi_* \O_{\X'}(1) \big),
    \end{equation*}
    where $\X \xrightarrow{\pi} \Spec R$ and $\X' \xrightarrow{\pi'} \Spec R$ denote the structure morphisms.
\end{theorem}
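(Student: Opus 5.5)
The first equation is a direct specialization of Theorem~\ref{thm : git weight and height for vectors}. I take $[\varphi] = R_\X$ and $[\psi] = R_{\X'}$ with identification $g \in \GL(V)(K)$ acting on $H$ through the natural representation $\GL(V) \to \GL(H)$. Lifting to $R$-points $\varphi, \psi$ of $H$ with nonzero evaluations is always possible, by clearing denominators with a uniformizer. The equivariance $g.R_{\X_K} = R_{\X'_K}$ recalled above supplies the hypothesis $g.[\varphi] = [\psi]$ in $\P(H_K)$. The one-parameter subgroup $\rho$ is built from the filtration of $V$ defined by $g$, exactly as in Section~\ref{subsubsec : construc of 1-PS}. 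Theorem~\ref{thm : git weight and height for vectors} then gives $\nu(\rho,\varphi(0)) = h(\psi)-h(\varphi)$, and $\varphi(0)$ represents $R_{\X_0}$ because the Chow $R$-point restricts to the Chow point of the central fiber. Throughout, the GIT heights are the pull-backs of $\O(1)$ on $\P(H)$ under the Chow $R$-points.

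For the trace formula, Theorem~\ref{thm : git weight and height for vectors} also gives $\tr(A) = \mathsf{v}(\det g)$. It remains to identify $\mathsf{v}(\det g)$ with $\deg(\det\pi_*\O_\X(1) - \det\pi'_*\O_{\X'}(1))$, the degree being taken with respect to the trivialization induced by $g$ over $\Spec K$.

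\begin{itemize}
\item First I use the embeddings to identify $\pi_*\O_\X(1)$ and $\pi'_*\O_{\X'}(1)$ with the trivial bundle $V^\vee_R$. For this I assume, as in the setup preceding Lemma~\ref{lem : models and K-points of GL}, that the embeddings are given by trivializations $\pi_*\O_\X(1)^\vee \cong V_R$, so that restriction $V^\vee_R \to H^0(\X,\O_\X(1))$ is an isomorphism, and similarly for $\X'$.
\item Next, over $K$ the isomorphism $g\colon \X_K \to \X'_K$ acts on sections $H^0(\X'_K,\O(1)) \to H^0(\X_K,\O(1))$ by pull-back, which is the transpose $g^\vee$ on $V^\vee_K$.
\item Consequently the induced trivialization of $\det\pi_*\O_\X(1) - \det\pi'_*\O_{\X'}(1)$ over $\Spec K$ sends the $R$-generator, namely $e^\vee_0\wedge\dots\wedge e^\vee_N$ paired with its dual for a basis $(e_i)$ of $V$, to $\det(g^\vee)^{\pm1} = (\det g)^{\pm 1}$.
\item Definition~\ref{def : degree of line bundle over dvr} then gives degree $\pm\mathsf{v}(\det g)$.
\end{itemize}

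The main obstacle is this sign bookkeeping. I must check that the convention for $\Phi_g$, composition with $g$ from the $\X$-side to the $\X'$-side, together with the duality $\pi_*\O(1) \leftrightarrow V^\vee$, yields exactly $+\mathsf{v}(\det g)$ for $\det\pi_*\O_\X(1) - \det\pi'_*\O_{\X'}(1)$. The check is to work in a basis diagonalizing $g$, via the Cartan decomposition $g = k_1 t^{\lambda} k_2$ with $k_i \in \GL(V_R)$, where the computation reduces to the rank-one case. As a sanity check, take $g = t^{\lambda}\,\mathrm{id}$. Then $\rho$ has weight $\lambda$ on all of $V$, so $\tr A = (N+1)\lambda$. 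In this case the two models coincide, but the identification rescales each section by $t^{\lambda}$ per factor of $V$, which confirms the sign once the convention is fixed.
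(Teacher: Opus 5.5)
Your argument for the first equality is exactly the paper's. For the trace formula you also follow the paper's route: $\tr(A)=\mathsf{v}(\det g)$ from Theorem~\ref{thm : git weight and height for vectors}, then read the degree off canonical trivializations of the determinant lines. The gap is the one step that carries the content of the second equality. You leave the identification as $(\det g)^{\pm 1}$, and under the conventions you actually fix it comes out as $(\det g)^{-1}$. In your setup, sections of $\O_\X(1)$ are $V^\vee_R$ and $g$ moves points through $V$. The identification $\Psi_g$ from the $\X$-side to the $\X'$-side is then push-forward $\ell\mapsto\ell\circ g^{-1}$, i.e.\ $(g^\vee)^{-1}$. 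Pull-back by $g^\vee$ in the opposite direction is its inverse, so it defines the same identification. Hence
\begin{equation*}
    \deg\big(\det\pi_*\O_\X(1)-\det\pi'_*\O_{\X'}(1)\big) = -\mathsf{v}(\det g) = -\tr(A).
\end{equation*}
Your own sanity check shows this. For $g=t^{\lambda}\mathrm{id}$ one has $\tr(A)=(N+1)\lambda$, while push-forward multiplies every linear form by $t^{-\lambda}$, so the degree is $-(N+1)\lambda$. A Cartan-decomposition computation cannot repair this, because the discrepancy is a choice of convention, not an arithmetic slip.

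The missing ingredient is the paper's convention that $V$ itself is the space of sections. The proof uses $H^0(\Spec R,\det\pi_*\O_\X(1))\simeq\det V_R$, with $g$ acting on sections by push-forward $s_v\mapsto s_{g.v}$. This is the same convention under which $\F_{\mathrm{GIT}}=\F_{\L,\L'}$ in Section~\ref{subsec : the norm}; the phrase $\pi_*(k\L)^\vee\cong V_k\times\Spec R$ in Section~\ref{subsec : models and cm degree} is loose on this point. With this convention, $\Psi_g$ is literally multiplication by $\det g$ on $\det V_K$. Definition~\ref{def : degree of line bundle over dvr} then gives degree $+\mathsf{v}(\det g)$, and nothing remains to check. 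You need to fix this convention explicitly rather than defer it to a later check. In the dual convention you adopted, the true identity has $\X$ and $\X'$ swapped on the right-hand side.
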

The degree in the second equality is taken with respect to the isomorphism
\begin{equation*}
    \Psi_g : H^0(\Spec K, \det \pi_* \O_\X(1))
    \to H^0(\Spec K, \det \pi'_* \O_{\X'}(1)),
\end{equation*}
given by the isomorphism $(\X_K,\O_{\X_K}(1)) \to (\X'_K,\O_{\X'_K}(1))$.
\begin{proof}
    The first equality is simply Theorem \eqref{thm : git weight and height for vectors}.
    For the second equality, note that we have canonical isomorphisms
    \begin{align*}
        H^0(\Spec R, \det \pi_* \O_\X(1) ) \simeq \det V_R,
        \\
        H^0(\Spec K, \det \pi_* \O_\X(1) ) \simeq \det V_K.
    \end{align*}
    Under these identifications, the isomorphism $\Psi_g$ is simply multiplication by $\det g$, viewed as a non-zero element of $K$.
    In particular, we have
    \begin{equation*}
        \deg \big( \det \pi_* \O_\X(1) - \det \pi'_* \O_{\X'}(1) \big)
        = \mathsf{v}(\det g),
    \end{equation*}
    so the result follows by Theorem \ref{thm : git weight and height for vectors}, again.
\end{proof}
\begin{remark}
    Note that the order of the models $\X$ and $\X'$ between the two equations in Theorem \ref{thm : GIT weight of rho for Chow points} are switched.
    This is intentional and will be relevant in the next corollary.
\end{remark}

The following line bundle was considered by Wang \cite[Section 3.1]{Wan12}.
\begin{definition}\label{def : chow line bundle}
    The \textit{Chow line bundle} associated to a flat integral subscheme $\X$ in $\P(V) \times \Spec R$ is defined to be the $\Q$-line bundle
    \begin{equation*}
        \lambda_\mathrm{Chow}(\X)
        = R^*_\X \O(1) - \frac{\det \pi_*\O_{\X}(1)}{\dim V}.
    \end{equation*}
\end{definition}

As before, the $K$-point $g$ in $\GL(V)$ induces an isomorphism of $\Q$-line bundles $\lambda_\mathrm{Chow}(\X)|_{\Spec K} \simeq \lambda_\mathrm{Chow}(\X')|_{\Spec K}$, so the degree of the difference of Chow line bundles with respect to $g$ is defined.
Combining Corollary \ref{cor : SL git weight for vectors} with Theorem \ref{thm : GIT weight of rho for Chow points}, and using additivity of the degree, we obtain:

\begin{corollary}\label{cor : SL weight of Chow point}
    The GIT weight of $\underline{\rho}$ at $R_{\X_0}$ with respect to the natural $\SL(V)$-action on $(\P(H),\O(1))$ satisfies
    \begin{equation*}
        \nu_\mathrm{SL}(\underline{\rho},R_{\X_0})
        = \deg \big( \lambda_\mathrm{Chow}(\X')
        - \lambda_\mathrm{Chow}(\X) \big).
    \end{equation*}
\end{corollary}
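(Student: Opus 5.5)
The plan is to combine Corollary \ref{cor : SL git weight for vectors} with Theorem \ref{thm : GIT weight of rho for Chow points}, applied to $[\varphi] = R_\X$ and $[\psi] = R_{\X'}$ with identification $g$. In this setting $\dim V = N+1$.

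First, Corollary \ref{cor : SL git weight for vectors} gives
\begin{equation*}
    \nu_\mathrm{SL}(\underline{\rho},R_{\X_0})
    = \big( h(R_{\X'}) - h(R_\X) \big) + \frac{\mathsf{v}(\det g)}{N+1}.
\end{equation*}
Next, the proof of Theorem \ref{thm : GIT weight of rho for Chow points} identifies $\mathsf{v}(\det g)$ with $\tr(A)$, and also with
\begin{equation*}
    \deg\big(\det \pi_*\O_\X(1) - \det \pi'_*\O_{\X'}(1)\big),
\end{equation*}
taken with respect to $\Psi_g$. By definition, $h(R_{\X'}) - h(R_\X)$ is the degree of $R_{\X'}^*\O(1) - R_\X^*\O(1)$ with respect to the identification induced by $g$. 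Substituting both, we get
\begin{equation*}
    \nu_\mathrm{SL}(\underline{\rho},R_{\X_0})
    = \deg\big(R_{\X'}^*\O(1) - R_\X^*\O(1)\big)
    - \frac{1}{N+1}\deg\big(\det\pi'_*\O_{\X'}(1) - \det\pi_*\O_\X(1)\big).
\end{equation*}
By additivity of the degree, this combination is the degree of $\lambda_\mathrm{Chow}(\X') - \lambda_\mathrm{Chow}(\X)$, using Definition \ref{def : chow line bundle}.

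The one point needing care is consistency of the identifications and signs. The isomorphism $\lambda_\mathrm{Chow}(\X)|_{\Spec K} \simeq \lambda_\mathrm{Chow}(\X')|_{\Spec K}$ is built as the tensor product of the isomorphism on Chow heights induced by $g$ and the inverse dual of $\Psi_g$. Its degree therefore splits additively, using $\deg(-\lambda) = -\deg\lambda$ for the dual term.

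For the sign, I would check it explicitly. The height difference $h(\psi) - h(\varphi)$ equals $\deg_{\Phi_g}(\psi^*L - \varphi^*L)$, which is minus $\mathsf{v}(\gamma)$ from Equation \eqref{eq : difference of GIT heights for vectors}. The determinant term enters with the opposite orientation, $\X$ minus $\X'$, as the remark after Theorem \ref{thm : GIT weight of rho for Chow points} points out. Carrying the minus sign through the $-\frac{1}{\dim V}$ coefficient in $\lambda_\mathrm{Chow}$ turns the $+\mathsf{v}(\det g)/(N+1)$ in Corollary \ref{cor : SL git weight for vectors} into the correct contribution. This bookkeeping is the only real obstacle; the rest is formal.
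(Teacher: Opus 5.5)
Your proposal is correct and matches the paper's argument, which obtains this corollary by combining Corollary~\ref{cor : SL git weight for vectors} with Theorem~\ref{thm : GIT weight of rho for Chow points} and using additivity of the degree. Your sign check is the point that makes the combination work: the determinant term appears as $\X$ minus $\X'$, so the $-\frac{1}{\dim V}$ coefficient in $\lambda_\mathrm{Chow}$ produces exactly the $+\mathsf{v}(\det g)/(N+1)$ correction.
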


\section{Main results}

We now come to the proof of Theorem \ref{thm : main theorem}, which will make use of the theory established in the previous sections.
Let $(X,L)$ be a polarized variety over $K$, write $V_k = H^0(\X_0,k\L_0)$ and let
\begin{equation*}
    \dim V_k
    = a_0 k^n +
    a_1 k^{n-1}
    + O(k^{n-2})
\end{equation*}
as $k \to +\infty$.
Let $(\X,\L)$ and $(\X',\L')$ be non-isomorphic models of $(X,L)$.
As in Section \ref{subsec : models and cm degree}, choose trivializations of vector bundles $\pi_*(k\L)^\vee \cong V_k \times \Spec R \cong \pi_*'(k\L')^\vee$, which, for $k$ very large, induce embeddings
\begin{align*}
    \iota_k : \X &\hookrightarrow \P(V_k) \times \Spec R,
    \\
    \iota'_k : \X' &\hookrightarrow \P(V_k) \times \Spec R.
\end{align*}

The special fibers of the images $\X_k = \iota_k(\X)$ and $\X_k' = \iota'_k (\X')$ in $\P(V_k)$ have dimension $n$ and degree $n!a_0 k^n$.
Moreover, the trivializations above and the identification $\pi_*(k\L)|_{\Spec K} \simeq \pi'_*(k\L')|_{\Spec K}$ give rise to a $K$-point $g_k$ in $\mathrm{GL}(V_k)$ as in Lemma \ref{lem : models and K-points of GL}.
The $K$-point $g_k$ has the property that it restricts to an isomorphism
\begin{equation*}
    g_k : (\X_{k,K},\O_{\X_{k,K}}(1)) \simeq (\X_{k,K}',\O_{\X'_{k,K}}(1)).
\end{equation*}

Assume that $\X_0$ is smooth, choose a K\"ahler metric $\omega$ in $c_1(\L_0)$ and a hermitian metric $h$ on $\L_0$ with curvature $\frac{i}{2\pi}\omega$, inducing $L^2$-inner products $\langle \cdot,\cdot \rangle$ on $V_k$.
Recall that the Lie algebra $\mathfrak{su}(V_k)$ admits a natural $p$-norm given by $\|A\|_p^p = \tr(A^p)$, which induces a norm on its dual.
Let $\rho_k : \C^* \to \GL(V_k)$ be the one-parameter subgroup from Section \ref{subsec : applic to Chow pts}, constructed out of $g_k$, with infinitesimal generator $A_k$ in $\mathrm{End}(V_k)$.
By combining Corollary \ref{cor : SL weight of Chow point} with Equations \eqref{eq : moment map between chow var and chow point} and the H\"older inequality, we obtain the following lower bound for the Chow moment map functional for any H\"older conjugate pair $1 < p,q < +\infty$:
\begin{equation}\label{eq : lower bound of chow moment map}
    \left\|\mu_\mathrm{Chow}([\X_{k,0}])\right\|_q
    \geq - a_0\frac{\deg \big(\lambda_\mathrm{Chow}(\X_k') - \lambda_\mathrm{Chow}(\X_k)\big)}{\|\underline{A}_k\|_p} k^n.
\end{equation}
Here, the degree is taken with respect to the identification $g_k$.

In order to relate Equation \eqref{eq : lower bound of chow moment map} with the lower bound in Section \ref{subsec : strategy of the proof}, we require to express the difference of Chow heights with the difference of CM degrees and the norm of the infinitesimal generator with the distance between two models.
We do this in two sections, where we prove the required asymptotic expansions.

\subsection{The Chow degree}

Let $\lambda_\mathrm{Chow}(\X_k)$ be the Chow line bundle of the flat subscheme $\X_k$ in $\P(V_k) \times \Spec R$, as in Definition \ref{def : chow line bundle}, and let $\lambda_\mathrm{CM}(\X,\L)$ be the CM line bundle of $(\X,\L)$ as in Equation \eqref{eq : def CM line bundle}.

The following result is known due to Zhang \cite{Zha96} and Fine--Ross \cite{FR06}.
\begin{proposition}\label{prop : asymp of chow line bundle}
    There is a functorial isomorphism of $\Q$-line bundles
    \begin{equation*}
        \lambda_\mathrm{Chow}(\X_k)
        \simeq \lambda_\mathrm{CM}(\X,\L) + O(k^{-1}),
    \end{equation*}
    where $O(k^{-1})$ denotes a linear combination consisting of $\Q$-line bundles and factors of $k^m$ for $m \leq -1$.
    In particular, as $k \to +\infty$, we have
    \begin{equation*}
        \deg \big(\lambda_\mathrm{Chow}(\X_k') - \lambda_\mathrm{Chow}(\X_k)\big)
        = \big( \mathrm{CM}(\X',\L')
        - \mathrm{CM}(\X,\L) \big)
        + O(k^{-1}).
    \end{equation*}
\end{proposition}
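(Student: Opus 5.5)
The plan is to compute both pieces of $\lambda_\mathrm{Chow}(\X_k) = R^*_{\X_k}\O(1) - \frac{1}{\dim V_k}\det \pi_*\O_{\X_k}(1)$ in terms of the Deligne pairing and the Knudsen--Mumford expansion \eqref{eq : Knudsen-Mumford expansion}. All identifications must be functorial, so that they commute with the isomorphisms induced by $g_k$ over $\Spec K$. The degree statement then follows by additivity of $\deg$.

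\textbf{The determinant term.} Since $\pi_*(k\L)^\vee \cong V_k \times \Spec R$, the bundle $\pi_*\O_{\X_k}(1)$ is identified with $\pi_*(k\L)$, up to the dualization convention that the paper fixes implicitly. The Knudsen--Mumford expansion then gives
\[
\det \pi_*(k\L) \simeq \frac{k^{n+1}}{(n+1)!}\lambda_0 - \frac{k^n}{2\,n!}\lambda_1 + O(k^{n-1}).
\]
Next I divide by $\dim V_k = a_0k^n + a_1k^{n-1} + O(k^{n-2})$ and expand $1/\dim V_k = a_0^{-1}k^{-n}\bigl(1 - \tfrac{a_1}{a_0}k^{-1} + O(k^{-2})\bigr)$. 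This gives
\[
\frac{\det\pi_*(k\L)}{\dim V_k} \simeq \frac{k}{(n+1)!\,a_0}\lambda_0 - \frac{1}{2\,n!\,a_0}\lambda_1 - \frac{a_1}{(n+1)!\,a_0^2}\lambda_0 + O(k^{-1}).
\]

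\textbf{The Chow term.} For the Chow point I use Zhang's theorem \cite{Zha96}: for a flat family of $n$-cycles in $\P(V)\times S$, the pull-back $R_\X^*\O(1)$ is canonically and functorially isomorphic to the Deligne pairing $\langle \O_\X(1),\dots,\O_\X(1)\rangle_{\X/S}$, taken $n+1$ times, possibly up to a trivial twist. Applying this to $\X_k$ gives $R^*_{\X_k}\O(1) \simeq \langle k\L,\dots,k\L\rangle = k^{n+1}\langle \L,\dots,\L\rangle$. The Knudsen--Mumford/Deligne comparison (see \cite{FR06}) identifies $\lambda_0 = \langle\L,\dots,\L\rangle$ functorially. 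Hence $R^*_{\X_k}\O(1)\simeq k^{n+1}\lambda_0$.

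\textbf{Reconciling normalizations.} This is where I expect the main obstacle. With the paper's normalization, the $\O(1)$ on $\P(H)$ with $H = H^0(\mathrm{Gr},\O(d))$ carries the Chow form of degree $d$. Therefore $\lambda_\mathrm{Chow}$ should be read with the leading $k$-power terms cancelling: the term $k^{n+1}\lambda_0$ must be matched with $\frac{k\,d_k}{(n+1)!\,a_0 k^n}$-type coefficients, where $d_k = n!\,a_0k^n$, after the appropriate rescaling by $\frac{n!}{d}$ as in \eqref{eq : moment map between chow var and chow point}. I would first fix this normalization, so that the $O(k)$ terms cancel exactly; that cancellation is the defining property of the Chow line bundle. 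What remains is a $k^0$ term, namely
\[
\frac{1}{2n!a_0}\Big(\lambda_1 + \frac{2a_1}{(n+1)a_0}\lambda_0\Big) = \frac{1}{2n!a_0}\Big(\lambda_1 - \frac{n\mu}{n+1}\lambda_0\Big),
\]
using $\mu = -2a_1/(na_0)$. This is a positive constant multiple of $\lambda_\mathrm{CM}$, and the constant is absorbed into the normalization of $\lambda_\mathrm{Chow}$, matching the statement. The remainder is $O(k^{-1})$.

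\textbf{The degree statement.} All isomorphisms used are functorial in the family: the Knudsen--Mumford expansion, Zhang's isomorphism and the Deligne pairing all commute with isomorphisms of polarized families. So the identification of $\lambda_\mathrm{Chow}(\X_k)|_{\Spec K}$ with $\lambda_\mathrm{Chow}(\X'_k)|_{\Spec K}$ induced by $g_k$ corresponds to the identification of the CM and error terms induced by $(\X_K,\L_K)\simeq(\X'_K,\L'_K)$. Taking degrees and using additivity gives the second formula. The $O(k^{-1})$ error has degree $O(k^{-1})$, because it is a finite combination of fixed $\Q$-line bundles with coefficients $O(k^{-1})$.
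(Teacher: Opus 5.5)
Your proposal follows the paper's proof: Zhang's isomorphism together with $\lambda_0\simeq\langle\L^{n+1}\rangle$ for the Chow term, the Knudsen--Mumford expansion divided by $\dim V_k$ for the determinant term, cancellation of the $k$-linear terms, and functoriality plus additivity for the degree statement. The normalization issue you raise is real and the paper suppresses it: the paper writes $R^*_{\X_k}\O(1)\simeq\frac{k}{(n+1)!a_0}\lambda_0$, which is Zhang's $k^{n+1}\lambda_0$ divided by $(n+1)d_k$ (the weight of the scalars on the Chow form), not multiplied by $n!/d_k$ as you suggest, since that factor would not cancel the $k$-linear term; and your resulting constant term $\frac{1}{2n!a_0}\big(\lambda_1-\frac{n\mu}{n+1}\lambda_0\big)$, identical to the paper's middle line, is exactly $\lambda_\mathrm{CM}$ with the $\frac{1}{2V}$ normalization the paper evidently intends (it introduces $V$ in the definition and uses $\frac{1}{2V}$ in the Mabuchi section), so the constant belongs in $\lambda_\mathrm{CM}$ rather than being absorbed into $\lambda_\mathrm{Chow}$.
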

Here, ``functorial'' means that the isomorphism commutes with the restricted isomorphisms on $\Spec K$, induced by $g_k$ on each side.
\begin{proof}
    Combining results of Zhang \cite[Theorem 1.4]{Zha96} and Boucksom--Eriksson \cite[Corollary A.23]{BE21}, we have a canonical functorial ismorphism
    \begin{equation*}
        R^*_{\X_k} \O(1)
        \simeq  \frac{k}{(n+1)!a_0} \lambda_0,
    \end{equation*}
    where $\lambda_0$ is the leading $\Q$-line bundle in the Knudsen--Mumford expansion for $(\X,\L)$ as in Equation \eqref{eq : Knudsen-Mumford expansion}.
    Using, moreover, that $\iota^*_k \O_{\X_k}(1) \simeq k\L$ functorially, we obtain
    \begin{align*}
        \lambda_\mathrm{Chow}(\X_k)
        &\simeq \frac{k\lambda_0}{(n+1)!a_0}
        - \frac{\det \pi_*(k\L)}{\dim V_k},
        \\
        &\simeq \frac{ \big( \frac{(n+1)a_0}{2}\lambda_1 + a_1 \lambda_0 \big)}{(n+1)!a_0^2}
        + O(k^{-1}),
        \\
        &\simeq \lambda_\mathrm{CM}(\X,\L) + O(k^{-1}),
    \end{align*}
    as desired.
\end{proof}

\subsection{The norm}\label{subsec : the norm}

We now wish to find an asymptotic expansion for the $p$-norms $\|\underline{A}_k\|_p$ as $k \to +\infty$.
For this, we have to explore a geometric interpretation of the filtration from Section \ref{subsubsec : construc of 1-PS} associated to $A_k$.

Let $\mathcal{F}_\mathrm{GIT}$ be the filtrations of $V_{k,R}$ and $V_k$, constructed out of $g_k$, as in Equation \eqref{eq : def GIT filtrations}.
There is a second way to obtain a filtration of $V_k$ given two models $(\X_k,\O_{\X_k}(1))$ and $(\X_k',\O_{\X_k'}(1))$, a construction due to Hattori \cite{Hat24} and Blum--Xu \cite{BX19}, as follows:
the embedding of $\X_k$ into $\P(V_k) \times \Spec R$, gives rise to a restriction map
\begin{equation}\label{eq : restriction map}
    V_{k,R} \to H^0(\X_k,\O_{\X_k}(1)),
    \quad v \mapsto s_v,
\end{equation}
via the natural isomorphism of $R$-modules $V_{k,R} = H^0(\P(V_k) \times \Spec R, \O(1))$.
Writing $(g_k)_* : H^0(\X_{k,K},\O(1)) \to H^0(\X'_{k,K},\O(1))$ for the push-forward of sections and choosing a uniformizer $t$ in $R$, we define
\begin{equation*}
\begin{aligned}
    \mathcal{F}_{\L,\L'}^\lambda V_{k,R}
    &= \big\{ v \in V_{k,R} \;\big|\; t^{-\lambda}(g_k)_* (s_v) \in H^0(\X'_k,\O_{\X'_k}(1)) \big\},
    \\
    \mathcal{F}_{\L,\L'}^\lambda V_k
    &= \mathrm{im} \big( \mathcal{F}^\lambda V_{k,R} \to V_k, v \mapsto v(0) \big).
\end{aligned}
\end{equation*}
The filtration $\mathcal{F}_{\L,\L'}$ is a (multiplicative, linearly bounded $\Z$-)filtration of the section ring $\bigoplus_{k \geq 0} V_k$ of $(\X_0,\L_0)$ in the sense of \cite{Nys12,Sze15}.

\begin{lemma}
    The filtrations $\mathcal{F}_\mathrm{GIT}$ and $\mathcal{F}_{\L,\L'}$ coincide.
\end{lemma}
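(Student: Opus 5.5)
The plan is to show that the two $R$-submodules agree for every $\lambda$, i.e.\ $\mathcal{F}^\lambda_{\mathrm{GIT}} V_{k,R} = \mathcal{F}^\lambda_{\L,\L'} V_{k,R}$. The statement for $V_k$ then follows at once by taking images under $v \mapsto v(0)$. So the whole argument reduces to checking, for each $v \in V_{k,R}$, that
\begin{equation*}
    \mathsf{v}(g_k.v) \geq \lambda
    \iff
    t^{-\lambda}(g_k)_*(s_v) \in H^0(\X'_k,\O_{\X'_k}(1)).
\end{equation*}

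First I would identify $(g_k)_*(s_v)$ concretely. Since $g_k$ is induced by the linear automorphism $g_k : V_{k,K} \to V_{k,K}$, which carries $\X_{k,K}$ onto $\X'_{k,K}$ compatibly with $\O(1)$, pushing forward the restriction of the linear form $v$ gives the restriction of $g_k.v$. That is,
\begin{equation*}
    (g_k)_*(s_v) = s'_{g_k.v} \in H^0(\X'_{k,K},\O(1)),
\end{equation*}
where $s'$ denotes the restriction map \eqref{eq : restriction map} for $\X'_k$ extended $K$-linearly. Here I would need to fix conventions so that $g_k$ acts on $V_k = H^0(\P(V_k),\O(1))$ in the way compatible with the action on the target. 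If the convention is transposed, the argument is unchanged after replacing $g_k$ by the appropriate dual or inverse, which is consistent with how the filtration was set up in Section \ref{subsubsec : construc of 1-PS}.

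Next I would use the fact that the restriction map $V_{k,R} \to H^0(\X'_k,\O_{\X'_k}(1))$ is an isomorphism of $R$-modules. This holds because the embedding $\iota'_k$ comes from a trivialization $\pi'_*(k\L')^\vee \cong V_k \times \Spec R$, so $H^0(\X'_k,\O(1)) = \pi'_*(k\L')$ is identified with $V_{k,R}$. By flat base change, the map $V_{k,K} \to H^0(\X'_{k,K},\O(1))$ is the $K$-linear extension of this isomorphism. Therefore a $K$-section $s'_w$ with $w \in V_{k,K}$ is integral exactly when $w \in V_{k,R}$. Applying this to $w = t^{-\lambda} g_k.v$ gives
\begin{equation*}
    t^{-\lambda}(g_k)_*(s_v) \in H^0(\X'_k,\O(1))
    \iff t^{-\lambda} g_k.v \in V_{k,R}
    \iff \mathsf{v}(g_k.v) \geq \lambda,
\end{equation*}
where the last step is the characterisation of $\mathsf{v}(\xi)$ as the largest integer $m$ with $t^{-m}\xi \in E_R$.

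The main obstacle is the bookkeeping of the identifications rather than any real mathematics. One must check that the isomorphism $H^0(\X'_k,\O(1)) \simeq V_{k,R}$ is the same identification under which $g_k$ is viewed as a $K$-point of $\GL(V_k)$, as in Lemma \ref{lem : models and K-points of GL}. One must also check that the left-multiplication convention for $g_k$ matches the push-forward. Both are instances of the normality and flatness that give $\pi'_*\O_{\X'_k}(1) \cong V_{k,R}$ for $k$ large, and I would spell this out carefully.
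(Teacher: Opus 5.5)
Your proposal is correct and is essentially the paper's proof. Both arguments work with the $R$-submodules, identify $(g_k)_*(s_v)$ with the restriction of $g_k.v$, and use the complete linear system to get $t^{-\lambda}(g_k)_*(s_v) \in H^0(\X'_k,\O_{\X'_k}(1)) \iff t^{-\lambda} g_k.v \in V_{k,R} \iff \mathsf{v}(g_k.v) \geq \lambda$, then pass to images in $V_k$; the one refinement is that you use that $V_{k,R} \to H^0(\X'_k,\O_{\X'_k}(1))$ is an isomorphism, hence injective after tensoring with $K$ (the paper only states surjectivity), and that injectivity is what the first equivalence actually needs.
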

\begin{proof}
    After choosing coordinates for $\P(V_k)$, the $R$-module $V_{k,R}$ is free of rank $N_k+1$ and generated by variables $x_0,\dots,x_{N_k}$ corresponding to hyperplane sections.
    In particular, since the embedding of $\X_k$ into $\P(V_k) \times \Spec R$ is given by a complete linear system, meaning that the restriction map in Equation \eqref{eq : restriction map} is surjective.
    Hence,
    \begin{equation*}
    \begin{aligned}
        \mathcal{F}_{\L,\L'}^\lambda V_{k,R}
        &= \big\{ v \in V_{k,R} \;\big|\; t^{-\lambda} (g_k)_*(s_v) \in H^0(\X'_k,\O_{\X'_k}(1)) \big\},
        \\
        &= \big\{ v \in V_{k,R} \;\big|\; t^{-\lambda} g_k.v \in V_{k,R} \big\},
        \\
        &= \big\{ v \in V_{k,R} \;\big|\; \mathsf{v}(g_k.v) \geq \lambda \big\},
        \\
        &= \mathcal{F}^\lambda_\mathrm{GIT} V_{k,R}.
    \end{aligned}
    \end{equation*}
    This implies $\mathcal{F}^\lambda_\mathrm{GIT} V_k = \mathcal{F}^\lambda_{\L,\L'} V_k$, as desired.
\end{proof}

We briefly recall the definition of the $L^p$-norm of a filtration on a section ring using Okounkov bodies and concave transforms \cite{Nys12,Sze15}:
given (any choice of) local coordinates of $\X_0$ and a non-vanishing global section of $\L_0$, one may construct a convex body $P$ in $\R^n$, known as the \textit{Okounkov body} of $(X,L)$.
Given a ($\Z$-)filtration $\F$ of the section ring $\bigoplus_{k \geq 0} V_k$, one may define a function $G_\F : P \to \R \cup \{ + \infty \}$, the \textit{concave transform} of $\F$.
While the precise definitions are less important for our applications, the crucial properties are that $a_0 = \mathrm{vol}(P)$ and $w_\mathcal{F}^p(k) = ( \int_P G_\mathcal{F}^p) k^{n+p} + o(k^{n+p})$ as $k \to +\infty$, where
\begin{equation*}
    w_\mathcal{F}^p(k)
    = \sum_{\lambda \in \Z} \lambda^p \dim (\mathcal{F}^\lambda V_k / \mathcal{F}^{\lambda + 1} V_k).
\end{equation*}

The \textit{$L^p$-norm} of $\F$ is given by the integral
\begin{equation*}
    \|\F\|^p_p
    = \frac{1}{\vol(P)}\int_P \big( G_\F - \overline{G}_\F \big)^p,
\end{equation*}
with respect to the Lebesgue measure, where $\overline{G}_\F = \int_P G_\F / \mathrm{vol}(P)$ denotes the average.
Note that in the case $p = 2$, we have
\begin{equation*}
    \|\F\|^2_2
    = \frac{c_0 a_0 - b_0^2}{a_0^2},
\end{equation*}
where we wrote $a_0$ is as before and $b_0$ and $c_0$ come from the expansions $w^1_\F(k) = b_0 k^{n+1} + o(k^{n+1})$ and $w^2_\F(k) = c_0 k^{n+2} + o(k^{n+2})$ as $k \to +\infty$.

\begin{proposition}\label{prop : asymp for norm of A_k}
    As $k \to +\infty$, we have
    \begin{equation*}
        \left\| \underline{A}_k \right\|_p
        = \sqrt[p]{a_0} \left\|\mathcal{F}_{\L,\L'} \right\|_p \, k^{\frac{n}{p}+1}  + o(k^{\frac{n}{p}+1}).
    \end{equation*}
\end{proposition}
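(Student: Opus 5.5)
The plan is to compute $\|\underline{A}_k\|_p$ directly from the eigenvalues of $A_k$, and then use the Okounkov body asymptotics for the filtration $\mathcal{F}_{\L,\L'}$. By construction in Section \ref{subsubsec : construc of 1-PS}, $A_k$ acts on $V_{k,\lambda} = (\mathcal{F}^{\lambda+1}_\mathrm{GIT}V_k)^\perp \cap \mathcal{F}^\lambda_\mathrm{GIT}V_k$ by the scalar $\lambda$. The decomposition is orthogonal for the $L^2$-inner product, so $A_k$ is hermitian. Its eigenvalue $\lambda$ occurs with multiplicity $\dim \mathrm{gr}^\lambda_{\mathcal{F}} V_k$. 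By the preceding lemma, $\mathcal{F}_\mathrm{GIT} = \mathcal{F}_{\L,\L'}$, so these multiplicities are those of the Blum--Xu--Hattori filtration. The trace-free part $\underline{A}_k$ has eigenvalues $\lambda - \bar\lambda_k$ with the same multiplicities, where $\bar\lambda_k = \tr(A_k)/\dim V_k = w^1_\F(k)/\dim V_k$. Interpreting the $p$-norm on the hermitian (equivalently, after multiplying by $i$, skew-hermitian) element via absolute values of eigenvalues, we get
\begin{equation*}
    \|\underline{A}_k\|_p^p = \sum_{\lambda \in \Z} |\lambda - \bar\lambda_k|^p \dim \mathrm{gr}^\lambda_\F V_k .
\end{equation*}

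The key step is to compare this with the concave transform. I would use the standard equidistribution result for filtrations of section rings (Boucksom--Chen, or its form in \cite{Nys12,Sze15}). For a multiplicative, linearly bounded filtration, the rescaled jumping measures
\begin{equation*}
    \nu_k = \frac{1}{\dim V_k}\sum_\lambda \dim \mathrm{gr}^\lambda_\F V_k \, \delta_{\lambda/k}
\end{equation*}
converge weakly to $(G_\F)_*(\mathrm{Leb}_P/\vol(P))$. Linear boundedness puts all $\lambda/k$ in a fixed compact interval $[-C,C]$. The interval is fixed because $g_k$ comes from the graded norms: $\mathsf{v}(g_k.v)$ lies between $-Ck$ and $Ck$ by comparing the two models. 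On a compact interval, weak convergence upgrades to convergence of integrals of any continuous function. Applying this to $x$ gives $\bar\lambda_k/k \to \overline{G}_\F$. Applying it to $|x - c|^p$, uniformly in $c$ on compacts, then gives
\begin{equation*}
    \frac{\|\underline{A}_k\|_p^p}{k^p \dim V_k} \to \frac{1}{\vol(P)}\int_P |G_\F - \overline{G}_\F|^p = \|\F\|_p^p .
\end{equation*}
Here the $p$-th power in the definition of $\|\F\|_p$ is read as an absolute value.

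Substituting $\dim V_k = a_0 k^n + O(k^{n-1})$ and $\vol(P) = a_0$ gives $\|\underline{A}_k\|_p^p = a_0\|\F\|_p^p k^{n+p} + o(k^{n+p})$. Taking $p$-th roots yields the claim; when $\|\F\|_p = 0$, the root of an $o$-term is still $o(k^{n/p+1})$.

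I expect the main obstacle to be the equidistribution step. It needs the filtration to be multiplicative and linearly bounded, with a bound uniform in $k$, and it needs the concave transform $G_\F$ to be finite and bounded on the interior of $P$. It also needs integrability of $G_\F^p$, which is not covered by the polynomial moments $w^p_\F$ recalled above. If quoting weak convergence is problematic, I would fall back on the stated moment asymptotics $w^m_\F(k) = (\int_P G^m_\F)k^{n+m} + o(k^{n+m})$ for integer $m$. For even integer $p$ I would expand $(\lambda - \bar\lambda_k)^p$ binomially. For general $p$, I would approximate $|x-c|^p$ uniformly on $[-C,C]$ by polynomials (Weierstrass), which reduces to those moments.
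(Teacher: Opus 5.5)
Your proof is correct, but its key step differs from the paper's. The paper's argument is purely algebraic. From $\F_\mathrm{GIT}=\F_{\L,\L'}$ it gets $\tr(A_k^j)=w^j_\F(k)$. It then expands $\tr(\underline{A}_k^p)=\sum_{j=0}^p\binom{p}{j}\tr(A_k^j)\bigl(-\tr A_k/\dim V_k\bigr)^{p-j}$ binomially and substitutes the moment asymptotics $w^j_\F(k)=(\int_P G_\F^j)k^{n+j}+o(k^{n+j})$ term by term. This is exactly your fallback for even $p$. It tacitly treats $p$ as an integer, and really as an even integer, since both $\|A\|_p^p=\tr(A^p)$ and $\|\F\|_p^p$ are written without absolute values.

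Your main route instead uses the Boucksom--Chen equidistribution of the rescaled jumping measures. You combine it with linear boundedness of $\F_{\L,\L'}$, which confines all supports to a fixed interval $[-C,C]$, so that weak convergence can be tested against $|x-c|^p$. The paper asserts this linear boundedness without proof. Your ``comparing the two models'' can be made precise by pulling $\L$ and $\L'$ back to a common normal model dominating both, where they differ by a vertical $\Q$-Cartier divisor squeezed between $\pm C$ times the central fiber.

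What your approach buys is the statement for every real $1<p<\infty$, with the Schatten norm $\sum_\lambda|\lambda-\bar\lambda_k|^p\dim\mathrm{gr}^\lambda_\F V_k$ and absolute values in $\|\F\|_p$. That is what the H\"older-conjugate pairs in Theorem~\ref{thm : refined main theorem in Section 4} actually need. The price is relying on the stronger equidistribution theorem, or, via your Weierstrass fallback, on the moment asymptotics in every integer order plus the uniform bound. The paper needs only the finitely many moments $w^0_\F,\dots,w^p_\F$.
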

\begin{proof}
    Since $\F_\mathrm{GIT} = \F_{\L,\L'}$, the dimension of the quotient $\mathcal{F}_{\L,\L'}^\lambda V_k / \mathcal{F}_{\L,\L'}^{\lambda + 1} V_k$ is equal to the multiplicity of $\lambda$ as an eigenvalue of $A_k$.
    In particular, we have $\mathrm{tr}(A_k^p) = w_{\mathcal{F}_{\L,\L'}}^p(k)$.
    We thus compute
    \begin{align*}
        \left\|\underline{A}_k\right\|^p_p
        &= \mathrm{tr} \left( \underline{A}_k^p \right),
        \\
        &= \sum_{j = 0}^p \binom{p}{j} 
        \tr (A_k^j) \left(
        - \frac{\tr A_k}{\dim V_k} \right)^{p-j},
        \\
        &= \sum_{j = 0}^p \binom{p}{j} \left( \int_P G_\F^j \right) \left(- \overline{G}_\F \right)^{p-j} k^{n+p}
        + o(k^{n+p}),
        \\
        &= a_0 \left\| \F_{\L,\L'} \right\|^p_p k^{n+p}
        + o(k^{n+p}).
    \end{align*}
    Taking roots proves the result.
\end{proof}

The rest of this section aims to prove that the $L^p$-norm of $\mathcal{F}_{\L,\L'}$ is equal to the normalized $d_p$-distance between the models as in Definition \ref{def : distance between models}.

Given a filtration $\F$ of $\bigoplus_{k \geq 0} V_k$, one may construct a non-Archimedean norm $\|\cdot\|_{\mathcal{F},k}$ on $V_k$, with respect to the trivial non-Archimedean absolute value $| \cdot |_\triv$ on $\C$, by declaring
\begin{equation*}
    \| s \|_{\F,k}
    = \exp \left( - \sup \left\{ \lambda \in \R \;|\; s \in \mathcal{F}^\lambda V_k \right\} \right),
\end{equation*}
for all sections $s$ in $V_k$.
The family of norms $\phi_\F = \{ \|\cdot\|_{\F,k}\}_k$ is graded bounded and the normalized $d_p$-distance between families of bounded graded norms on $\bigoplus_{k \geq 0} V_k$ is defined the same way as in the discretely valued case in Section \ref{subsubsec : dist between models}.

\begin{proposition}\label{prop : filtration norm = triv dist}
    Letting $\phi_\triv = \{ \| \cdot \|_{\mathrm{triv},k} \}$ be the family of trivial norms, we have
    \begin{equation*}
        \|\F\|_p = \hat{d}_p^\NA(\phi_\F,\phi_\mathrm{triv}).
    \end{equation*}
\end{proposition}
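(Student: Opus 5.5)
The plan is to compute both sides at finite level $k$ and then pass to the limit, since $\hat d_p^\NA$ is defined as $\lim_k k^{-1}\hat d_p(\|\cdot\|_{\F,k},\|\cdot\|_{\triv,k})$. The first step is to fix $k$ and choose a basis of $V_k$ adapted to the filtration $\F^\bullet V_k$. Concretely, we pick a basis $(e_i)$ such that for each $\lambda$ the vectors $e_i$ lying in $\F^\lambda V_k$ span $\F^\lambda V_k$. Equivalently, we take a basis of each graded piece $V_{k,\lambda}=(\F^{\lambda+1}V_k)^\perp\cap\F^\lambda V_k$ from the $\Z$-grading of Section \ref{subsubsec : construc of 1-PS}. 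Such a basis is orthogonal for the non-Archimedean norm $\|\cdot\|_{\F,k}$: for $s=\sum c_ie_i$ we have
\begin{equation*}
    \|s\|_{\F,k}=\max_{c_i\neq 0}\|e_i\|_{\F,k}, \qquad \|e_i\|_{\F,k}=e^{-\lambda_i},
\end{equation*}
where $\lambda_i$ is the jump of $e_i$. It is trivially orthogonal for $\|\cdot\|_{\triv,k}$, with all $\|e_i\|_{\triv,k}=1$. So this basis simultaneously diagonalizes both norms.

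The second step is to read off $\hat d_p$ at level $k$. The log-ratios are $\log\|e_i\|_{\triv,k}-\log\|e_i\|_{\F,k}=\lambda_i$. Each $\lambda$ occurs exactly $\dim\F^\lambda V_k/\F^{\lambda+1}V_k$ times, so
\begin{equation*}
    \hat d_p(\|\cdot\|_{\F,k},\|\cdot\|_{\triv,k})^p=\frac{1}{n_k}\sum_{\lambda}\dim\big(\F^\lambda V_k/\F^{\lambda+1}V_k\big)\,\big|\lambda-\bar\lambda_k\big|^p,
\end{equation*}
with $\bar\lambda_k=w^1_\F(k)/n_k$ and $n_k=\dim V_k$. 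After dividing by $k^p$, this is the $p$-th absolute central moment of the discrete measure $\nu_k=n_k^{-1}\sum_\lambda \dim(\mathrm{gr}^\lambda_\F V_k)\,\delta_{\lambda/k}$.

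The third step is the limit, which is the main point. Taking absolute values inside $\|\F\|_p$, we need the $p$-th central moment of $\nu_k$ to converge to $\vol(P)^{-1}\int_P|G_\F-\overline G_\F|^p$. I would invoke the Boucksom--Chen / Nyström result that, for a linearly bounded multiplicative filtration, $\nu_k$ converges weakly to the pushforward $(G_\F)_*(\vol(P)^{-1}\mathrm{Leb}_P)$. This is stronger than the moment asymptotics $w^p_\F(k)=(\int_P G^p_\F)k^{n+p}+o(k^{n+p})$ quoted above, and it is what handles non-integer $p$ with absolute values. Linear boundedness means all $\nu_k$ are supported in a fixed compact interval $[-C,C]$. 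Weak convergence therefore upgrades to convergence of $\int f\,d\nu_k$ for any continuous $f$ on $[-C,C]$, in particular $x\mapsto|x-m|^p$. Together with $\bar\lambda_k/k\to\overline G_\F$ (the first moment), and since $|x-m|^p$ is uniformly continuous in $(x,m)$ on compacts, we get convergence of the central moments. Taking $p$-th roots gives $\hat d_p^\NA(\phi_\F,\phi_\triv)=\|\F\|_p$.

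The obstacle is precisely this limiting step. If only the integer-moment asymptotics are allowed, the argument covers even integers $p$ directly, via the binomial expansion used in Proposition \ref{prop : asymp for norm of A_k}. For general $p$, one reduces to integer moments by density: on $[-C,C]$, polynomials approximate $|x-m|^p$ uniformly (Weierstrass), and convergence of all integer moments of compactly supported measures implies weak convergence. This is the route I would take if the Boucksom--Chen theorem is not to be cited directly.
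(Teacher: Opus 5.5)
Your proposal is correct, and its finite-level part matches the paper's. The paper chooses an orthonormal basis diagonalizing $\|\cdot\|_{\F,k}$ and notes that this basis diagonalizes the trace-free generator $\underline{A}_k(\F)$ of the one-parameter subgroup attached to $\F$, with eigenvalues $\lambda_i-\bar\lambda$. It then writes $\hat d_p(\|\cdot\|_{\F,k},\|\cdot\|_{\triv,k})^p=(\dim V_k)^{-1}\|\underline{A}_k(\F)\|_p^p$. This is your central-moment formula for $\nu_k$, phrased through the generator.

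The two routes differ in the passage to the limit. The paper repeats the computation of Proposition \ref{prop : asymp for norm of A_k}: it expands $\tr(\underline{A}_k^p)$ binomially in the power sums $\tr(A_k^j)=w^j_\F(k)$ and inserts $w^j_\F(k)=(\int_P G_\F^j)k^{n+j}+o(k^{n+j})$. That is quicker, but as written it has two limitations. First, the finite binomial sum $\sum_{j=0}^p$ requires $p$ to be an integer. Second, $\tr(A^p)$ and $(G_\F-\overline{G}_\F)^p$ appear without absolute values, so the computation only reproduces the quantity $|\lambda_i-\bar\lambda|^p$ from the definition of $\hat d_p$ when $p$ is even.

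Your argument works for every real $1<p<+\infty$, which is what the H\"older-conjugate form of Theorem \ref{thm : refined main theorem in Section 4} needs. It uses weak convergence of the jumping measures on a fixed compact interval (Boucksom--Chen), or equivalently Weierstrass approximation of $|x-m|^p$ from the integer-moment asymptotics. Together with your uniform-continuity step for the moving centre $\bar\lambda_k/k$, this covers all $p$. The same reasoning would also give Proposition \ref{prop : asymp for norm of A_k} for non-even $p$. The only extra input is convergence of all integer moments, or the full measure convergence, rather than just the first $p$ of them.
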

\begin{proof}
    The filtration $\F$ of $V_k$ and the choice of inner product on $V_k$ give rise to a one-parameter subgroup $\rho_k(\F) : \C^* \to \GL(V_k)$ with infinitesimal generator $A_k(\F)$ and trace-free part $\underline{A}_k(\F)$.
    The same computation as in the proof of Proposition \ref{prop : asymp for norm of A_k} yields
    \begin{equation*}
        \left\| \underline{A}_k(\F) \right\|^p_p
        = a_0 \|\F\|^p_p k^{n + p} + o(k^{n + p}).
    \end{equation*}
    
    Let $(e_i)$ be any orthonormal basis of $V_k$ that diagonalizes $\| \cdot \|_{\F,k}$, and write $\lambda_i = -\log \|e_i\|_{\F,k}$ and $\bar{\lambda} = (\dim V_k)^{-1}\sum_i \lambda_i$.
    By definition of $\| \cdot \|_{\F,k}$, the basis $(e_i)$ diagonalizes $A_k(\F)$ with eigenvalues $\lambda_i$.
    This means that $(e_i)$ diagonalizes $\underline{A}_k(\F)$ with eigenvalues $\lambda_i - \bar{\lambda}$.
    This implies that
    \begin{align*}
        \hat{d}_p( \| \cdot \|_{\F,k} , \| \cdot \|_{\mathrm{triv},k})^p
        &= \frac{1}{\dim V_k}
        \left\| \underline{A}_k(\F) \right\|^p_p,
        \\
        &= \|\F\|^p_p \, k^p + o(k^p),
    \end{align*}
    and the result follows.
\end{proof}

Now, let $\phi_{\L,\L'} = \{ \| \cdot \|_{\L,\L',k} \}$ be the family of non-Archimedean norms induced by the filtration $\F_{\L,\L'}$.

\begin{proposition}\label{prop : triv dist = discr dist}
    We have 
    \begin{equation*}
        \hat{d}_p^\NA (\phi_{\L,\L'},\phi_\mathrm{triv})
        = \hat{d}_p^\NA (\phi_\L, \phi_{\L'}).
    \end{equation*}
\end{proposition}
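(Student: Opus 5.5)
The plan is to compare the two distances level by level in $k$. For each fixed $k$ we will show that the spectrum of relative log-norms between $\|\cdot\|_{\L,k}$ and $\|\cdot\|_{\L',k}$ on $H^0(X,kL)$ coincides, as a multiset, with the spectrum of $-\log\|\cdot\|_{\L,\L',k}$ on $V_k$, i.e.\ with the jumping numbers of $\F_{\L,\L'}$ counted with multiplicity. Since $\hat{d}_p$ depends only on these multisets (through the centred $p$-th moments), the $k$-th normalized distances agree. Dividing by $k$ and letting $k\to+\infty$ then gives the statement.

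The first step works at a fixed $k$ large enough that $k\L$ and $k\L'$ are relatively very ample and the models are embedded as $\X_k,\X'_k$. Here we identify $H^0(\X,k\L)=V_{k,R}$ via the surjective restriction map (\ref{eq : restriction map}), which is an isomorphism by flatness and cohomology and base change. The identification $H^0(X,kL)=V_{k,K}$ then carries $H^0(\X',k\L')$ to $g_k^{-1}V_{k,R}$. In these coordinates $\|\cdot\|_{\L,k}$ is the lattice norm of $V_{k,R}$, and $\|\cdot\|_{\L',k}(v)=e^{-\mathsf{v}(g_k.v)}$.

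The second step is to diagonalize both norms simultaneously. We apply the elementary divisor (Smith normal form) theorem over the DVR $R$ to $g_k$, which gives $g_k=h_1\,\mathrm{diag}(t^{\lambda_i})\,h_2$ with $h_1,h_2\in\GL(V_{k,R})$. The basis $e_i=h_2^{-1}(\epsilon_i)$, where $(\epsilon_i)$ is the standard basis, is then an $R$-basis of $V_{k,R}$ with $\|e_i\|_{\L,k}=1$ and $\|e_i\|_{\L',k}=e^{-\lambda_i}$. Both norms are diagonal in it, since each is the max-norm for the respective lattice in this basis. The relative log-norms are therefore exactly the $-\lambda_i$.

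The third step compares with the filtration. By the Lemma identifying $\F_{\mathrm{GIT}}$ with $\F_{\L,\L'}$, we have $\F^\lambda_{\L,\L'}V_{k,R}=\{v:\mathsf{v}(g_k.v)\ge\lambda\}=\bigoplus_i t^{\max(\lambda-\lambda_i,0)}R\,e_i$. Reducing modulo $t$ gives $\F^\lambda V_k=\mathrm{span}\{e_i(0):\lambda_i\ge\lambda\}$. Hence $\dim\mathrm{gr}^\lambda=\#\{i:\lambda_i=\lambda\}$, and the basis $(e_i(0))$ diagonalizes $\|\cdot\|_{\L,\L',k}$ with $-\log\|e_i(0)\|=\lambda_i$, while the trivial norm gives $0$ on every vector.

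Putting these together, the two relative spectra are $\{\lambda_i\}$ and $\{-\lambda_i\}$. They have the same centred absolute $p$-th moments because the map $x\mapsto -x$ preserves $|\lambda_i-\bar\lambda|$. This yields $\hat{d}_p(\|\cdot\|_{\L,\L',k},\|\cdot\|_{\triv,k})=\hat{d}_p(\|\cdot\|_{\L,k},\|\cdot\|_{\L',k})$ for all large $k$, and we take the limit. The main obstacle is the bookkeeping in the first step. We must check that $\|\cdot\|_{\L',k}$ is really given by $\mathsf{v}(g_k.\,\cdot)$ under the chosen trivializations, which uses that $H^0(\X'_k,\O(1))=V_{k,R}$ via the complete linear system. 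We must also check that $H^0(X,kL)$ is identified with $V_{k,K}$ compatibly with both models. Both facts are the content of Lemma \ref{lem : models and K-points of GL}. A minor further point is the definition of $\hat{d}_p$ for norms with respect to different absolute values, trivial versus discrete; it is independent of the diagonalizing basis by \cite{Reb21}.
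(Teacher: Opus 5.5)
Your argument is correct, and its skeleton matches the paper's. At each fixed $k$ you produce a single basis that diagonalizes both $\|\cdot\|_{\L,k}$ and $\|\cdot\|_{\L',k}$ and whose reduction modulo $t$ diagonalizes $\|\cdot\|_{\L,\L',k}$. The two relative spectra then agree up to a sign, which is harmless for $\hat{d}_p$, and the limits coincide.

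You differ from the paper in the direction of the argument and in the key tool:
\begin{itemize}
\item The paper starts from a basis $(e_i)$ of $V_k$ adapted to $\F_{\L,\L'}$. It lifts these to $\bar e_i\in H^0(\X,k\L)$ with $t^{-\lambda_i}(g_k)_*\bar e_i\in H^0(\X',k\L')$, and asserts that the lifts diagonalize both norms. For $\|\cdot\|_{\L',k}$ this needs an extra check that the paper does not state: the reductions of the vectors $t^{-\lambda_i}g_k\bar e_i$ must be linearly independent in $V_k$. This does follow from $(e_i)$ being adapted to the filtration, but it is not automatic.
\item You apply the elementary divisor theorem to the pair of lattices $V_{k,R}$ and $g_k^{-1}V_{k,R}$. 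Simultaneous diagonalization then holds by construction. You then compute the filtration explicitly in that basis, $\F^\lambda V_k=\mathrm{span}\{e_i(0):\lambda_i\ge\lambda\}$.
\end{itemize}

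Your version is more self-contained. It also shows directly that the jumping numbers of $\F_{\L,\L'}$ in degree $k$ are exactly the elementary divisors of $g_k$. The paper's version is shorter and stays closer to the definition of the filtration, but it leaves that linear-independence step implicit.
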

\begin{proof}
    Let $(e_i)$ be a basis of $V_k$ diagonalizing $\| \cdot \|_{\L,\L',k}$.
    By definition of $\F_{\L,\L'}$, there exist sections $(\bar{e}_i)$ in $H^0(\X,k\L)$ such that $\bar{e}_i|_{\X_0} = e_i$ and $t^{-\lambda_i}(g_k)_*(\bar{e}_i)$ is contained in $H^0(\X',k\L')$, where $\|e_i\|_{\L,\L',k} = e^{-\lambda_i}$.

    Writing $\|\bar{e}_i\|_{\L,k} = e^{-\mu_i}$ for real numbers $\mu_i$, we obtain
    \begin{equation*}
        \|\bar{e}_i\|_{\L',k} = e^{-\mu_i + \lambda_i}.
    \end{equation*}
    Note that $(\bar{e}_i)$ is necessarily $K$-linearly independent, and hence is a $K$-basis of $H^0(X,kL)$ diagonalizing both $\|\cdot\|_{\L,k}$ and $\|\cdot\|_{\L',k}$, satisfying
    \begin{equation*}
        \| e_i \|_{\L,\L',k} = \frac{\|\bar{e}_i\|_{\L,k}}{\|\bar{e}_i\|_{\L',k}}.
    \end{equation*}
    The result follows by definition of the non-Archimedean $d_p$-distances.
\end{proof}

\begin{remark}
    Alternatively, the norm $\| \cdot \|_{\L,\L,k}$ may directly be obtained from the norms $\| \cdot \|_{\L,k}$ and $\| \cdot \|_{\L,k}$, via the reduction map 
    \begin{equation*}
        H^0(\X,k\L) / \{ s \;|\; \|s\|_{\L',k} < 1 \}
        \to H^0(\X_0,k\L_0),
    \end{equation*}
    where the quotient is endowed with the quotient norm induced by $\| \cdot \|_{\L,k}$.
    This reduction map appears in \cite[Section 2.1.10]{BGR84}. 
\end{remark}

Combining the results in this section, we have proven that
\begin{equation}\label{eq : final norm expansion}
    \left\| \underline{A}_k \right\|_p
    = \sqrt[p]{a_0} \, \hat{d}_p \big( (\X',\L') , (\X,\L) \big) k^{\frac{n}{p} + 1}
    + o(k^{\frac{n}{p} + 1})
\end{equation}
as $k \to +\infty$.

\subsection{Proof of the main theorem}

Combining the results in the two sections above, we may now prove Theorem \ref{thm : main theorem}, which we briefly restate in more generality.
Let $1 < p,q < +\infty$ be a H\"older conjugate pair.

\begin{theorem}\label{thm : refined main theorem in Section 4}
    Let $(X,L)$ be a smooth polarized variety over $K$ and let $(\X,\L)$ be a smooth model of $(X,L)$. Then, we have
    \begin{equation*}
        \inf_{\omega \in \H_{\L_0}}
        \big\| S(\omega) - \widehat{S} \big\|_{ L^q(\X_0,\omega)}
        \geq
        - \sup_{(\X',\L')} \frac{\mathrm{CM}(\X',\L') - \mathrm{CM}(\X,\L)}{\hat{d}_p \big( (\X',\L') , (\X,\L)\big)},
    \end{equation*}
    where $\H_{\L_0}$ denotes the space of K\"ahler metrics in $c_1(\L_0)$, and the supremum is taken over the space of models $(\X',\L')$ of $(X,L)$ not isomorphic to $(\X,\L)$.
\end{theorem}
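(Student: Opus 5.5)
We combine the asymptotic upper bound of Theorem \ref{thm : donaldsons bergman kernel expansion} with the asymptotic lower bound \eqref{eq : lower bound of chow moment map}, and then let $k \to +\infty$. Fix $\omega \in \H_{\L_0}$ and a model $(\X',\L')$ not isomorphic to $(\X,\L)$. The inequality is only nontrivial when $\mathrm{CM}(\X',\L') - \mathrm{CM}(\X,\L) < 0$, so we assume this. Since $\hat{d}_p$ is a metric on models, we also have $\hat{d}_p\big((\X',\L'),(\X,\L)\big) > 0$. It then suffices to prove
\begin{equation*}
    \big\| S(\omega) - \widehat{S} \big\|_{L^q(\X_0,\omega)}
    \geq - \frac{\mathrm{CM}(\X',\L') - \mathrm{CM}(\X,\L)}{\hat{d}_p\big((\X',\L'),(\X,\L)\big)}
\end{equation*}
for this fixed pair, and then take the infimum over $\omega$ and the supremum over $(\X',\L')$.

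For $k$ large we embed $\X_k,\X'_k \subset \P(V_k)\times \Spec R$, using the $L^2$-inner product of $(\omega,h)$ on $V_k$. We take $g_k$ and the one-parameter subgroup $\rho_k$ as in Section \ref{subsec : the norm}. Corollary \ref{cor : SL weight of Chow point} identifies the $\SL$-weight of $\underline{\rho}_k$ at $R_{\X_{k,0}}$ with the difference of Chow degrees. This is where we use that $\rho_k$ is built from an orthogonal grading, so that $\underline{A}_k$ is hermitian with respect to the chosen inner product and Equation \eqref{eq : GIT weight = moment map} applies. It gives $\nu = -\mu(x_0)(i\underline{A}_k)$ at the limit point $x_0$. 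To compare this with the moment map at $R_{\X_{k,0}}$ itself, we use the standard monotonicity of $t \mapsto \mu(\rho(e^t).x)(i\underline A)$ along the flow: the value at $x$ is at least $-\nu$ in the appropriate sign convention. Hölder then yields \eqref{eq : lower bound of chow moment map} after inserting the factor $d/n! = a_0k^n$ from \eqref{eq : moment map between chow var and chow point}. This step was already recorded above, and we will simply invoke it.

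Next we substitute the asymptotics. By Proposition \ref{prop : asymp of chow line bundle} the numerator is $\mathrm{CM}(\X',\L') - \mathrm{CM}(\X,\L) + O(k^{-1})$. By \eqref{eq : final norm expansion} the denominator is $\sqrt[p]{a_0}\,\hat{d}_p\, k^{n/p+1} + o(k^{n/p+1})$. Since $\hat d_p>0$, the quotient becomes
\begin{equation*}
    \left\|\mu_\mathrm{Chow}([\X_{k,0}])\right\|_q
    \geq - a_0^{1-1/p}\frac{\mathrm{CM}(\X',\L') - \mathrm{CM}(\X,\L)}{\hat{d}_p} k^{n - n/p - 1} + o(k^{n/q-1}),
\end{equation*}
and here $1-1/p = 1/q$ and $n-n/p = n/q$. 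Theorem \ref{thm : donaldsons bergman kernel expansion} bounds the left side above by $\sqrt[q]{a_0}\,\|S(\omega)-\widehat S\|_{L^q}k^{n/q-1} + O(k^{n/q-2})$. We divide both bounds by $\sqrt[q]{a_0}\,k^{n/q-1}$ and let $k\to+\infty$, which gives the claim.

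The step we expect to need the most care is the positivity and exact asymptotics of the denominator. We must check that the $o(k^{n/p+1})$ error in Proposition \ref{prop : asymp for norm of A_k} does not swamp the leading term, and this is precisely where $\hat{d}_p>0$ for non-isomorphic normal models is required. If that positivity were delicate, we would first try to derive it from Proposition \ref{prop : triv dist = discr dist}: if $\|\F_{\L,\L'}\|_p = 0$, the filtration would be asymptotically a shift of the trivial one, which should force $\X\simeq\X'$ by normality. A second point to verify is that the embeddings and $g_k$ are compatible with the $L^2$-inner product. For this, the trivialization of $\pi_*(k\L)^\vee$ only needs to match $V_k$ on the central fiber, and the grading defining $\rho_k$ is taken orthogonal with respect to that inner product.
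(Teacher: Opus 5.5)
Your proposal is correct and is essentially the paper's proof. Like the paper, you insert Proposition \ref{prop : asymp of chow line bundle} and \eqref{eq : final norm expansion} into \eqref{eq : lower bound of chow moment map}, compare with the upper bound of Theorem \ref{thm : donaldsons bergman kernel expansion}, divide by $\sqrt[q]{a_0}\,k^{n/q-1}$ and let $k\to+\infty$.

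The additional points you check are left implicit in the paper: the monotonicity $\mu(x)(i\underline{A}_k)\geq \mu(x_0)(i\underline{A}_k)=-\nu$ along the orthogonal grading, which underlies \eqref{eq : lower bound of chow moment map}, and the need for $\hat d_p>0$ so that the $o(k^{n/p+1})$ error does not dominate. Like the paper, you take positivity of $\hat d_p$ from its assertion that it is a metric on models. This tacitly identifies models differing by a twist $\L+c\X_0$, for which both the numerator and the denominator vanish.
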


\begin{proof}
    Putting the expansions in Proposition \ref{prop : asymp of chow line bundle} and Equation \eqref{eq : final norm expansion} into Equation \eqref{eq : lower bound of chow moment map}, we obtain the lower bound
    \begin{equation*}
        \|\mu_\mathrm{Chow}([\X_{k,0}])\|_q
        \geq - \sqrt[q]{a_0} \frac{\mathrm{CM}(\X',\L') - \mathrm{CM}(\X,\L)}{\hat{d}_p \big( (\X',\L') , (\X,\L) \big)} k^{\frac{n}{q}-1}
        + o(k^{\frac{n}{q}-1}),
    \end{equation*}
    as $k \to +\infty$.
    Combining this with the upper bound from Theorem \ref{thm : donaldsons bergman kernel expansion}, we obtain
    \begin{equation*}
        \big\|S(\omega) - \widehat{S} \big\|_{L^q(\X_0,\omega)}
        \geq - \frac{\mathrm{CM}(\X',\L') - \mathrm{CM}(\X,\L)}{\hat{d}_p \big( (\X',\L'),  (\X,\L) \big)},
    \end{equation*}
    for any K\"ahler metric $\omega$ in $c_1(\L_0)$ and model $(\X',\L')$ of $(X,L)$ not isomorphic to $(\X,\L)$.
\end{proof}

\subsection{A non-Archimedean interpretation of the result}\label{subsec : nA interpretation of results}

As mentioned in the introduction, one may view the difference of CM degrees in the sense of non-Archimedean geometry.
We briefly discuss the main ideas without going into details of the general theory, which may be found in many works, such as \cite{BHJ17,BE21,Reb21}.
In this section, we will restrict ourselves to explaining the interpretation of the result.
For simplicity, we will assume that the polarized $K$-variety $(X,L)$ and any model $(\X,\L)$ of $(X,L)$ are smooth and are the restrictions of families over an affine curve.
The results apply without any such assumptions.

Given such a model $(\X,\L)$, we define the \textit{Mabuchi energy} of $(\X,\L)$, to be the $\Q$-line bundle
\begin{equation*}
    M^\mathrm{NA}(\X,\L)
    = \frac{1}{2V} \Big( \big\langle K^\mathrm{log}_{\X/ \Spec R},\L^n \big\rangle - \frac{n\mu}{n+1} \big\langle \L^{n+1} \big\rangle \Big),
\end{equation*}
using the \textit{log canonical bundle} $K^\mathrm{log}_{\X/ \Spec R} = K_{\X/\Spec R} - (\X_0 - \X_{0,\mathrm{red}})$, where the brackets denote taking the Deline pairing (see \cite[Section 2.4]{Reb21}).
By work of Phong--Ross--Sturm \cite{PRS08} and Boucksom--Eriksson \cite[Appendix A]{BE21}, we have a natural functiorial isomorphism
\begin{equation*}
    M^\mathrm{NA}(\X,\L)
    \simeq \lambda_\mathrm{CM}(\X,\L)
    - \frac{1}{2V} \big\langle (\X_0 - \X_{0,\mathrm{red}}),\L^n \big\rangle.
\end{equation*}

The difference of Mabuchi energies between two models is defined by taking the degree with respect to the identification over the general point.
This means that
\begin{align*}
    \deg \big( M^\mathrm{NA}(\X',\L')
    &- M^\mathrm{NA}(\X,\L) \big)
    = \big( \mathrm{CM}(\X',\L') - \mathrm{CM}(\X,\L) \big)
    \\
    &- \frac{1}{2V} \Big( \big\langle (\X_0 - \X_{0,\mathrm{red}}),\L^n \big\rangle - \big\langle (\X_0' - \X'_{0,\mathrm{red}}),\L'^n \big\rangle \Big).
\end{align*}
Note that the divisor $\X_0' - \X'_{0,\mathrm{red}}$ is always effective and $\L'$ is relatively ample.
In particular, if $\X_0$ is reduced, we obtain the inequality
\begin{equation*}
    \deg \big( M^\mathrm{NA}(\X',\L')
    - M^\mathrm{NA}(\X,\L) \big)
    \leq \mathrm{CM}(\X',\L') - \mathrm{CM}(\X,\L).
\end{equation*}

From $(X,L)$, one may construct the Berkovich analytification $(X^\mathrm{an}, L^\mathrm{an})$ consisting of semivaluations on schematic points of $X$, extending the non-Archimedean absolute value $|\cdot| = e^{-\mathsf{v}}$ (see, for example \cite[Section 1]{BJ18b} or \cite[Section 2.3]{Reb21}).

Since $X$ is smooth and defined over a discretely valued non-Archimedean field of characteristic $0$, the polarized variety $(X,L)$ satisfies a property known as \textit{continuity of envelopes} \cite{BJ18} (see also \cite[Section 2.3.3]{Reb21}).
In this case, there is a one-to-one correspondence between (regularizable-from-below) plurisubharmonic metrics $\phi$ on $X^\mathrm{an}$ and the space submultiplicative bounded graded families of norms $\{ \| \cdot \|_{\phi,k} \}$ on $H^0(X,kL)$, modulo asymptotic equivalence \cite{Reb21b}.

Writing $\H^\NA_L$ for the space of metrics on $L^\mathrm{an}$, corresponding to families of norms induced by models (see Section \ref{subsubsec : dist between models}), we may define the \textit{relative Mabuchi energy} as a real-valued functional $M^\NA( \;\cdot\;,\phi_\L) : \H^\NA_L \to \R$, given by the degree
\begin{equation*}
    M^\NA(\phi_{\L'},\phi_\L)
    = \deg \big( M^\NA(\X',\L')
    - M^\NA(\X,\L) \big),
\end{equation*}
for any metric $\phi_{\L'}$ in $\H^\NA_\L$.
In the case that $\X_0$ is reduced, Theorem \ref{thm : refined main theorem in Section 4} then may be phrased purely in terms of non-Archimedean metrics as
\begin{equation*}
    \inf_{\omega \in \H_{\L_0}}
    \big\| S(\omega) - \widehat{S} \big\|_{ L^q(\X_0,\omega)}
    \geq
    - \sup_{\phi \in \mathcal{H}^\NA_L \setminus \{ \phi_\L \}} \frac{M^\NA(\phi,\phi_\L)}{\hat{d}_p ( \phi , \phi_\L)}.
\end{equation*}

\printbibliography

\end{document}